\newtheorem{theorem}{Theorem}[section]
\newtheorem{lemma}[theorem]{Lemma}
\newtheorem{remark}[theorem]{Remark}
\newtheorem{prop}[theorem]{Proposition}
\newtheorem{example}[theorem]{Example}
\newtheorem{corollary}[theorem]{Corollary}
\numberwithin{equation}{section}
\newcommand{\R}{{\mathbb R}}
\newcommand{\C}{{\mathbb C}}
\newcommand{\N}{{\mathbb N}}
\newcommand{\cL}{{\mathcal L}}
\newcommand{\cE}{{\mathcal E}}
\newcommand{\cB}{{\mathcal B}}
\newcommand{\cS}{{\mathcal S}}
\newcommand{\cU}{{\mathcal U}}
\newcommand{\be}{\beta}
\newcommand{\si}{\sigma}
\newcommand{\ov}{\overline}
\newcommand\Ker{\mathop{\rm Ker}}
\newcommand{à}{\`a}
\begin{document}

\title[Generalized Ces\`aro operators in  BK-sequence spaces]{Mean ergodic and related properties of generalized Ces\`aro operators in  BK-sequence spaces}

\author[A.\,A. Albanese, J. Bonet and W.\,J. Ricker]{Angela\,A. Albanese, Jos\'e Bonet and Werner\,J. Ricker}

\thanks{\textit{Mathematics Subject Classification 2020:}
Primary 46B42, 46B45, 47B37; Secondary 46B40,  47A10, 47A16, 47A35.}
\keywords{Generalized Cesàro operator, Compactness, Spectra, Power boundedness, Uniform Mean Ergodicity, Sequence space}

\address{ Angela A. Albanese\\
Dipartimento di Matematica ``E.De Giorgi''\\
Universit\`a del Salento- C.P.193\\
I-73100 Lecce, Italy}
\email{angela.albanese@unisalento.it}

\address{Jos\'e Bonet \\
Instituto Universitario de Matem\'{a}tica Pura y Aplicada
IUMPA \\
Universidad Politécnica de Valencia \\
E-46071 Valencia, Spain} \email{jbonet@mat.upv.es}

\address{Werner J.  Ricker \\
Math.-Geogr. Fakultät \\
 Katholische Universität
Eichst\"att-Ingol\-stadt \\
D-85072 Eichst\"att, Germany}
\email{werner.ricker@ku.de}

\begin{abstract} Recent results concerning the linear dynamics and mean ergodicity of compact operators in Banach spaces, together with additional new results, are employed to investigate various spectral properties of generalized Cesàro operators acting in large classes of classical BK-sequence spaces. Of particular interest is to determine the eigenvalues and the corresponding eigenvectors of such operators and to decide whether (or not) the operators are power bounded, mean ergodic and supercyclic.

\end{abstract}
\maketitle


\section{Introduction}

The (discrete) generalized Cesàro operators $C_t$, for $t\in [0,1]$, were first investigated by Rhaly, \cite{R1}. The action of $C_t$  from $\omega:=\C^{\N_0}$ into itself (with $\N_0:=\{0,1,2,\ldots\}$) is given by
\begin{equation}\label{Ces-op}
	C_tx:=\left(\frac{t^nx_0+t^{n-1}x_1+\ldots +x_n}{n+1}\right)_{n\in\N_0},\quad x=(x_n)_{n\in\N_0}\in\omega.
	\end{equation}
For $t=0$ note that $C_0$ is the diagonal operator
\begin{equation}\label{Dia-op}
	D_\varphi x:= \left(\frac{x_n}{n+1}\right)_{n\in\N_0}, \quad x=(x_n)_{n\in\N_0}\in\omega,
	\end{equation}
where $\varphi:=\left(\frac{1}{n+1}\right)_{n\in\N_0}$, and for $t=1$ that $C_1$ is the classical Cesàro averaging operator
\begin{equation}\label{Ces-1}
	C_1x:=\left(\frac{x_0+x_1+\ldots+x_n}{n+1}\right),\quad x=(x_n)_{n\in\N_0}\in\omega.
\end{equation}

The spectra of $C_1$ have been investigated in various Banach sequence spaces. For instance, we mention $\ell^p$ ($1<p<\infty$), \cite{BHS,CR1,G,Le}, $c_0$ \cite{Ak,Le,Re}, $c$ \cite{Le}, $\ell^\infty$ \cite{Le,P,Re}, the Bachelis spaces $N^p$ ($1<p<\infty$) \cite{CR2,CR4}, $bv$ and $bv_0$ \cite{O1,O2}, weighted $\ell^p$ spaces \cite{ABR1,ABR2}, the discrete Cesàro spaces $ces_p$, for $p\in\{0\}\cup (1,\infty)$, \cite{CR3}, and their dual spaces $d_s$ ($1<s<\infty$), \cite{BR1}. For the class of generalized Cesàro operators $C_t$, for $t\in (0,1)$, a study of their spectra and compactness properties (in $\ell^2$) go back to Rhaly, \cite{R1,R2}. A similar investigation occurs for $\ell^p$ ($1<p<\infty$) in \cite{YD} and for $c$ and $c_0$ in \cite{SEl-S,YM}. The paper \cite{SEl-S} also treats $C_t$ when it acts in $bv_0$, $bv$, $c$, $\ell^1$, $\ell^\infty$ and in the Hahn sequence space $h$. In the recent paper \cite{CR4} the setting for considering the compactness and  spectrum of the operators $C_t$ is a large class of Banach lattices contained in $\omega$, which includes all rearrangement invariant sequence spaces (over $\N_0$ for counting measure), and many others.

Our aim is to study the compactness, the spectrum and the linear dynamics of the generalized Cesàro operators $C_t$, for $t\in [0,1]$, when they act in various Banach sequence spaces $X\subseteq \omega$. Such spaces need not be Banach lattices. When $\omega$ is considered to be equipped with its coordinate-wise convergence topology, it is required that the natural inclusion  $X\subseteq \omega$ is continuous (i.e., $X$ is a \textit{BK-space}).
Given any $t\in [0,1]$ the operator $C_t\colon \omega\to\omega$, as given by \eqref{Ces-op}, is continuous, which is indicated by writing $C_t^\omega$. For all the BK-spaces $X$ that we consider, the generalized Cesàro operator $C_t^\omega$ maps $X$ into $X$, necessarily continuously (denoted by $C_t\in\cL(X)$) and it turns out that $C_t$ is a \textit{compact} operator in $X$; see Sections 4-12. For some of the spaces $X$ this is known and for others it needs to be established. Moreover, the point spectrum of $C_t$ in the considered spaces $X$ is always $\{\frac{1}{n+1}\,:\, n\in\N_0\}$, which is also the point spectrum of $C_t^\omega\in\cL(\omega)$, and the corresponding eigenvectors of $C_t^\omega$ in $\omega$ are precisely known; see \eqref{eq.EigenvalueC}. It is often non-trivial to verify that these eigenvectors actually belong to $X$. The compactness of $C_t\in \cL(X)$ is the crucial property. It allows the general results of \cite[Section 6]{ABR-9}, when specialized from locally convex Hausdorff spaces to Banach spaces, together with new results developed here, to be applied to $C_t$ in order to determine whether (or not) it is power bounded, mean ergodic or supercyclic in $X$; see Sections 2 and 3. The choice of the particular BK-spaces $X$ presented in Sections 4-12 in which $C_t$ acts, for $t\in [0,1]$, is extensive and varied and includes $\ell^p$ ($1\leq p\leq\infty$), the space $c$ and its closed subspace $c_0$, as well as the space $cs$ of all convergent series. Also included are the Bachelis spaces $N^p$ ($1<p<\infty$) arising in classical harmonic analysis, the discrete Cesàro spaces $ces_p$, for $p\in\{0\}\cup (1,\infty]$, and their dual spaces $d_s$ ($1\leq s<\infty$). In addition, the spaces of  $p$-bounded variation $bv_p$, for $p\in\{0\}\cup [1,\infty)$, are treated as well as the generalized Hahn spaces $h_d$. As will become evident, each space $X$ which is considered has its own individual features which need to be invoked in order to verify that the general results of Sections 2 and 3 can be applied to $C_t$ acting in that $X$. There is ample scope for future research to adapt the theory developed here in order to apply it to the operators $C_t$ acting in further classes of BK-spaces $X$.

\markboth{A.\,A. Albanese, J. Bonet and W.\,J. Ricker}%
{\MakeUppercase{Mean ergodic and related properties of generalized Ces\`aro operators}}

\section{Preliminaries and notation}

Given locally convex Hausdorff spaces $X, Y$  we denote by $\cL(X,Y)$ the space of all continuous linear operators from $X$ into $Y$. If $X=Y$, then we simply write $\cL(X)$ for $\cL(X,X)$. Equipped with the topology  of pointwise convergence  $\tau_s$ on $X$ (i.e., the strong operator topology) the locally convex Hausdorff space $\cL(X)$ is denoted by $\cL_s(X)$ and for the topology $\tau_b$ of uniform convergence on bounded sets the locally convex Hausdorff space $\cL(X)$ is denoted by $\cL_b(X)$. The closure of a subset $A\subseteq X$ is denoted by $\ov{A}$.
Denote by $\cB(X)$ the collection of all bounded subsets of $X$ and by $\Gamma_X$ a system of continuous seminorms determing the topology of $X$.
 The identity operator on $X$ is denoted by $I$.  The \textit{dual operator} of $T\in \cL(X)$ is denoted by  $T'$; it acts in the topological dual space $X':=\cL(X,\C)$ of $X$. Denote by $X'_\si$ (resp., by $X'_\beta$) the space $X'$ with the weak* topology $\si(X',X)$ (resp., with the strong topology $\beta(X',X)$); see \cite[\S 21.2]{23} for the definition. It is known that $T'\in \cL(X'_\si)$ and $T'\in \cL(X_\be')$,  \cite[p.134]{24}.
For the general theory of functional analysis and operator theory relevant to this paper see, for example, \cite{Dow,23,24,Y}.

Given a locally convex Hausdorff space $X$ and $T\in \cL(X)$, the resolvent set $\rho(T;X)$ of $T$ consists of all $\lambda\in\C$ such that $R(\lambda,T):=(\lambda I-T)^{-1}$ exists in $\cL(X)$. The set $\sigma(T;X):=\C\setminus \rho(T;X)$ is called the \textit{spectrum} of $T$. The \textit{point spectrum}  $\sigma_{pt}(T;X)$ of $T$ consists of all $\lambda\in\C$ (also called an eigenvalue  of $T$) such that $(\lambda I-T)$ is not injective. An eigenvalue $\lambda$ of $T$ is called (geometrically) \textit{simple} if ${\rm dim} \Ker (\lambda I-T)=1$. The range of $\lambda I-T$ is denoted by ${\rm Im}(\lambda I-T):=\{(\lambda I-T)x\,:\, x\in X\}$.

 A linear map $T\colon X\to Y$, with $X,Y$ locally convex Hausdorff spaces, is called \textit{compact} if there exists a neighbourhood $\cU$ of $0$ in $X$ such that $T(\cU)$ is a relatively compact set in $Y$. It is routine to show that necessarily $T\in \cL(X,Y)$. According to \cite[\S 42.1(1)]{24} the compact operators form a 2-sided ideal in $\cL(X,Y)$.

\begin{prop}\label{P.SimpleFact1} Let $X$ be a locally convex Hausdorff space, $Y$ be a  vector subspace of $X$ and $T\in \cL(X)$  satisfy $T(Y)\subseteq  Y$. 
	\begin{itemize}
		\item[\rm (i)] For the relative topology induced in $Y$ by $X$, the restriction operator $T|_Y\in\cL(Y)$. Moreover, a complex number $\lambda$ satisfies 
		\begin{equation}\label{eq.prop}
	\quad	\lambda\in \sigma_{pt}(T|_{Y};Y) \mbox{ if and only if }
		Y\cap \{x\in X\setminus\{0\}\,:\, Tx=\lambda x\}\not=\emptyset.
		\end{equation}
		In particular, $\sigma_{pt}(T|_{Y};Y) \subseteq  \sigma_{pt}(T;X)$.
		\item[\rm (ii)] Let $Y$ have  a locally convex Hausdorff topology such that the natural inclusion $Y\subseteq X$ is continuous and $T|_Y\in\cL(Y)$. Suppose that $\alpha\in \sigma_{pt}(T';X')$ and there exists $u\in X'$ satisfying  $u|_{Y}\not =0$ and $T'u=\alpha u$. Then  $\alpha\in \sigma_{pt}((T|_{Y})';Y')$.
		\end{itemize}
		\end{prop}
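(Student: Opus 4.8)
The plan is to prove the two parts essentially by unwinding definitions, using continuity of the inclusions to control how things interact.

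For part (i): First I would check that $T|_Y \in \cL(Y)$. Since $T(Y) \subseteq Y$, the map $T|_Y\colon Y \to Y$ is well-defined and linear. For a continuous seminorm $q$ on $Y$ (which is the restriction of a continuous seminorm on $X$ by definition of the relative topology), continuity of $T$ on $X$ gives a continuous seminorm $p$ on $X$ with $q(Tx) \le p(x)$ for all $x \in X$; restricting to $Y$ yields continuity of $T|_Y$. Then for the eigenvalue equivalence: $\lambda \in \sigma_{pt}(T|_Y; Y)$ means there exists $x \in Y \setminus \{0\}$ with $T|_Y x = \lambda x$, i.e. $Tx = \lambda x$; this is exactly saying $Y \cap \{x \in X \setminus \{0\} : Tx = \lambda x\} \neq \emptyset$. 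The inclusion $\sigma_{pt}(T|_Y; Y) \subseteq \sigma_{pt}(T; X)$ is immediate since any such $x$ is a nonzero element of $X$.

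For part (ii): The hypothesis gives $u \in X'$ with $T'u = \alpha u$ and $u|_Y \neq 0$. The idea is that $u|_Y$ should be the desired eigenvector of $(T|_Y)'$ in $Y'$. First, since the inclusion $j\colon Y \hookrightarrow X$ is continuous, the restriction $u|_Y = u \circ j$ belongs to $Y'$, and by assumption it is nonzero. Next I need to verify $(T|_Y)'(u|_Y) = \alpha (u|_Y)$ in $Y'$. For any $y \in Y$ we compute
\begin{equation*}
\langle (T|_Y)'(u|_Y), y \rangle = \langle u|_Y, T|_Y y \rangle = \langle u, Ty \rangle = \langle T'u, y \rangle = \langle \alpha u, y \rangle = \alpha \langle u|_Y, y \rangle,
\end{equation*}
where the second equality uses $T|_Y y = Ty \in Y$ and the definition of the restriction functional. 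Hence $(T|_Y)'(u|_Y) = \alpha(u|_Y)$ with $u|_Y \in Y' \setminus \{0\}$, so $\alpha \in \sigma_{pt}((T|_Y)'; Y')$.

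I do not expect any serious obstacle here; the only point requiring a little care is being explicit that the relative-topology seminorms on $Y$ are restrictions of $X$-seminorms (so that continuity of $T|_Y$ in (i) is genuinely automatic), and, in (ii), that continuity of the inclusion $Y \subseteq X$ is exactly what guarantees $u \circ j \in Y'$ rather than merely an algebraic functional. Everything else is a direct duality computation.
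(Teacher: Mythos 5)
Your proof is correct and follows essentially the same route as the paper's: part (i) is the same unwinding of definitions (the paper simply declares the continuity of $T|_Y$ "routine", which you spell out), and part (ii) is the identical duality computation with $v:=u|_Y$ as the eigenvector of $(T|_Y)'$. Your remark separating the two roles of the hypotheses in (ii) --- continuity of the inclusion gives $u|_Y\in Y'$, while the assumption $u|_Y\neq 0$ gives nontriviality --- is in fact slightly more careful than the paper's phrasing.
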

		
		\begin{proof}
			(i)  It is routine to verify that $T|_Y\in \cL(Y)$. To establish \eqref{eq.prop}, suppose first  that $\lambda\in \sigma_{pt}(T|_Y;Y)$. Then  there exists $y\in Y\setminus\{0\}$ such that $(T|_Y)y=\lambda y$. Since also $y\in X\setminus \{0\}$ and $Ty=\lambda y$,  it is clear that $y$ belongs to the right-side of \eqref{eq.prop} which is then necessarily non-empty. Conversely, if the right-side of \eqref{eq.prop} is non-empty, then there exists 
			 $z\in Y\setminus\{0\}$ such that $(T|_Y)z=Tz=\lambda z$, that is, $\lambda\in \sigma_{pt}(T|_Y;Y)$.

			(ii) The assumption $T|_Y\in \cL(Y)$ implies that $(T|_Y)'\in \cL(Y'_\sigma)$. Moreover, $v:=u|_Y\in Y'\setminus\{0\}$ because of the continuity of the natural inclusion $Y\subseteq X$. Since $T(Y)\subseteq Y$, it follows for each $y\in Y$ that 
			\[
			\langle y, (T|_{Y})'v\rangle=\langle (T|_{Y})y,v\rangle=\langle Ty,u\rangle=\langle y, T'u\rangle=\langle y, \alpha u\rangle=\langle y, \alpha v\rangle.
			\]
			It follows that $(T|_{Y})'v=\alpha v$ with $v\not=0$ and so $\alpha\in \sigma_{pt}((T|_Y)';Y')$.
		\end{proof} 
	
	\begin{remark}\label{remark2A}\rm  For $Y$ as in Proposition \ref{P.SimpleFact1}(ii) suppose that the closed graph theorem is valid (eg. $Y$ is a Fr\'echet space for its given topology). Then the condition $T|_Y\in \cL(Y)$ follows automatically. Indeed, consider a net $y_\alpha\to 0$ in $Y$ such that $(T|_Y)y_\alpha\to z$ in $Y$, for some $z\in Y$. Then also $y_\alpha\to 0$ in $X$ (as $ Y\subseteq X$ continuously) and  $Ty_\alpha\to z$ in $X$. Since $T\in \cL(X)$, it follows that $z=0$. Hence, $T|_Y\colon Y\to Y$ is a closed operator and so $T|_Y\in \cL(Y)$.
	\end{remark}

An operator $T\in \cL(X)$, with $X$ a locally convex Hausdorff space, is called \textit{power bounded} if $\{T^n:\, n\in\N\}$
is an equicontinuous subset of $\cL(X)$. Here $T^n:= T \circ ...\circ T$ is the composition of T with itself $n$ times. For a Banach space $X$, this means precisely that
$\sup_{n\in\N}\|T^n\|_{X\to X}<\infty$, where $\|\cdot\|_{X\to X}$ denotes the operator norm in $\cL(X)$.  Given $T\in\cL(X)$,  its sequence of averages
\begin{equation}\label{average}
	T_{[n]} := \frac{1}{n}\sum_{m=1}^nT^m,\quad  n\in \N,
\end{equation}
is called the \textit{Ces\`aro means} of $T$. The operator $T$ is said to be \textit{mean ergodic} (resp., \textit{uniformly mean ergodic}) if $(T_{[n]})_{n\in\N}$
is a convergent sequence in $\cL_s(X)$ (resp., in $\cL_b(X)$). It follows from
\eqref{average} that
$\frac{T^n}{n}= T_{[n]}- \frac{n-1}{n}T_{[n-1]}$,
for $n\geq 2$. Hence,
\begin{equation}\label{eq.Tende0}
\frac{T^n}{n}\to 0, \quad n\to\infty,
\end{equation} 
in $\cL_s(X)$ (resp., in $\cL_b(X)$),
whenever $T$ is
mean ergodic (resp., uniformly mean ergodic). A relevant text is \cite{K}; see also \cite{BJS} for locally convex Hausdorff spaces.

Concerning the dynamics of a continuous linear operator $T$ defined on a separable,	locally convex Hausdorff space X, recall that $T$ is said to be \textit{hypercyclic} if there exists $x\in X$ whose orbit $\{T^nx:\, n\in\N_0\}$ is	dense in $X$. If, for some $x\in X$, the projective orbit $\{\lambda T^nx:\, \lambda\in\C,\ n\in\N_0\}$ is dense	in $X$, then $T$ is called \textit{supercyclic}. Clearly, every hypercyclic operator is also supercyclic.  It is routine to verify if $X$ is non-separable, then \textit{no} operator in $\cL(X)$ can be supercyclic. As general	references, we refer to \cite{B-M,G-P}.

We now collect some facts required in the sequel.

\begin{prop}\label{P.SimpleFact2} Let $X$ be a locally convex Hausdorff space, $Y$ be a closed subspace of $X$ and $T\in \cL(X)$ satisfy $T(Y)\subseteq  Y$. The  following properties are satisfied.
	\begin{itemize}
	\item[\rm (i)] If $T\in \cL(X)$ is compact, then $T|_{Y}\in \cL(Y)$ is compact. 	If, in addition,   $Y$ is  infinite dimensional, then $0\in \sigma(T|_{Y};Y)$.
\item[\rm (ii)] If $T\in \cL(X)$ is power bounded, then so is $T|_{Y}\in \cL(Y)$.
If, in addition,  $X$ is a Banach space, then $\|(T|_{Y})^n\|_{Y\to Y}\leq \|
T^n\|_{X\to X}$ for all $n\in\N$.
\item[\rm (iii)] If $T\in \cL(X)$ is (uniformly) mean ergodic, then so is $T|_{Y}\in \cL(Y)$.
\end{itemize}
\end{prop}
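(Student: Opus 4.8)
The plan is to establish the three parts in turn, in each case using two elementary observations: since $T(Y)\su Y$ one has $(T^n)|_Y=(T|_Y)^n$ and $(T_{[n]})|_Y=(T|_Y)_{[n]}$ for every $n\in\N$; and a $0$-neighbourhood base for the relative topology of $Y$ is obtained by intersecting with $Y$ a $0$-neighbourhood base of $X$, while each continuous seminorm on $Y$ is dominated by the restriction to $Y$ of some continuous seminorm on $X$.

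For (i) I would start from a $0$-neighbourhood $\cU$ in $X$ with $T(\cU)$ relatively compact in $X$. Then $\cU\cap Y$ is a $0$-neighbourhood in $Y$ and $(T|_Y)(\cU\cap Y)\su T(\cU)\cap Y$; since $Y$ is closed in $X$, the closure of this set in $Y$ coincides with its closure in $X$, which is a closed subset of the compact set $\ov{T(\cU)}$ and hence compact. Thus $T|_Y$ is compact. For the last clause I would argue by contradiction: if $0\in\rho(T|_Y;Y)$, then $T|_Y$ is invertible in $\cL(Y)$, so $I_Y=(T|_Y)^{-1}\circ(T|_Y)$ is compact by the ideal property of compact operators recalled above; hence $Y$ admits a relatively compact $0$-neighbourhood, i.e.\ $Y$ is locally compact, which forces $\dim Y<\infty$, a contradiction. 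Therefore $0\in\sigma(T|_Y;Y)$.

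For (ii), power boundedness of $T$ means that for every $0$-neighbourhood $\cV$ in $X$ there is a $0$-neighbourhood $\cU$ in $X$ with $T^n(\cU)\su\cV$ for all $n\in\N$; intersecting with $Y$ gives $(T|_Y)^n(\cU\cap Y)\su\cV\cap Y$ for all $n$, so $\{(T|_Y)^n:n\in\N\}$ is equicontinuous in $\cL(Y)$. When $X$ is a Banach space the closed unit ball of $Y$ lies in that of $X$, so $\|(T|_Y)^n\|_{Y\to Y}=\sup_{y\in Y,\,\|y\|\leq 1}\|T^ny\|\leq\|T^n\|_{X\to X}$. For (iii), suppose $T_{[n]}\to P$ in $\cL_s(X)$ (resp.\ in $\cL_b(X)$), with $P\in\cL(X)$. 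For each $y\in Y$ the vectors $(T|_Y)_{[n]}y=T_{[n]}y$ lie in $Y$ and converge to $Py$ in $X$, so closedness of $Y$ gives $Py\in Y$; hence $P(Y)\su Y$ and $P|_Y\in\cL(Y)$, and $(T|_Y)_{[n]}\to P|_Y$ pointwise on $Y$, i.e.\ in $\cL_s(Y)$. In the uniform case, given a bounded set $B\su Y$ (also bounded in $X$) and a continuous seminorm $q$ on $Y$ dominated by the restriction of a continuous seminorm $p$ on $X$, one has $\sup_{y\in B}q((T|_Y)_{[n]}y-(P|_Y)y)\leq\sup_{x\in B}p(T_{[n]}x-Px)\to 0$, so $(T|_Y)_{[n]}\to P|_Y$ in $\cL_b(Y)$ as well; thus $T|_Y$ is (uniformly) mean ergodic.

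These are essentially routine verifications with neighbourhood bases and seminorms. The points that need some care are the repeated appeal to the closedness of $Y$ in $X$ — used in (i) to carry relative compactness from $X$ to $Y$, and in (iii) to guarantee that the limit operator $P$ leaves $Y$ invariant so that $P|_Y$ is defined — and, in (i), the invocation of the fact that a Hausdorff locally compact topological vector space is finite dimensional, which is exactly what puts $0$ into the spectrum of the compact operator $T|_Y$ on the infinite-dimensional space $Y$.
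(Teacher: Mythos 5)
Your proof is correct and follows essentially the same route as the paper's: intersecting $0$-neighbourhoods with $Y$ for compactness and power boundedness, the ``locally compact implies finite dimensional'' argument to force $0\in\sigma(T|_Y;Y)$, and the closedness of $Y$ to keep the limits (and the operator $P$) inside $Y$ for mean ergodicity. The only cosmetic difference is that you deduce local compactness of $Y$ via the ideal property applied to $I_Y=(T|_Y)^{-1}\circ(T|_Y)$, whereas the paper notes directly that the topological isomorphism $T|_Y$ maps a $0$-neighbourhood onto a relatively compact $0$-neighbourhood.
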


\begin{proof} 
(i) According to Proposition \ref{P.SimpleFact1}(i) the operator $T|_Y\in \cL(Y)$. Since $T\in \cL(X)$ is compact, there exists  a neighbourhood $\cU$ of $0$ in $X$ such that $T(\cU)$ is a relatively compact set in $X$. Then $\cU\cap Y$ is a  neighbourhood of $0$ in $Y$ and 
$$T|_{Y}(\cU\cap Y)=T|_{Y}(\cU)\cap  T|_{Y}(Y)\subseteq T|_{Y}(\cU)\cap Y\subseteq T(\cU)\cap Y$$
 is relatively compact in $Y$. This shows that $T|_Y\in \cL(Y)$ is compact.

If, in addition, $Y$ is  infinite dimensional, then necessarily $0\in {\sigma(T|_{Y};Y)}$. Indeed, if $0\not\in {\sigma(T|_{Y};Y)}$, then $0\in\rho(T|_{Y};Y)$  and hence, $ T|_{Y}\colon Y\to Y$ is a surjective topological isomorphism. Since $T|_Y\in\cL(Y)$ is both compact and a topological isomorphism from $Y$ onto itself, there exists  a   neighbourhood $\cU$  of $0$ in $Y$ such that $(T|_Y)(\cU)$ is a relatively compact neighbourhood  of $0$ in $Y$. It follows that $Y$ is necessarily finite dimensional; see \cite[\S 15.7(1)]{24}. This is a contradiction.

(ii) By assumption the set $\{T^n\,:\, n\in\N\} \subseteq \cL(X)$ is equicontinuous. Hence, for every $p\in \Gamma_X$ there exist $q\in \Gamma_X$, $c>0$ such that
\[
p(T^nx)\leq cq(x),\quad x\in X,\ n\in\N,
\]
\cite[Ch. VIII, Section 3]{Y}.
Accordingly, 
\[
p((T|_{Y})^ny)=p(T^ny)\leq cq(y),\quad y\in Y,
\]
as $Y$ is a subspace of $X$ and $T(Y)\subseteq Y$. Since $\{p|_{Y}\, :\, p\in\Gamma_X\}$ generates the locally convex topology of $Y$ induced by $X$, it follows that $\{(T|_Y)^n\,:\ n\in\N\}$ is equicontinuous in $\cL(Y)$, that is, $T|_Y\in \cL(Y)$ is power bounded.

Suppose that $X$ is a Banach space. Then, for  each fixed $n\in\N$, we have that
\[
\|(T|_{Y})^ny\|=\|T^ny\|\leq \|T^n\|_{X\to X}\|y\|,\quad y\in Y,
\]
from which it is  clear that $\|(T|_{Y})^n\|_{Y\to Y}\leq \|
T^n\|_{X\to X}$.

(iii) Suppose that $T\in \cL(X)$ is mean ergodic. Then, for any fixed $y\in Y$, the sequence $((T|_Y)_{[n]}y)_{n\in\N}=(T_{[n]}y)_{n\in\N}$ converges in $X$ and hence, also in $Y$. This shows that $T|_{Y}\in \cL(Y)$ is mean ergodic.

In the event that $T\in \cL(X)$ is uniformly mean ergodic, there exists $P\in \cL(X)$ such that $T_{[n]}\to P$ in $\cL_b(X)$ as $n\to\infty$. Since $T(Y)\subseteq Y$, also $P(Y)\subseteq Y$. Indeed, $T(Y)\subseteq Y$ implies that $T^m(Y)\subseteq Y$ for every  $m\in\N$ and hence, $T_{[n]}(Y)=\frac{1}{n}\sum_{m=1}^nT^m(Y)\subseteq Y$ for every $n\in\N$. Since $Y$ is a closed subspace of $X$, it follows that $\lim_{n\to \infty}T_{[n]}y=Py\in Y$ for every $y\in Y$.

Finally, the fact that $T_{[n]}\to P$ in  $\cL_b(X)$ as $n\to\infty$ implies that $(T|_{Y})_{[n]}\to P{|_Y}$ in $\cL_b(Y)$ as $n\to\infty$, after noting that $\cB(Y)=\{B\cap Y\,:\, B\in \cB(X)\}$.
\end{proof}

The following result is a consequence of Sine's criterion. It can be found in  \cite[Corollary 5.9]{BJS} and in  \cite[\S 21, Theorem 1.4]{K} for Banach spaces.

\begin{prop}\label{P.Non-ME}Let $X$ be a locally convex Hausdorff space and $T\in \cL(X)$ be a power bounded operator such that $1\not\in\sigma_{pt}(T;X)$ but, $1\in \sigma_{pt}(T';X')$. Then $T$ is not  mean ergodic.
	\end{prop}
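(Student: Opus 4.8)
The plan is to derive this from Sine's criterion characterizing mean ergodicity of power bounded operators. First I recall the setup: Sine's theorem (see \cite[Corollary 5.9]{BJS} or \cite[\S 21, Theorem 1.4]{K}) states that a power bounded operator $T$ on a (suitable) locally convex space is mean ergodic if and only if the fixed space $\Ker(I-T)$ separates the fixed space of the dual, $\Ker(I-T')$; equivalently, a power bounded $T$ on a Banach space is mean ergodic precisely when $X = \Ker(I-T) \oplus \ov{\mathrm{Im}(I-T)}$, and in that case $X' = \Ker(I-T') \oplus \ov{\mathrm{Im}(I-T')}$ as well, with the Ces\`aro means converging to the projection onto $\Ker(I-T)$ along $\ov{\mathrm{Im}(I-T)}$.

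Next I would argue by contradiction: suppose $T$ is mean ergodic, so $T_{[n]} \to P$ in $\cL_s(X)$ for some projection $P \in \cL(X)$ with range $\Ker(I-T)$. The hypothesis $1 \notin \sigma_{pt}(T;X)$ says $\Ker(I-T) = \{0\}$, hence $P = 0$, i.e. $T_{[n]} \to 0$ in $\cL_s(X)$. Passing to dual operators, for every $x \in X$ and every $\psi \in X'$ we have $\langle x, (T')_{[n]}\psi\rangle = \langle T_{[n]}x, \psi\rangle \to 0$, so $(T')_{[n]} = (T_{[n]})' \to 0$ in the weak* operator topology on $\cL(X'_\sigma)$. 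But by hypothesis $1 \in \sigma_{pt}(T';X')$, so there is some $u \in X' \setminus \{0\}$ with $T'u = u$; then $(T')_{[n]}u = u$ for all $n$, and weak* convergence $(T')_{[n]}u \to 0$ forces $u = 0$, a contradiction. This completes the proof, and in fact shows one does not even need the full force of Sine's criterion here — only the elementary consequence that mean ergodicity together with trivial fixed space forces $T_{[n]} \to 0$ strongly, which follows directly from the standard decomposition $X = \Ker(I-T) \oplus \ov{\mathrm{Im}(I-T)}$ valid for mean ergodic power bounded operators.

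The one point requiring a little care is the passage from $T_{[n]} \to 0$ in $\cL_s(X)$ to the conclusion that no nonzero $T'$-fixed functional can exist: this is just the observation that if $T'u = u$ then $u = (T')_{[n]}u$ for every $n$, and testing against any fixed $x \in X$ gives $\langle x, u\rangle = \langle T_{[n]}x, u\rangle \to \langle 0, u\rangle = 0$, whence $u$ annihilates all of $X$ and is therefore $0$. No topological subtleties about $X'_\beta$ versus $X'_\sigma$ intervene, since we only use pointwise (weak*) convergence. The main ``obstacle'', such as it is, lies in correctly invoking Sine's criterion (or its Banach-space form in \cite[\S 21, Theorem 1.4]{K}) to justify that a mean ergodic power bounded operator with $\Ker(I-T) = \{0\}$ has $T_{[n]} \to 0$; once that is granted, the rest is a two-line duality argument.
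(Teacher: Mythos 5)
Your argument is correct. Note that the paper does not actually prove this proposition: it simply records it as a consequence of Sine's criterion, citing \cite[Corollary 5.9]{BJS} and \cite[\S 21, Theorem 1.4]{K}. Your proposal begins by invoking that criterion but then, as you yourself observe, only uses the elementary facts: if $T$ is power bounded and mean ergodic with limit $P=\lim_n T_{[n]}$ in $\cL_s(X)$, then $TP=P$ (because $TT_{[n]}-T_{[n]}=\tfrac{1}{n}(T^{n+1}-T)\to 0$ pointwise by equicontinuity of $\{T^n\}$), so $\mathrm{Im}(P)\subseteq \Ker(I-T)=\{0\}$ and hence $P=0$; then any $u$ with $T'u=u$ satisfies $\langle x,u\rangle=\langle T_{[n]}x,u\rangle\to\langle Px,u\rangle=0$ for every $x$, forcing $u=0$. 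This is a clean, self-contained proof of exactly the implication needed (it is in effect the easy half of Sine's criterion), and it works in the stated generality of locally convex Hausdorff spaces without any appeal to a direct-sum decomposition of $X$ or $X'$. The only cosmetic remark is that the opening paragraph's full statement of Sine's criterion (including the decomposition $X=\Ker(I-T)\oplus\ov{\mathrm{Im}(I-T)}$) is not actually used and could be dropped; the substance of your proof stands on its own.
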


	We record from \cite[Proposition 5.1 \& Remark 5.3]{A-M}  the following fact, where $B(0,\delta):=\{z\in\C\, :\, |z|<\delta\}$ for every $\delta>0$ and the bar denotes closure in $\C$.
	
	\begin{prop}\label{P.inclusioneD} Let $X$ be a sequentially complete, barrelled locally convex Hausdorff space and $T\in \cL(X)$.
		 If $\frac{T^n}{n}\to 0$ in $\cL_s(X)$ as $n\to\infty$, then $\sigma(T;X)\subseteq \overline{B(0,1)}$. 
			In particular, if $T$ is power bounded, then $\sigma(T;X)\subseteq \overline{B(0,1)}$.   
	\end{prop}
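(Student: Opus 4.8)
The plan is to show that every $\lambda\in\C$ with $|\lambda|>1$ belongs to the resolvent set $\rho(T;X)$, from which $\sigma(T;X)\subseteq\ov{B(0,1)}$ is immediate. The candidate for $R(\lambda,T)$ is the Neumann-type series $S_\lambda:=\sum_{n=0}^{\infty}\lambda^{-(n+1)}T^n$, and the whole point is to make sense of this series and to identify it with $(\lambda I-T)^{-1}$ using only the hypothesis $\frac{T^n}{n}\to 0$ in $\cL_s(X)$ rather than an honest uniform bound on the iterates $T^n$.

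First I would convert the hypothesis into a quantitative equicontinuity statement. Since $X$ is Hausdorff, every convergent sequence in $X$ is bounded; hence for each $x\in X$ the sequence $\big(\tfrac1n T^nx\big)_{n\in\N}$ is bounded in $X$, i.e. the family $\{\tfrac1n T^n:n\in\N\}\subseteq\cL(X)$ is pointwise bounded. Because $X$ is barrelled, the Banach--Steinhaus theorem gives that this family is equicontinuous: for every $p\in\Gamma_X$ there exist $q\in\Gamma_X$ and $c>0$ with
\[
p(T^nx)\le c\,n\,q(x),\qquad x\in X,\ n\in\N.
\]

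Now fix $\lambda$ with $|\lambda|>1$. For each $p\in\Gamma_X$, with $q,c$ as above, one has $\sum_{n\ge 0}p\big(\lambda^{-(n+1)}T^nx\big)\le c\,q(x)\sum_{n\ge 0}(n+1)|\lambda|^{-(n+1)}<\infty$, so the partial sums of $\sum_n\lambda^{-(n+1)}T^nx$ form a Cauchy sequence in $X$; by sequential completeness it converges, defining a linear map $S_\lambda$ on $X$, and the same estimate yields $p(S_\lambda x)\le C_\lambda\, q(x)$ for a constant $C_\lambda$ depending only on $\lambda$ and $c$, so $S_\lambda\in\cL(X)$. Telescoping the partial sums gives
\[
(\lambda I-T)\sum_{n=0}^{N}\lambda^{-(n+1)}T^n=I-\lambda^{-(N+1)}T^{N+1}=\Big(\sum_{n=0}^{N}\lambda^{-(n+1)}T^n\Big)(\lambda I-T),
\]
and since $p\big(\lambda^{-(N+1)}T^{N+1}x\big)\le c\,(N+1)|\lambda|^{-(N+1)}q(x)\to 0$ as $N\to\infty$, letting $N\to\infty$ yields $(\lambda I-T)S_\lambda=I=S_\lambda(\lambda I-T)$. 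Hence $\lambda\in\rho(T;X)$ with $R(\lambda,T)=S_\lambda$, and therefore $\sigma(T;X)\subseteq\ov{B(0,1)}$.

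For the final assertion: if $T$ is power bounded then $\{T^n:n\in\N\}$ is equicontinuous, so for every $p\in\Gamma_X$ there are $q\in\Gamma_X$ and $c>0$ with $p(T^nx)\le c\,q(x)$ for all $n$, whence $\frac{1}{n}p(T^nx)\to 0$ for every $x$, i.e. $\frac{T^n}{n}\to 0$ in $\cL_s(X)$; the first part then applies. The only genuinely delicate point is the passage, via barrelledness and Banach--Steinhaus, from the coordinatewise decay $\frac{T^n}{n}\to 0$ to the uniform growth bound $p(T^nx)\le c\,n\,q(x)$ with a single seminorm $q$ valid for all $n$; once that is secured, the remainder is the familiar Neumann-series argument adapted to a sequentially complete locally convex space.
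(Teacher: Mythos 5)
Your argument is correct, and it is essentially the standard proof of this fact: the paper itself gives no proof but simply records the statement from \cite[Proposition 5.1 \& Remark 5.3]{A-M}, where the same combination of Banach--Steinhaus (via barrelledness) and a Neumann-type series (via sequential completeness) is the natural route. The only cosmetic point is that the estimate $p(T^nx)\le c\,n\,q(x)$ obtained from equicontinuity of $\{\tfrac1n T^n : n\in\N\}$ covers $n\ge 1$ only, so the $n=0$ term of $\sum_{n\ge 0}p(\lambda^{-(n+1)}T^nx)$ requires replacing $q$ by $\max(p,q)$ (and $c$ by $\max(c,1)$); this is trivial and does not affect the argument.
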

	
	The following result is a special case of Theorem 6.4 in \cite{ABR-9}; see also its proof, where it is shown that $\|T^n-P\|_{X\to X}\to 0$ for $n\to\infty$ and hence, also the sequence of averages $(T_{[n]})_{n\in\N_0}$ of $(T^n)_{n\in\N_0}$ satisfies $\|T_{[n]}-P\|_{X\to X}\to 0$ for $n\to\infty$.
	
	\begin{theorem}\label{T-Fact-1} Let $X$ be a Banach space and $T\in \cL(X)$ satisfy the  conditions that
		\begin{itemize}
			\item[\rm (i)] $T$ is compact,
			\item[\rm (ii)] $1\in \sigma(T;X)$ and $\sigma(T;X)\setminus\{1\}\subseteq\overline{B(0,\delta)}$ for some $\delta\in (0,1)$, and
			\item[\rm (iii)] $\Ker(I-T)\cap {\rm Im}(I-T)=\{0\}$.
		\end{itemize}
		Then $T$ is power bounded, uniformly mean ergodic and  $\|T_{[n]}-P\|_{X\to X}\to 0$ for $n\to\infty$, where $P$ is the projection onto $\Ker(I-T)$ along ${\rm Im}(I-T)$.
		\end{theorem}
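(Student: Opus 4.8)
The plan is to reduce everything to the Riesz--Schauder spectral theory of compact operators on a Banach space, combined with the structural hypothesis (iii) which forces the algebraic and geometric multiplicities of the eigenvalue $1$ to coincide. First I would invoke the compactness of $T$ from hypothesis (i): by the Riesz--Schauder theorem, $\sigma(T;X)\setminus\{0\}$ consists of isolated eigenvalues of finite algebraic multiplicity, and $1$ is such an eigenvalue by hypothesis (ii). The Riesz decomposition associated with the spectral set $\{1\}$ then yields a $T$-invariant topological direct sum $X=N\oplus R$, where $N=\Ker(I-T)^k$ and $R=\mathrm{Im}(I-T)^k$ for the (finite) Riesz index $k$ of the eigenvalue $1$, with $N$ finite dimensional, $T|_R$ having spectrum $\sigma(T;X)\setminus\{1\}\subseteq\ov{B(0,\delta)}$, and $(I-T)|_N$ nilpotent.

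The key step is to show, using hypothesis (iii), that the Riesz index $k$ equals $1$, i.e.\ that $N=\Ker(I-T)$ and $R=\mathrm{Im}(I-T)$. Suppose $k\geq 2$; then there exists $x\in X$ with $(I-T)^2x=0$ but $(I-T)x\neq 0$. Setting $y:=(I-T)x$, we have $y\in\mathrm{Im}(I-T)$, $y\neq 0$, and $(I-T)y=(I-T)^2x=0$, so $y\in\Ker(I-T)$. This contradicts $\Ker(I-T)\cap\mathrm{Im}(I-T)=\{0\}$. Hence $k=1$, $N=\Ker(I-T)$, $R=\mathrm{Im}(I-T)$, the sum $X=\Ker(I-T)\oplus\mathrm{Im}(I-T)$ is a $T$-invariant topological direct sum, and $P$ — the projection onto $\Ker(I-T)$ along $\mathrm{Im}(I-T)$ — is a bounded projection that commutes with $T$ and satisfies $TP=P$, $T(I-P)=(I-T)|_R$ acting on $R$.

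It remains to estimate $T^n$. On $N=\Ker(I-T)$ one has $T^n|_N=I|_N$ for all $n$, so $\|T^n|_N\|$ is bounded. On $R$, the operator $S:=T|_R\in\cL(R)$ has spectral radius $r(S)\leq\delta<1$, so by the spectral radius formula $\|S^n\|_{R\to R}^{1/n}\to r(S)<1$; picking any $\delta'\in(r(S),1)$ gives a constant $c>0$ with $\|S^n\|_{R\to R}\leq c\,(\delta')^n$ for all $n$, whence $\|S^n\|_{R\to R}\to 0$. Writing $T^n=P+S^n(I-P)$ (using $TP=P$ and that $S^n$ acts on $R=\mathrm{Im}(I-P)$), we get $\|T^n-P\|_{X\to X}\leq \|S^n\|_{R\to R}\,\|I-P\|_{X\to X}\to 0$ as $n\to\infty$. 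This shows $\{T^n\}$ is norm-bounded, hence $T$ is power bounded; moreover $T^n\to P$ in $\cL_b(X)$, and therefore the Ces\`aro means $T_{[n]}=\frac1n\sum_{m=1}^n T^m$ also converge to $P$ in $\cL_b(X)$ (the Ces\`aro means of a convergent sequence converge to the same limit), i.e.\ $\|T_{[n]}-P\|_{X\to X}\to 0$, so $T$ is uniformly mean ergodic with limit projection $P$.

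I expect the main obstacle to be setting up the Riesz decomposition cleanly: one must invoke that $\{1\}$ is an isolated point of the spectrum — which follows from hypothesis (ii) since $\sigma(T;X)\setminus\{1\}$ lies in the closed disk of radius $\delta<1$ — in order to form the Riesz projection via the holomorphic functional calculus (a contour integral of the resolvent around $1$), and then identify the range and kernel of this projection with the finite-index generalized eigenspace and its complement. Once the decomposition is in place, reducing the Riesz index to $1$ via (iii) is immediate, and the norm estimate via the spectral radius formula on the complementary part is routine. An alternative to the contour-integral construction, if one prefers to stay closer to the cited source, is simply to quote Theorem 6.4 of \cite{ABR-9} in its Banach space specialization, as the statement already does; the sketch above records the argument behind it.
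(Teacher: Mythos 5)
Your proof is correct and is essentially the argument the paper relies on: the paper offers no proof of its own but cites Theorem 6.4 of \cite{ABR-9}, recording only that the proof there yields $\|T^n-P\|_{X\to X}\to 0$ (whence the same for the averages $T_{[n]}$), and that is exactly what your Riesz--Schauder decomposition at the isolated eigenvalue $1$ --- with Riesz index forced to be $1$ by hypothesis (iii) --- together with the spectral-radius estimate $\|S^n\|_{R\to R}\le c(\delta')^n$ on the complementary part establishes. The only slip is the line ``$T(I-P)=(I-T)|_R$'', a harmless typo for $T(I-P)=S(I-P)$ that does not affect the subsequent estimate $\|T^n-P\|_{X\to X}\le\|S^n\|_{R\to R}\,\|I-P\|_{X\to X}$.
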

	
Condition (ii) in Theorem \ref{T-Fact-1} is equivalent to the requirement that $\sigma(T;X)\subseteq\mathcal{U}:=\{z\in\C:\ |z|\leq 1\}$ and $\sigma(T;X)\cap\partial\mathcal{U}=\{1\}$, where $\partial\mathcal{U}$ denotes the boundary of $\mathcal{U}$. Condition (iii) in Theorem \ref{T-Fact-1} means that the eigenvalue $1$ of $T$ is \textit{semisimple}, that is, 
that $1$ is a first order pole of the resolvent of $T$.

Recall that $\omega=\C^{\N_0}$ is the Fr\'echet space of all complex sequences $x=(x_n)_{n\in\N_0}$ whose locally convex topology is generated by the increasing sequence of seminorms
\begin{equation}\label{eq.sem-omega}
	r_n(x)=\max_{0\leq j\leq n}|x_j|,\quad x=(x_n)_{n\in\N_0}\in\omega,
	\end{equation}	
	for each $n\in\N_0$. Given $x=(x_n)_{n\in\N_0}\in\omega$ we write $x\geq 0$ if $x=|x|$, where $|x|:=(|x_n|)_{n\in\N_0}$. By $x\leq z$ is meant that $(z-x)\geq 0$. Observe that $r_n(x)=r_n(|x|)\leq r_n(|y|)=r_n(y)$ whenever $x,y\in\omega$ satisfy $|x|\leq |y|$. The linear subspace of $\omega$ consisting of all vectors with only finitely many non-zero coordinates is denoted by $c_{00}$; it is dense in $\omega$.
		For each $r\in\N_0$ let $f_r\colon\omega\to\C$ be the linear functional  determined by $(\delta_{k,r})_{k\in\N_0}$, where $\delta_{r,r}=1
	$ and $\delta_{k,r}=0$ if $k\not=r$, that is, $f_r(x):=\langle x, (\delta_{k,r})_{k\in\N_0}\rangle=x_r$, for $x\in\omega$. Of course $f_r\in \omega'_\beta$ for each $r\in\N_0$. In particular, 
	 $\omega'_\beta={\rm span }\{f_r\,:\, r\in\N_0\}$. The set of vectors $\cE:=\{e_n\,:\ n\in\N_0\}$, where $e_n$ has $1$ in coordinate $n$ and $0$ elsewhere, is an (unconditional) Schauder basis for $\omega$. Of course, $c_{00}={\rm span}(\cE)$.
	
	A \textit{sequence space} is a linear subspace $X$ of $\omega$. It is a \textit{Banach sequence space} if it is endowed with a norm $\|\cdot\|_X\colon X\to [0,\infty)$ for which it is complete.
	In the event that the natural inclusion $X\subseteq \omega$ is \textit{continuous}, the Banach sequence space $(X,\|\cdot\|_X)$ is called a \textit{BK-space} (see \S 4.2 in \cite{Wi}, for example). This is equivalent to the continuity of the coordinate projections $P_n\colon X\to \C$, for each $n\in\N_0$, where
	\[
	P_n(x):=x_n,\quad x=(x_n)_{n\in\N_0}\in X,
	\]
	\cite[\S 4.0 (2)]{Wi}.
	Observe, for each $n\in\N_0$,  that the restriction $f_n|_{X}\colon X\to\C$ of $f_n$ from $\omega $ to $X$,  given by 
	\[
	f_n|_X\colon x\to x_n, \quad x\in X,
	\]
	coincides with $P_n$. By a \textit{basis} in a Banach space, we always mean a Schauder basis; see Definition 18.1 in \cite{Dow}, for example.
	
	\begin{lemma}\label{L-C} Let $X\subseteq \omega$ be a BK-space. Then $	f_n|_X\in X'$ for every $n\in\N_0$.
	\end{lemma}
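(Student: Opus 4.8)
The plan is to observe that this is essentially immediate from the definition of a BK-space combined with the identification of $f_n|_X$ with the coordinate projection $P_n$ already recorded just above the statement. First I would recall that, by definition, a BK-space is a Banach sequence space $(X,\|\cdot\|_X)$ for which the natural inclusion $X\subseteq\omega$ is continuous, and that (by \cite[\S 4.0(2)]{Wi}) this is equivalent to the continuity of each coordinate projection $P_n\colon X\to\C$. Since the excerpt already notes that $f_n|_X$ \emph{coincides} with $P_n$ as a map on $X$, the continuity of $P_n$ gives at once that $f_n|_X$ is a continuous linear functional on $X$, i.e. $f_n|_X\in X'$.

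If one wishes to argue the equivalence directly rather than quote it, I would proceed as follows. The inclusion map $j\colon X\to\omega$ is continuous, and $\omega$ carries the topology generated by the seminorms $r_n$ of \eqref{eq.sem-omega}; hence for each $n\in\N_0$ there is a constant $c_n>0$ with $r_n(x)\leq c_n\|x\|_X$ for all $x\in X$. In particular $|x_n|\leq r_n(x)\leq c_n\|x\|_X$, which is exactly the statement that the linear functional $x\mapsto x_n$, namely $f_n|_X$, is bounded on $X$, hence belongs to $X'$. (Alternatively, one can invoke the closed graph theorem: $f_n=f_r\big|_{r=n}\in\omega'_\beta$ is continuous on $\omega$, and composing with the continuous inclusion $j$ shows $f_n|_X=f_n\circ j$ is continuous on $X$ as a composition of continuous maps — no closed graph theorem is even needed once continuity of $j$ is granted.)

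There is no real obstacle here; the only thing to be careful about is not to confuse the abstract functional $f_n$ on $\omega$ with its restriction to $X$, and to make clear that the boundedness estimate comes from the continuity of the inclusion $X\subseteq\omega$, which is part of the hypothesis that $X$ is a BK-space. I would therefore keep the proof to one or two lines, simply pointing to the identification $f_n|_X=P_n$ and the characterisation of BK-spaces via continuity of the $P_n$.

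Here is the proof as I would write it.

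\begin{proof}
Since $X$ is a BK-space, the natural inclusion $j\colon X\to\omega$ is continuous. As the topology of $\omega$ is generated by the seminorms $r_n$ from \eqref{eq.sem-omega}, for each $n\in\N_0$ there exists $c_n>0$ with $r_n(x)\leq c_n\|x\|_X$ for all $x\in X$. In particular, $|f_n(x)|=|x_n|\leq r_n(x)\leq c_n\|x\|_X$ for every $x\in X$, so the linear functional $f_n|_X\colon X\to\C$ is bounded, i.e. $f_n|_X\in X'$. (Equivalently, $f_n|_X$ coincides with the coordinate projection $P_n\colon X\to\C$, which is continuous by the characterisation of BK-spaces, \cite[\S 4.0(2)]{Wi}.)
\end{proof}
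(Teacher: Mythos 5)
Your proof is correct and follows exactly the paper's route: the paper's own proof is the one-line observation that the claim "is a direct consequence of the fact that $X\subseteq\omega$ continuously," and your argument simply makes explicit the estimate $|f_n(x)|=|x_n|\leq r_n(x)\leq c_n\|x\|_X$ underlying that remark. Nothing is missing or superfluous.
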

	
	\begin{proof}
	This is a direct consequence of the fact that $X\subseteq \omega$ continuously.
	\end{proof}

Depending on the space $X$, it can happen that $f_n|_X=0$ for various $n\in\N_0$, that is, $\Ker(f_n|_X)=X$.

\begin{example}\label{E-S1}\rm  (i) Let $F$ be any non-empty, finite subset of $\N_0$. Define
	\[
	X:=\{x\in\omega\,:\, x_n=0\ {\rm for\ all \ } n\not\in F\}.
	\]
	For the norm $\|x\|_\infty:=\max_{n\in F}|x_n|$, for $x\in X$, it is clear that $X$ is a finite dimensional Banach sequence space. If $m\in\N_0\setminus F$, then $P_m(x)=0$ for $x\in X$ and so $P_m\colon X\to\C$ is surely continuous. If $m\in F$, then 
	\[
	|P_m(x)|=|x_m|\leq \max_{k\in F}|x_k|=\|x\|_\infty,\quad x\in X,
	\]
	and so again $P_m$ is continuous. Hence, $X$ is a BK-space. Note that $f_n|_X\in X'\setminus\{0\}$ if and only if $n\in F$. 	Since $X={\rm span}\{e_n\}_{n\in F}$, we see that $\cE\not\subseteq X$.

	(ii) Let $X={\rm span}\{\mathbbm{1}\}$ be equipped with the norm
	\[
	\|x\|_X:=\|\alpha \mathbbm{1}\|_X=|\alpha|,\quad x=\alpha \mathbbm{1}\in X\ ({\rm  with\ } \alpha\in\C),
	\]
	 where $\mathbbm{1}=(1,1,\ldots)$. Then $X$ is a BK-space, $e_n\not\in X$ for every $n\in\N_0$, and $f_n|_X\not=0$ for all $n\in\N_0$. Actually, $f_n|_X=\xi$ for every $n\in\N_0$, where
	\[
	\xi(x):=\xi(\alpha \mathbbm{1})=\alpha,\quad x=\alpha \mathbbm{1}\in X\ ({\rm  with\ } \alpha\in\C).
	\]
	(iii) Let $\Gamma:=\{2n+1\,:\, n\in\N_0\}$ and define
	\[
	X:=\{x\in\omega\,:\, x_n=0\ {\rm for\ all\ } n\in\Gamma \ {\rm and } \sup_{n\not\in\Gamma}|x_n|<\infty\}.
	\]
	For the norm
	\[
	\|x\|_\infty:=\sup_{n\in\N_0}|x_n|=\sup_{n\not\in \Gamma}|x_n|,\quad x\in X,
	\]
	it is clear that $X$ is a closed subspace of $\ell^\infty$ and so $X$ is a BK-space. Observe that $\cE\not\subseteq X$ but, $e_n\in X$ for all $n\not\in \Gamma$. Moreover, $f_n|_X=0$ for $n\in\Gamma$ and $f_n|_X=P_n\not=0$ for $n\not\in \Gamma$.
	\end{example}
	
	\section{General results for the generalized Cesàro operators $C_t$}
	
	It was shown in Theorem 6.6 of \cite{ABR-9}, for each $t\in [0,1)$ and for each Banach space $X\in\{d_p,\ell^p\,:\, 1\leq p<\infty\}\cup\{ces_p\,:\, 1<p<\infty\}$, that the generalized Cesàro operator $C_t\in \cL(X)$ is power bounded and uniformly mean ergodic, but not supercyclic. Our aim  is to extend these properties of $C_t$ to a \textit{significantly larger} class of Banach sequence spaces $X$. The following result is Theorem 6.1 in \cite{ABR-9}; see also its proof. We introduce the notation
	\[
	\Lambda:=\left\{\frac{1}{n+1}\, :\ n\in\N_0\right\} \mbox{ and } \Lambda_0:=\Lambda\cup\{0\}.
	\]
	
	\begin{theorem}\label{T-Fact 2} For each $t\in [0,1)$ consider the Cesàro operator $C_t^\omega\in \cL(\omega)$.
		\begin{itemize}
			\item[\rm (i)] $\Ker(I-C_t^\omega)={\rm span}\{x^{[0]}_t\}$ and ${\rm Im}(I-C_t^\omega)=\overline{{\rm span}\{e_r\,:\, r\in\N\}}$, where $x^{[0]}_t:=(t^n)_{n\in\N_0}$.
			\item[\rm (ii)] $\Ker(I-C_t^\omega)\cap {\rm Im}(I-C_t^\omega)=\{0\}$.
			\item[\rm (iii)] $\sigma_{pt}((C_t^\omega)';\omega_\beta')=\sigma((C_t^\omega)';\omega'_\beta)=\Lambda$.
			Moreover, 
			\end{itemize}
			\[
			(C_t^\omega)'z_t^{[n]}=\frac{1}{n+1}z^{[n]}_t,\quad n\in\N_0,
			\]
			where
			\begin{equation}\label{eq.Eigenvalues}
				z_t^{[n]}:=\sum_{i=0}^n(-1)^i\binom{n}{i}t^if_{n-i}\in \omega'_\beta\setminus\{0\}.
			\end{equation}
		
		\end{theorem}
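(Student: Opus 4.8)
The plan is to treat the three assertions separately, working throughout with the matrix description $(C_t^\omega x)_n=\frac{1}{n+1}\sum_{i=0}^{n}t^{n-i}x_i$.

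For (i) I would first determine $\Ker(I-C_t^\omega)$. The equation $C_t^\omega x=x$ is equivalent, coordinate by coordinate, to $\sum_{i=0}^{n-1}t^{n-i}x_i=nx_n$ for all $n$ (the case $n=0$ being vacuous), so $x_0$ is free and an induction on $n$ gives $x_n=t^nx_0$; hence $\Ker(I-C_t^\omega)={\rm span}\{x^{[0]}_t\}$ with $x^{[0]}_t=(t^n)_{n\in\N_0}$. For the range, note that $((I-C_t^\omega)x)_0=x_0-x_0=0$ for every $x$, so ${\rm Im}(I-C_t^\omega)\subseteq\{y\in\omega:y_0=0\}$; conversely, given $y$ with $y_0=0$ one solves $(I-C_t^\omega)x=y$ recursively by taking $x_0:=0$ and $x_n:=\frac1n\big((n+1)y_n+\sum_{i=0}^{n-1}t^{n-i}x_i\big)$ for $n\geq 1$, so equality holds. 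Since $\{y\in\omega:y_0=0\}=\Ker f_0$ is closed in $\omega$ and contains ${\rm span}\{e_r:r\in\N\}$ as a dense subspace, it equals $\overline{{\rm span}\{e_r:r\in\N\}}$, which proves (i). Assertion (ii) is then immediate: a nonzero element of $\Ker(I-C_t^\omega)$ has the form $cx^{[0]}_t$ with $c\neq 0$, whose zeroth coordinate is $c\neq 0$, hence it cannot lie in ${\rm Im}(I-C_t^\omega)=\{y:y_0=0\}$.

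For (iii) I would start from $\langle x,(C_t^\omega)'f_r\rangle=(C_t^\omega x)_r=\frac{1}{r+1}\sum_{i=0}^{r}t^{r-i}\langle x,f_i\rangle$, which (using $\omega'_\beta={\rm span}\{f_r:r\in\N_0\}$) yields $(C_t^\omega)'f_r=\frac{1}{r+1}\sum_{i=0}^{r}t^{r-i}f_i$. Thus $(C_t^\omega)'$ leaves each finite-dimensional subspace $V_N:={\rm span}\{f_0,\dots,f_N\}$ invariant and acts there by an upper-triangular matrix with diagonal $(\frac{1}{r+1})_{r=0}^{N}$; hence for $\lambda\notin\Lambda$ the operator $\lambda I-(C_t^\omega)'$ is a bijection of each $V_N$, and therefore a linear bijection of $\omega'_\beta=\bigcup_N V_N$. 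Since $\omega'_\beta$ carries its finest locally convex topology, every linear self-map of it is continuous, so this bijection is a topological isomorphism and $\lambda\in\rho((C_t^\omega)';\omega'_\beta)$. This gives $\sigma((C_t^\omega)';\omega'_\beta)\subseteq\Lambda$, and since $\sigma_{pt}\subseteq\sigma$ always, it remains only to prove $\Lambda\subseteq\sigma_{pt}((C_t^\omega)';\omega'_\beta)$ together with the eigenvector formula.

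For that last point I would verify directly that $z^{[n]}_t$ from \eqref{eq.Eigenvalues} satisfies $(C_t^\omega)'z^{[n]}_t=\frac{1}{n+1}z^{[n]}_t$. Writing $z^{[n]}_t=\sum_{j=0}^{n}(-1)^{n-j}\binom{n}{j}t^{n-j}f_j$ (re-indexing the given sum) and inserting the formula for $(C_t^\omega)'f_j$, a comparison of the coefficient of $f_i$ on the two sides reduces the claim, for each $0\leq i\leq n$, to $t^{n-i}$ times the scalar identity $\sum_{j=i}^{n}\frac{(-1)^{n-j}}{j+1}\binom{n}{j}=\frac{(-1)^{n-i}}{n+1}\binom{n}{i}$; this in turn follows from $\frac{1}{j+1}\binom{n}{j}=\frac{1}{n+1}\binom{n+1}{j+1}$, from $\sum_{l=0}^{n+1}(-1)^l\binom{n+1}{l}=0$, and from the telescoping identity $\sum_{l=0}^{i}(-1)^l\binom{n+1}{l}=(-1)^i\binom{n}{i}$. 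Since the coefficient of $f_n$ in $z^{[n]}_t$ equals $1$, we have $z^{[n]}_t\in\omega'_\beta\setminus\{0\}$, so $\frac{1}{n+1}\in\sigma_{pt}((C_t^\omega)';\omega'_\beta)$ for every $n\in\N_0$, completing (iii). The only step needing genuine care is the topological one in (iii) — that algebraic bijectivity of $\lambda I-(C_t^\omega)'$ already forces $\lambda$ into the resolvent set — which rests on $\omega'_\beta$ carrying the finest locally convex topology; everything else is elementary linear algebra together with a short binomial-coefficient computation.
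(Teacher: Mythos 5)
Your proposal is correct. Note that the paper itself offers no proof of this statement: it is quoted verbatim from Theorem 6.1 of \cite{ABR-9}, so there is no internal argument to compare against. Your reconstruction is a complete, self-contained substitute, and each of its three components checks out. The recursions in (i) are right: $C_t^\omega x=x$ reduces to $nx_n=\sum_{i=0}^{n-1}t^{n-i}x_i$, whence $x_n=t^nx_0$ by induction, and the explicit recursive solution of $(I-C_t^\omega)x=y$ for any $y$ with $y_0=0$ shows the range is exactly $\Ker f_0=\overline{{\rm span}\{e_r: r\in\N\}}$; (ii) then follows by looking at the zeroth coordinate. In (iii), the formula $(C_t^\omega)'f_r=\frac{1}{r+1}\sum_{i=0}^{r}t^{r-i}f_i$ is correct, the upper-triangular action on $V_N$ with diagonal $(\frac{1}{r+1})_{r=0}^N$ gives algebraic bijectivity of $\lambda I-(C_t^\omega)'$ for $\lambda\notin\Lambda$, and the step you rightly single out as the delicate one is sound: $\omega'_\beta$ is the locally convex direct sum $\bigoplus_{\N_0}\C$, which carries the finest locally convex topology, so every linear map out of it is continuous and algebraic invertibility already places $\lambda$ in the resolvent set (with the paper's definition of $\rho$). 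The binomial computation also verifies: using $\frac{1}{j+1}\binom{n}{j}=\frac{1}{n+1}\binom{n+1}{j+1}$, the vanishing of the full alternating sum of $\binom{n+1}{\cdot}$, and $\sum_{l=0}^{i}(-1)^l\binom{n+1}{l}=(-1)^i\binom{n}{i}$, one gets $\sum_{j=i}^{n}\frac{(-1)^{n-j}}{j+1}\binom{n}{j}=\frac{(-1)^{n-i}}{n+1}\binom{n}{i}$, which is exactly the coefficient identity needed for $(C_t^\omega)'z_t^{[n]}=\frac{1}{n+1}z_t^{[n]}$; since the coefficient of $f_n$ in $z_t^{[n]}$ is $1$, the eigenvectors are nonzero and $\Lambda\subseteq\sigma_{pt}((C_t^\omega)';\omega'_\beta)\subseteq\sigma((C_t^\omega)';\omega'_\beta)\subseteq\Lambda$ closes the argument.
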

		
		More interesting examples of BK-spaces than those of Example 	\ref{E-S1} and
		which are relevant for the operators $C_t$, $t\in [0,1)$, will come soon.
		
		Let $X\subseteq \omega$ be a Banach sequence space and $t\in [0,1]$. We say that \textit{$C_t$ exists in $X$} if $C_t^\omega(X)\subseteq X$. For each $x\in X$ it is clear from \eqref{Ces-op} that $C_tx\in \omega$. What is required is that actually  $C_tx\in X$.
		
		\begin{remark}\label{R-Uguale} \rm Suppose that $C_t$ exists in $X$. Then  the restriction of $C_t^\omega$ to $X$, denoted by
			\begin{equation}\label{eq.tre}
				C_t^\omega|_X=C_t,
			\end{equation}
			satisfies $C_t^\omega x=C_tx$ for all $x\in X$.
			\end{remark}
		
		The following observation follows from Remark \ref{remark2A}.
		
		\begin{lemma}\label{L-1-Sez2} Let $X\subseteq \omega$ be a BK-space.
			Whenever $C_t$ exists in $X$ for some $t\in [0,1]$, then actually $C_t\in \cL(X)$.
		\end{lemma}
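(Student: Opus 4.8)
The plan is to invoke the closed graph theorem in the form recorded in Remark \ref{remark2A}. Since $X$ is a BK-space, the natural inclusion $X\subseteq\omega$ is continuous by definition, and $\omega$ is a Fr\'echet space, so $C_t^\omega\in\cL(\omega)$ is at our disposal. By hypothesis $C_t$ exists in $X$, meaning $C_t^\omega(X)\subseteq X$, so we may consider the restriction $C_t:=C_t^\omega|_X\colon X\to X$, which is linear.

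To apply the closed graph theorem we must know that $X$ itself is a Banach (hence Fr\'echet) space, which is part of the definition of a BK-space, so the closed graph theorem is valid for linear maps into $X$. It then suffices to check that $C_t\colon X\to X$ has closed graph. Following Remark \ref{remark2A} verbatim with $Y=X$ and $T=C_t^\omega$: take a net (or, since $X$ is metrizable, a sequence) $x^{(k)}\to 0$ in $X$ with $C_tx^{(k)}\to z$ in $X$ for some $z\in X$. Because the inclusion $X\subseteq\omega$ is continuous, $x^{(k)}\to 0$ in $\omega$ as well, and likewise $C_tx^{(k)}=C_t^\omega x^{(k)}\to z$ in $\omega$. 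Since $C_t^\omega\in\cL(\omega)$ is continuous on $\omega$, we also have $C_t^\omega x^{(k)}\to C_t^\omega 0=0$ in $\omega$; by uniqueness of limits in the Hausdorff space $\omega$ this forces $z=0$. Hence the graph of $C_t$ is closed, and the closed graph theorem yields $C_t\in\cL(X)$.

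There is essentially no obstacle here: the lemma is a direct packaging of Remark \ref{remark2A} (which was itself stated precisely so that this step would be ``automatic''), combined with the two defining features of a BK-space, namely completeness and continuity of the inclusion into $\omega$. The only point requiring the slightest care is making sure the hypotheses of Remark \ref{remark2A} are genuinely met --- that $X$ carries a complete (metrizable) topology so the closed graph theorem applies, and that $\omega$ is Hausdorff so that limits are unique --- both of which hold by construction. Accordingly I would simply write: \emph{By definition a BK-space is a Banach space, hence Fr\'echet, and the inclusion $X\subseteq\omega$ is continuous; since $C_t$ exists in $X$ we have $C_t^\omega(X)\subseteq X$, and $C_t^\omega\in\cL(\omega)$. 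Remark \ref{remark2A}, applied with $Y=X$ and $T=C_t^\omega$, gives $C_t=C_t^\omega|_X\in\cL(X)$.} \Bo
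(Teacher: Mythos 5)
Your proof is correct and is exactly the paper's argument: the paper states that the lemma ``follows from Remark \ref{remark2A}'', i.e.\ the closed graph theorem applied to the restriction $C_t^\omega|_X$, using that a BK-space is a Banach space with continuous inclusion into $\omega$ and that $C_t^\omega\in\cL(\omega)$. You have merely unpacked the details of that remark, so there is nothing to add.
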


		In view of Lemma \ref{L-1-Sez2} we write $C_t\in \cL(X)$ to mean that $C_t$ exists in $X$.
		
		The norm $\|\cdot\|$ in a Banach sequence space $X\subseteq \omega$ is said to be a \textit{Riesz norm} if $\|x\|\leq \|y\|$ whenever $x,y\in X$ satisfy $|x|\leq |y|$ in $\omega$. If, in addition, $|x|\in X$ whenever $x\in X$, then $\|\cdot\|$ is called an \textit{absolute Riesz norm} in $X$. In this case, $\|x\|=\||x|\|$ for every $x\in X$. Indeed, given $x\in X$, note that $|x|\leq |\, |x| \,|$ and so $\|x\|\leq \||x|\|$. On the other hand, since $ |\, |x| \,|\leq |x|$, also $\||x|\|\leq \|x\|$. Hence, $\|x\|=\||x|\|$. 
		A Banach sequence space $X\subseteq \omega$ is said to be \textit{solid} (or an order ideal) if, whenever $x\in\omega$ and $y\in X$ satisfy $|x|\leq |y|$, then $x\in X$. In this case, $y\in X$ implies that $|y|\in X$. Indeed, $x:=|y|\in \omega$ satisfies $|x|\leq |y|$ and so $x\in X$, that is, $|y|\in X$.
		The norm in the BK-space $cs$ (see Section 6) is \textit{not} an absolute Riesz norm. For instance, $x=\left(\frac{(-1)^n}{(n+1)}\right)_{n\in\N_0}\in cs$, but $|x|\not\in cs$.  Actually, $\|\cdot\|_{cs}$ is not even a Riesz norm. Indeed, for $z=\left(\frac{1}{(n+1)^2}\right)_{n\in\N_0}\in cs$, note that $|z|\leq |x|$ in $\omega$, but $\|z\|_{cs}=\frac{\pi^2}{6}>\log 2=\|x\|_{cs}$.

		The following result implies under certain conditions that $C_t$ exists in $X$ for every $t\in [0,1)$.
		
		\begin{prop}\label{P-Nuova-1} Let $X\subseteq \omega$ be a Banach sequence space which is solid and has an absolute Riesz norm.
		If $C_1\in \cL(X)$, then also $C_t\in \cL(X)$ and $\|C_t\|_{X\to X}\leq \|C_1\|_{X\to X}$ for every $t\in [0,1)$.
			\end{prop}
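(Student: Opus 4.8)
The plan is to reduce the whole statement to one elementary pointwise domination in $\omega$, and then let solidity and the absolute Riesz norm do the rest. The key inequality I would establish first is that, for every $x\in\omega$ and every $t\in[0,1)$,
\begin{equation}\label{eq.domCt}
|C_t^\omega x|\le C_1^\omega|x|\qquad\text{in }\omega .
\end{equation}
Indeed, fix $n\in\N_0$. By \eqref{Ces-op} the $n$-th coordinate of $C_t^\omega x$ is $\frac{1}{n+1}\sum_{k=0}^n t^{\,n-k}x_k$, and since $0\le t^{\,n-k}\le 1$ for $0\le k\le n$ (recall $t\in[0,1)$), the triangle inequality gives
\[
|(C_t^\omega x)_n|\le\frac{1}{n+1}\sum_{k=0}^n t^{\,n-k}|x_k|\le\frac{1}{n+1}\sum_{k=0}^n |x_k|=(C_1^\omega|x|)_n ,
\]
which is \eqref{eq.domCt}.

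Next I would invoke the two hypotheses on $X$ separately, since each plays a distinct role. For \emph{membership}: given $x\in X$, the norm of $X$ being an absolute Riesz norm yields $|x|\in X$, and since $C_1\in\cL(X)$ we get $C_1^\omega|x|=C_1|x|\in X$, a sequence that is $\ge 0$. As $X$ is solid and $|C_t^\omega x|\le C_1|x|$ by \eqref{eq.domCt}, it follows that $C_t^\omega x\in X$. Since $x\in X$ was arbitrary, $C_t$ exists in $X$, and hence $C_t\in\cL(X)$ by Lemma \ref{L-1-Sez2}.

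Finally, for the \emph{norm estimate}: for $x\in X$, using that $\|\cdot\|$ is an absolute Riesz norm (so $\|y\|=\||y|\|$) together with the Riesz property applied to $|C_tx|\le C_1|x|=|C_1|x||$,
\[
\|C_tx\|=\bigl\|\,|C_tx|\,\bigr\|\le\bigl\|C_1|x|\bigr\|\le\|C_1\|_{X\to X}\,\bigl\|\,|x|\,\bigr\|=\|C_1\|_{X\to X}\,\|x\| ,
\]
and taking the supremum over $\|x\|\le 1$ gives $\|C_t\|_{X\to X}\le\|C_1\|_{X\to X}$.

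I do not expect a genuine obstacle here: the entire content sits in the pointwise bound \eqref{eq.domCt}, and the only point requiring care is bookkeeping of the hypotheses — solidity is exactly what places $C_t^\omega x$ back inside $X$, while the absolute Riesz norm is what both supplies $|x|\in X$ and converts the pointwise domination into the operator-norm comparison. (One could also skip the appeal to Lemma \ref{L-1-Sez2} altogether, since the displayed norm estimate already shows $C_t$ is a bounded operator on $X$; I would keep the reference for uniformity with the rest of the paper.)
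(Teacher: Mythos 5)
Your proposal is correct and follows essentially the same route as the paper's proof: the pointwise domination $|C_tx|\le C_1|x|$ (which the paper writes as $|C_tx|\le C_t|x|\le C_1|x|$), solidity for membership, and the absolute Riesz norm for the operator-norm comparison. The only cosmetic difference is that you separate the membership and norm steps more explicitly and note that Lemma \ref{L-1-Sez2} is dispensable, both of which are fine.
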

		
		\begin{proof} Fix $t\in [0,1)$. Then,  for each $x\in X$, we have from \eqref{Ces-op}, \eqref{Ces-1} and the definition of $|\cdot|$ in $\omega$ that
		$|C_tx|\leq C_t|x|\leq C_1|x|$. Since $|x|\in X$, we have that $C_1|x|\in X$.
		By the assumptions on the norm $\|\cdot\|$ and the fact that $X$ is solid,  it follows, for each $x\in X$, that
		\[
		\|C_tx\|=\||C_tx|\|\leq \|C_1|x|\|\leq \|C_1\|_{X\to X}\||x|\|=\|C_1\|_{X\to X}\|x\|.
		\]
		This yields $C_t\in \cL(X)$ with $\|C_t\|_{X\to X}\leq \|C_1\|_{X\to X}$.
		\end{proof}
		
		Let $X\subseteq \omega$ be a Banach sequence space with $c_{00}\subseteq X$ and having an absolute Riesz  norm $\|\cdot\|$. Then $X$ is a BK-space. Indeed, fix $n\in\N_0$. Since $|x_ke_k|\leq |x|$, for each $0\leq k\leq n$ and $x\in X$, it follows that $|x_k|\cdot\|e_k\|=\|x_k e_k\|\leq \|x\|$, that is, $|x_k|\leq (1/\|e_k\|)\|x\|$. Then \eqref{eq.sem-omega} yields that $r_n(x)\leq (\max_{0\leq k\leq n}\frac{1}{\|e_k\|})\|x\|$, for every $x\in X$. Since $n\in\N_0$ is arbitrary, the inclusion $X\subseteq \omega$ is continuous and so $X$ is a BK-space.
		
		The closed unit ball of a Banach space $X$ is denoted by $\cU_X$.
		A partial converse of the previous result is the following one.
		
		\begin{prop}\label{P-Nuova-2} Let $X\subseteq \omega$ be a BK-space such that  $\cU_X$ is compact  (equivalently, closed) in $\omega$. Suppose that $C_t\in \cL(X)$ for all $t\in [0,1)$ and 
			\[
			M:=\sup_{t\in [0,1)}\|C_t\|_{X\to X}<\infty.
			\]
			Then $C_1$ exists in $X$ and $\|C_1\|_{X\to X}\leq M$.
			\end{prop}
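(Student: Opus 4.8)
The plan is to exploit the fact that, for each fixed $x\in X$, the limit $\lim_{t\to1^-}C_t^\omega x=C_1^\omega x$ holds in $\omega$ (coordinatewise), combined with the hypothesis that the scaled unit ball $M\cU_X$ is a closed subset of $\omega$.

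First I would fix $x\in\cU_X$. For every $t\in[0,1)$, the hypothesis $C_t\in\cL(X)$ gives $C_t^\omega x=C_tx\in X$ with $\|C_tx\|_X\le\|C_t\|_{X\to X}\|x\|_X\le M$, so that $C_t^\omega x\in M\cU_X$ for all $t\in[0,1)$. Moreover, for each fixed $n\in\N_0$ the $n$-th coordinate
\[
(C_t^\omega x)_n=\frac{t^nx_0+t^{n-1}x_1+\cdots+x_n}{n+1}
\]
depends polynomially (hence continuously) on $t$, so $(C_t^\omega x)_n\to(C_1^\omega x)_n$ as $t\to1^-$. Since convergence in $\omega$ is precisely coordinatewise convergence, this yields $C_t^\omega x\to C_1^\omega x$ in $\omega$ as $t\to1^-$.

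Then I would invoke the hypothesis on $\cU_X$: being compact in the Hausdorff space $\omega$, the set $\cU_X$ is closed in $\omega$; since $z\mapsto Mz$ is a homeomorphism of $\omega$, also $M\cU_X$ is compact, hence closed, in $\omega$. As $(C_t^\omega x)_{t\in[0,1)}\subseteq M\cU_X$ converges in $\omega$ to $C_1^\omega x$, closedness forces $C_1^\omega x\in M\cU_X\subseteq X$; that is, $C_1^\omega x\in X$ with $\|C_1^\omega x\|_X\le M$. Since $x\in\cU_X$ was arbitrary, homogeneity upgrades this to $\|C_1^\omega x\|_X\le M\|x\|_X$ for all $x\in X$. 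Hence $C_1^\omega(X)\subseteq X$, i.e. $C_1$ exists in $X$ (so that $C_1\in\cL(X)$ by Lemma \ref{L-1-Sez2}), and $\|C_1\|_{X\to X}\le M$.

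I do not expect a genuine obstacle here; the only delicate point is that the passage $t\to1^-$ is available a priori only in the topology of $\omega$ (coordinatewise), not in the norm of $X$ --- which is exactly why the compactness/closedness assumption on $\cU_X$ is indispensable: it is what guarantees that the $\omega$-limit $C_1^\omega x$ lands back inside $X$ with the required norm control.
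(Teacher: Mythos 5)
Your proof is correct and follows essentially the same route as the paper's own argument: coordinatewise (hence $\omega$-) convergence $C_tx\to C_1x$ as $t\to 1^-$, containment of the family $\{C_tx\}_{t\in[0,1)}$ in $M\cU_X$, and the closedness of $M\cU_X$ in $\omega$ to conclude $C_1x\in M\cU_X\subseteq X$. The only cosmetic difference is that you make the homogeneity step explicit, whereas the paper passes directly from $\|C_1x\|\leq M$ on $\cU_X$ to $C_1\in\cL(X)$.
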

		
		\begin{proof} By assumption $\|C_tx\|\leq M\|x\|$ for each $x\in X$ and $t\in [0,1)$. Moreover, for each $n\in\N_0$, it follows from \eqref{Ces-op} and \eqref{Ces-1} that
			\[
			\lim_{t\to 1^-}(C_tx)_n=(C_1x)_n, \ \ \forall x\in X.
			\]
			Clearly, this implies that 
			$
			\lim_{t\to 1^-}C_tx=C_1x$ in $\omega$ for every $x\in X$; see \eqref{eq.sem-omega}.
			
			Fix $x\in \cU_X$, in which case   $C_tx\in M\cU_X$ for each $t\in [0,1)$. Since  $\cU_X$ (hence, also $M\cU_X$) is  compact in $\omega$, it follows that
			\[
			C_1x=\lim_{t\to 1^-}C_tx\in M \overline{\cU_X}^\omega=M\cU_X\subseteq X,
			\]
			where the limit is taken in $\omega$. Therefore, $\|C_1x\|\leq M\|x\|\leq M$. 
			It follows that $C_1\in \cL(X)$ and $\|C_1\|_{X\to X}\leq M$.
			
			Since $\omega$ is a Montel space, \cite[p.155 and \S 27.2]{23}, and $\cU_X$ is a bounded set in $\omega$ (as $X$ is a BK-space), the set $\cU_X$ is compact in $\omega$ if and only if it is closed in $\omega$.
			\end{proof}
		
		The space 
		\[
		d_1:=\left\{x\in\ell^\infty\, :\, \hat{x}:=(\sup_{k\geq n}|x_k|)_{n\in\N_0}\in\ell^1\right\}
		\]
		is a Banach lattice for the norm $\|x\|_{d_1}:=\|\hat{x}\|_1$ and the order induced from $\omega$, \cite{Be}. Since $|x|\leq |\hat{x}|$ for $x\in\ell^\infty$, it is clear that $\|x\|_1\leq \|{x}\|_{d_1}$, that is, $d_1\subseteq \ell^1\subseteq \omega$ with continuous inclusions.
		For each $t\in [0,1)$ define the following sequence of vectors $x_t^{[m]}$, for every   $m\in\N_0$, which are known to  belong to $d_1$, \cite[Lemma 3.6(ii)]{CR4};
		\begin{equation}\label{eq.EigenvalueC}
			\left\{\begin{array}{ll}
				 x_t^{[0]}&=(t^n)_{n\in\N_0}=(1,t,t^2,\ldots)\\
				(x_t^{[m]})_j&=0\ {\rm if\ } 0\leq j<m,\ (x_t^{[m]})_m=1,\ (x_t^{[m]})_{m+n}=\frac{\prod_{i=1}^n(m+i)}{n!}t^n,\ n\geq 1.
				\end{array}\right.
			\end{equation}
Given any $t\in [0,1)$ the operator $C_t^\omega\in \cL(\omega)$ satisfies (cf. \cite[Lemma 3.6(i)]{CR4})
\begin{equation}\label{eq.AutovaloriC}		
	C_t^\omega x_t^{[m]}=\frac{1}{(m+1)}x_t^{[m]},\quad m\in\N_0.
	\end{equation}

	\begin{lemma}\label{L-2Sez2} Let $X\subseteq \omega$ be a BK-space and $t\in [0,1)$ satisfy $C_t\in \cL(X)$.
		\begin{itemize}
			\item[\rm (i)] $\sigma_{pt}(C_t;X)=\{\frac{1}{m+1}\,:\, m\in\N_0  \mbox{ and } x_t^{[m]}\in X\}\subseteq \Lambda$.
			\item[\rm (ii)] Suppose that $C_t\in \cL(X)$ is a compact operator.
			\begin{itemize}
				\item[\rm (a)] If $\sigma_{pt}(C_t;X)$ is a finite set and $0\not\in \sigma(C_t;X)$, then $\sigma(C_t;X)=\sigma_{pt}(C_t;X)$.
				\item[\rm (b)] If $\sigma_{pt}(C_t;X)$ is an infinite set, then $\sigma(C_t;X)=\sigma_{pt}(C_t;X)\cup\{0\}$.
			\end{itemize}
		\end{itemize}
		\end{lemma}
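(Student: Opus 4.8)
The plan is to obtain (i) from the reduction of the point spectrum to $\omega$ provided by Proposition~\ref{P.SimpleFact1}(i), and to obtain (ii) from the classical Riesz--Schauder spectral theory of compact operators on a Banach space; the explicit description of the eigenvectors of $C_t^\omega$ in $\omega$ is what ties the two parts together.

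For (i), recall from Remark~\ref{R-Uguale} that $C_t=C_t^\omega|_X$, so that \eqref{eq.prop} in Proposition~\ref{P.SimpleFact1}(i), applied with $Y:=X$ and $T:=C_t^\omega$, says that a scalar $\lambda$ lies in $\sigma_{pt}(C_t;X)$ precisely when $X$ contains some $y\in\omega\setminus\{0\}$ with $C_t^\omega y=\lambda y$. Hence it suffices to determine all eigenvectors of $C_t^\omega$ in $\omega$. By \eqref{eq.AutovaloriC} the vector $x_t^{[m]}$ is an eigenvector for the value $\tfrac1{m+1}$, so $\tfrac1{m+1}\in\sigma_{pt}(C_t;X)$ as soon as $x_t^{[m]}\in X$. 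For the reverse inclusion I would write the equation $C_t^\omega y=\lambda y$ out coordinate-wise via \eqref{Ces-op}, getting $\sum_{k=0}^n t^{n-k}y_k=\lambda(n+1)y_n$ for all $n\in\N_0$; the case $n=0$ forces either $\lambda=1$ (and then $y\in{\rm span}\{x_t^{[0]}\}$ by Theorem~\ref{T-Fact 2}(i)) or $y_0=0$, while subtracting the relation for $n-1$, scaled by $t$, from that for $n$ yields the first-order recursion $(\lambda(n+1)-1)\,y_n=t\lambda n\,y_{n-1}$. A short inspection of this recursion shows that $\lambda\notin\Lambda$ forces $y=0$, whereas for $\lambda=\tfrac1{m+1}$ it forces $y_0=\dots=y_{m-1}=0$ with all later coordinates determined by $y_m$, so that $\Ker(\lambda I-C_t^\omega)={\rm span}\{x_t^{[m]}\}$, in agreement with \cite[Lemma~3.6]{CR4}. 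Combining this with \eqref{eq.prop} gives $\sigma_{pt}(C_t;X)=\{\tfrac1{m+1}:m\in\N_0,\ x_t^{[m]}\in X\}\subseteq\Lambda$.

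For (ii), assume $C_t\in\cL(X)$ is compact. By the Riesz--Schauder theory every non-zero point of $\sigma(C_t;X)$ is an eigenvalue, i.e. $\sigma(C_t;X)\setminus\{0\}=\sigma_{pt}(C_t;X)\setminus\{0\}$; since $\sigma_{pt}(C_t;X)\subseteq\Lambda$ by (i) and $0\notin\Lambda$, this reads $\sigma(C_t;X)\setminus\{0\}=\sigma_{pt}(C_t;X)$. Consequently $\sigma(C_t;X)$ equals either $\sigma_{pt}(C_t;X)$ or $\sigma_{pt}(C_t;X)\cup\{0\}$, according as $0\notin\sigma(C_t;X)$ or $0\in\sigma(C_t;X)$; this is precisely the dichotomy (a)/(b) once we check that $0\in\sigma(C_t;X)$ in case (b). But if $\sigma_{pt}(C_t;X)$ is infinite it is an infinite subset of $\Lambda$, hence accumulates at $0$ because $\tfrac1{n+1}\to0$; as $\sigma(C_t;X)$ is closed and contains $\sigma_{pt}(C_t;X)$, it contains $0$ as well. (Equivalently, an infinite point spectrum forces $X$ to be infinite-dimensional, and then $0\in\sigma(C_t;X)$ by Proposition~\ref{P.SimpleFact2}(i).)

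All of the steps are routine; the only point needing genuine care is the coordinate-wise analysis in (i) — checking that the recursion reproduces exactly the vectors $x_t^{[m]}$ of \eqref{eq.EigenvalueC} and nothing more — together with keeping the role of the point $0$ straight in the two sub-cases of (ii). I do not anticipate a real obstacle beyond this bookkeeping.
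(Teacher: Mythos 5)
Your proposal is correct and follows essentially the same route as the paper: part (i) via the restriction principle of Proposition \ref{P.SimpleFact1}(i) combined with the known eigenstructure of $C_t^\omega$ in $\omega$, and part (ii) via Riesz--Schauder theory together with the observation that an infinite subset of $\Lambda$ accumulates at $0$. The only difference is cosmetic: you verify the one-dimensionality of the eigenspaces $\Ker(\tfrac{1}{m+1}I-C_t^\omega)={\rm span}\{x_t^{[m]}\}$ by the coordinate recursion, whereas the paper simply quotes this from \cite[Lemma 3.6]{CR4}.
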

	
		\begin{proof}
			(i)  Proposition \ref{P.SimpleFact1}(i) and Remark \ref{remark2A} imply (for $X:=\omega$ and $Y$ equal to our  given BK-space $X$ and $T:=C_t^\omega\in \cL(\omega)$ there) that
			\[
			\sigma_{pt}(C^\omega_t|_{Y_\omega};Y_\omega)=\left\{\frac{1}{m+1}\,:\, m\in\N_0\mbox{ and } x_t^{[m]}\in Y_\omega\right\},
			\]
			where $Y_\omega$ denotes $Y$ equipped with relative topology from $\omega$. Since $C_t\in \cL(Y)$, by assumption, and $\sigma_{pt}(C^\omega_t|_{Y_\omega};Y_\omega)=\sigma_{pt}(C_t;Y)$, the proof is complete.
			
			(ii)(a) By part (i), $0\not\in \sigma_{pt}(C_t;X)$.
			Since $C_t\in\cL(X)$ is compact,  every non-zero point in the spectrum of $C_t$ is an eigenvalue. By assumption $0\not\in \sigma(C_t;X)$. It follows that $\sigma(C_t;X)=\sigma_{pt}(C_t;X)$.
			
			(b) Part (i) implies that $0$ is a limit point of $\sigma_{pt}(C_t;X)$.   From the theory of compact operators it follows  that $\sigma(C_t;X)=\sigma_{pt}(C_t;X)\cup\{0\}$.
		\end{proof}
		
		\begin{example}\label{Esempio2}\rm (i) Let $m_1,\ldots, m_k$ be distinct numbers in $\N_0$ with $k\geq 1$. Fix any $t\in [0,1)$. Let $X:={\rm span}\{x_t^{[m_1]},\ldots, x_t^{[m_k]}\}\subseteq \ell^1$. Since coordinate $m_j$ of $x_t^{[m_j]}$ is $1$, for each $j\in \{1,\ldots,k\}$, it is routine to check that $X$ is a $k$-dimensional subspace of $\ell^1$. Equipped with the norm $\|\cdot\|_{\ell^1}$ it is clear that $X$ is a closed subspace of $\ell^1$. Since $\ell^1\subseteq \omega$ continuously, also $X\subseteq \omega$ continuously. So, $X$ is a BK-space.
			
			It follows from \eqref{eq.tre} and \eqref{eq.AutovaloriC} that $C_t(X)\subseteq X$, that is, $C_t\in \cL(X)$. Since ${\rm dim}(X)=k$ and $\frac{1}{1+m_j}$, for $1\leq j\leq k$, are distinct eigenvalues of $C_t$ (see Lemma \ref{L-2Sez2}(i)) it follows that 
			\[
			\sigma(C_t;X)=\sigma_{pt}(C_t;X)=\left\{\frac{1}{1+m_j}\,:\, 1\leq j\leq k\right\}.
			\]
			In this case, $0\not\in \sigma(C_t;X)$ and $C_t$ is compact (as ${\rm dim}(X)<\infty$).
			
			(ii) Fix any $r\in\N_0$. Let $X$ denote the \textit{closure}, in $\ell^\infty$ say, of ${\rm span}\{x_t^{[m]}\,:\, m>r\}$, where $t\in [0,1)$ is fixed. Since $\ell^\infty\subseteq \omega$ continuously also $X\subseteq \omega$ continuously. Hence, $X$ is a BK-space. Since $x_t^{[m]}$ has coordinate $m$ equal to $1$ for all $m\geq 0$ and, for $m\geq 1$, all coordinates $j$ for $0\leq j<m$ are $0$ (see \eqref{eq.EigenvalueC}), it follows that $\{x_t^{[m]}\,:\, m>r\}$ is a linearly independent set in $\ell^\infty$ and hence, $X$ is infinite dimensional. Denoting $C_t\in \cL(\ell^\infty)$ by $C_t^\infty$ it is known that  $\|C_t^\infty\|_{\ell^\infty\to \ell^\infty}=1$ and $C_t^\infty\in \cL(\ell^\infty)$ is compact; see Corollary 6.1 and Lemma 6.1 in \cite{SEl-S}. Since $C_t^\infty({\rm span}\{x_t^{[m]}\,:\, m>r\})\subset X$ and the operator $C_t^\infty\in \cL(\ell^\infty)$, it follows that $C_t^\infty(X)\subseteq X$ as ${\rm span}\{x_t^{[m]}\,:\, m>r\}$ is dense in $X$. Hence, the restriction $C_t$ of $C^\infty_t$ to $X$ satisfies $C_t\in \cL(X)$. Moreover, since $C_t^\infty\in \cL(\ell^\infty)$ is compact and $X$ is a $C_t^\infty$-invariant subspace of $C_t^\infty\in \cL(\ell^\infty)$, also $C_t\in \cL(X)$ is compact; see Proposition \ref{P.SimpleFact2}(i). Noting that $X\subseteq \{x\in\ell^\infty\,:\, x_j=0\ \forall\, 0\leq j\leq r\}$ it follows that $x_t^{[m]}\in X$ if and only if $m>r$. Then Lemma \ref{L-2Sez2}(i)-(ii)(b) implies that 
			\[
			\sigma_{pt}(C_t;X)=\left\{\frac{1}{m+1}\,:\, m>r\right\}\ {\rm and}\ \sigma(C_t;X)=\sigma_{pt}(C_t;X)\cup\{0\}.
			\]
			Observe, if $r\geq 1$, then $1\not\in \sigma_{pt}(C_t;X)$ and so $1\not\in \sigma(C_t;X)$. Hence, Theorem \ref{T-Fact-1} does \textit{not} apply to $C_t\in \cL(X)$. However, $\|C_t^\infty\|_{\ell^\infty\to\ell^\infty}=1$ implies that $\|C_t\|_{X\to X}\leq 1$ and so $C_t\in \cL(X)$ is power bounded.
			
			It is shown in Section 6 of \cite{SEl-S} that 
				\[
				\sigma_{pt}(C_t^\infty;\ell^\infty)=\Lambda\ \mbox{ and }\ \sigma(C_t^\infty;\ell^\infty)=\Lambda_0.
				\]
				Hence, $1\in \sigma(C_t^\infty;\ell^\infty)$ and $\sigma(C_t^\infty;\ell^\infty)\setminus\{1\}\subseteq \overline{B(0,1/2)}$. For $x\in \Ker (I-C_t^\infty)\cap {\rm Im}(I-C_t^\infty)$ we see (as $\ell^\infty\subseteq \omega)$) that also 
				$x\in \Ker (I-C_t^\omega)\cap {\rm Im}(I-C_t^\omega)$.
				In the proof of Theorem 6.1 in \cite{ABR-9} it is shown (see also Theorem \ref{T-Fact 2} above) that
				\[
				\Ker (I-C_t^\omega)\cap {\rm Im}(I-C_t^\omega)=\{0\}
				\]
				and so $x=0$. Therefore, 
				\[
				\Ker (I-C_t^\infty)\cap {\rm Im}(I-C_t^\infty)=\{0\}.
				\]
				Applying Theorem \ref{T-Fact-1} to $T:=C_t^\infty$ in $X:=\ell^\infty$ we conclude that $C_t^\infty$ is power bounded (which is clear from $\|C_t^\infty\|_{\ell^\infty\to\ell^\infty}=1$) and uniformly mean ergodic. But, an examination of the proof of Theorem 6.2 in \cite{ABR-9} (see also its statement, since $\sigma(C_t;X)\subseteq B(0,\delta)$ for some $\delta\in (0,1)$) reveals that it is shown there that the Cesàro means
				$(C_t^\infty)_{[n]}\to 0$  in $\ \cL_b(\ell^\infty)$ for $n\to\infty$.
				That is, 
				\[
				\|(C_t^\infty)_{[n]}\|_{\ell^\infty\to\ell^\infty}\to 0\ \mbox{ for } n\to\infty.
				\]
				Since $C_t\in\cL(X)$ is the restriction of $C_t^\infty$ to the closed invariant  subspace $X\subseteq \ell^\infty$, by Proposition \ref{P.SimpleFact2}(iii) we can conclude that $C_t\in \cL(X)$ is uniformly mean ergodic.
		\end{example}
	
	\begin{remark}\label{remark3.}\rm  The operator $C^\infty_t$ has the property that its range $\mbox{Im}(C^\infty_t)\subseteq \cap_{p>1}\ell^p$ for every $t\in [0,1)$. Indeed, if $x\in\ell^\infty$, then \eqref{Ces-op} yields 
		\[
		|(C^\infty_tx)_n|\leq \frac{1}{(n+1)}\sum_{i=0}^nt^{n-i}|x_i|\leq \frac{\|x\|_\infty}{(n+1)}\sum_{i=0}^\infty t^i=\frac{\|x\|_\infty}{(n+1)(1-t)},\quad n\in\N_0.
		\]
		Since $(\frac{1}{(n+1)})_{n\in\N_0}\in \cap_{p>1}\ell^p$, the claim is established.
	\end{remark}
 We now present a rather general theorem.
 
 \begin{theorem}\label{T-general}
 	Let $X\subseteq \omega$ be a BK-space and $t\in [0,1)$. Suppose that
 	\begin{itemize}
 		\item[\rm (i)] $x_t^{[0]}=(t^n)_{n\in\N_0}\in X$ and 
 		\item[\rm (ii)] $C_t$ exists in $X$ and $C_t\in \cL(X)$ is a compact operator.
 	\end{itemize}
 Then $C_t$ is power bounded, uniformly mean ergodic and 
 \[
 \|(C_t)_{[n]}-P\|_{X\to X}\to 0\ \mbox{ for } n\to\infty,
  \]
  where $P$ is the projection onto  ${\rm span}\{x_t^{[0]}\}$ along the subspace $(I-C_t)(X)$.
 \end{theorem}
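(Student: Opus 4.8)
The plan is to verify the three hypotheses of Theorem~\ref{T-Fact-1} for the operator $T:=C_t\in\cL(X)$ and then read off the conclusion. Hypothesis (i) of Theorem~\ref{T-Fact-1}, namely compactness of $C_t\in\cL(X)$, is exactly assumption (ii) here, so nothing needs to be done. The work lies in establishing hypotheses (ii) and (iii) of Theorem~\ref{T-Fact-1}, and then identifying the limit projection $P$.

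For hypothesis (ii) of Theorem~\ref{T-Fact-1}, I first observe that assumption (i) gives $x_t^{[0]}\in X$, and \eqref{eq.AutovaloriC} (with $m=0$) says $C_t^\omega x_t^{[0]}=x_t^{[0]}$; since $x_t^{[0]}\neq 0$ and, by Remark~\ref{R-Uguale}, $C_t x=C_t^\omega x$ for $x\in X$, this shows $1\in\sigma_{pt}(C_t;X)\subseteq\sigma(C_t;X)$. Next, since $C_t\in\cL(X)$ is compact and $X$ is infinite dimensional (it contains the linearly independent... — here one should note $X$ infinite dimensional, which follows since $0\in\sigma(C_t;X)$ by Proposition~\ref{P.SimpleFact2}(i) once we know $X$ is infinite dimensional; alternatively one argues directly), every nonzero spectral value is an eigenvalue, so by Lemma~\ref{L-2Sez2}(i) we have $\sigma(C_t;X)\setminus\{0\}=\sigma_{pt}(C_t;X)\subseteq\Lambda$. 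Since $\Lambda\setminus\{1\}=\{\frac1{n+1}:n\geq 1\}\subseteq\overline{B(0,1/2)}$ and also $0\in\overline{B(0,1/2)}$, we obtain $\sigma(C_t;X)\setminus\{1\}\subseteq\overline{B(0,1/2)}$, so hypothesis (ii) of Theorem~\ref{T-Fact-1} holds with $\delta=1/2$.

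For hypothesis (iii) of Theorem~\ref{T-Fact-1}, I must show $\Ker(I-C_t)\cap\mathrm{Im}(I-C_t)=\{0\}$ in $X$. The point is that both intersecting sets sit inside their $\omega$-counterparts: if $x\in X$ satisfies $(I-C_t)x=0$ then, by Remark~\ref{R-Uguale}, also $(I-C_t^\omega)x=0$, so $x\in\Ker(I-C_t^\omega)$; and if $x=(I-C_t)y$ for some $y\in X\subseteq\omega$ then $x=(I-C_t^\omega)y\in\mathrm{Im}(I-C_t^\omega)$. Hence
\[
\Ker(I-C_t)\cap\mathrm{Im}(I-C_t)\subseteq \Ker(I-C_t^\omega)\cap\mathrm{Im}(I-C_t^\omega)=\{0\}
\]
by Theorem~\ref{T-Fact 2}(ii). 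This gives hypothesis (iii), and I expect this embedding-into-$\omega$ argument to be the only genuinely delicate point, chiefly because one must be careful that $\mathrm{Im}(I-C_t)$ (not its closure) is what appears, and that the inclusion $X\subseteq\omega$ is merely continuous, not an order embedding — but since no closures or order properties are invoked here, the inclusion of ranges and kernels is immediate.

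Now Theorem~\ref{T-Fact-1} applies and yields that $C_t$ is power bounded, uniformly mean ergodic, and $\|(C_t)_{[n]}-P\|_{X\to X}\to 0$, where $P$ is the projection onto $\Ker(I-C_t)$ along $\mathrm{Im}(I-C_t)$. It remains to identify these two subspaces. By the same embedding argument, $\Ker(I-C_t)\subseteq\Ker(I-C_t^\omega)={\rm span}\{x_t^{[0]}\}$ by Theorem~\ref{T-Fact 2}(i); since $x_t^{[0]}\in X$ by assumption (i) and $C_t x_t^{[0]}=x_t^{[0]}$, the reverse inclusion holds, so $\Ker(I-C_t)={\rm span}\{x_t^{[0]}\}$. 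Finally $\mathrm{Im}(I-C_t)=(I-C_t)(X)$ by definition, and this is a complemented (hence closed) subspace because it is the range of the projection $I-P$. This establishes the statement; I would close by remarking that, because $\sigma(C_t;X)\setminus\{1\}\subseteq\overline{B(0,1/2)}$, the proof of Theorem~6.4 of \cite{ABR-9} in fact gives the stronger conclusion $\|C_t^n-P\|_{X\to X}\to 0$ as well.
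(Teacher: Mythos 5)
Your proposal is correct and follows essentially the same route as the paper: use Lemma \ref{L-2Sez2} together with assumption (i) to place $1$ in the spectrum and the rest of $\sigma(C_t;X)$ inside $\overline{B(0,1/2)}$, reduce $\Ker(I-C_t)\cap{\rm Im}(I-C_t)=\{0\}$ to the corresponding statement for $C_t^\omega$ in $\omega$ via Theorem \ref{T-Fact 2}(ii), and then invoke Theorem \ref{T-Fact-1}. Your explicit identification of $\Ker(I-C_t)={\rm span}\{x_t^{[0]}\}$ is a small but welcome addition that the paper leaves implicit, and the digression about infinite-dimensionality is unnecessary (the inclusion $\sigma(C_t;X)\setminus\{0\}\subseteq\sigma_{pt}(C_t;X)$ for a compact operator needs no such hypothesis) but harmless.
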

	
	\begin{proof}	Assumption (i) and Lemma \ref{L-2Sez2}(i) imply that $1\in\sigma(C_t;X)$. Since $C_t$ is assumed to be compact, Lemma \ref{L-2Sez2}(ii) implies, for $\delta=\frac{1}{2}$, that
		\[
		\sigma(C_t;X)\setminus \{1\}\subseteq \overline{B(0,1/2)}.
		\]
		Let $x\in \Ker(I-C_t)\cap {\rm Im}(I-C_t)$. Theorem \ref{T-Fact 2}(ii) gives 
		\[
		\Ker(I-C_t^\omega)\cap {\rm Im}(I-C_t^\omega)=\{0\}.
		\]
	Since $x\in X$, it follows from \eqref{eq.tre} that $x=0$ (in $X$, of course). Hence, 
	\[
	\Ker(I-C_t)\cap {\rm Im}(I-C_t)=\{0\}.
	\]
	Applying Theorem \ref{T-Fact-1} to $T:=C_t$ in $\cL(X)$ we can conclude that $C_t$ is power bounded and uniformly mean ergodic. An examination of the proof of Theorem 6.4 in \cite{ABR-9} reveals that actually $\|P-T^n\|_{X\to X}\to 0$ for $n\to\infty$ and hence, also the sequence of averages of $(T^n)_{n\in\N_0}$ satisfies
	\[
	\|P-T_{[n]}\|_{X\to X}=\|P-(C_t)_{[n]}\|_{X\to X}\to 0\ \mbox{ for } n\to\infty.
	\]	\end{proof}

When applying Theorem \ref{T-general} to particular BK-spaces $X$ the difficulty occurs in verifying the conditions (i), (ii).

Concerning the supercyclicity of $C_t$ we first observe the following general fact.

\begin{lemma}\label{L-nuovo} Let $X$ be a locally convex Hausdorff sequence space containing $c_{00}$ such that $X \subseteq \omega$ with a continuous inclusion.
	Suppose that $C_t\in\cL(X)$ for some $t\in [0,1]$. Then $C_t$ is not supercyclic in $X$.
	\end{lemma}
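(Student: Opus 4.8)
The plan is to argue by contradiction, exploiting the fact that every orbit vector $C_t^n x$ has a very rigid behaviour in the first coordinate. First I would observe that supercyclicity of $C_t$ forces $X$ to be separable and, moreover, that a supercyclic vector $x$ must satisfy $x_0 \neq 0$: indeed, the closed subspace $Z := \{y \in X : y_0 = 0\}$ is proper (since $c_{00} \subseteq X$, so $e_0 \in X$) and $C_t$-invariant — from \eqref{Ces-op} one has $(C_t y)_0 = y_0$, hence $(C_t^n y)_0 = y_0 = 0$ for all $n$ whenever $y \in Z$. Thus the projective orbit of any $y \in Z$ stays in $Z$ and cannot be dense in $X$. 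So a supercyclic vector $x$ has $x_0 \neq 0$, and after scaling we may take $x_0 = 1$.

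Next I would compute the zeroth coordinate of a general element $\lambda C_t^n x$ of the projective orbit: since $(C_t^n x)_0 = x_0 = 1$ for all $n \geq 0$ by the computation above, we get $(\lambda C_t^n x)_0 = \lambda$. Now choose any target vector $w \in X$ with $w_0 = 0$ (again available because $e_1 \in X$, say $w = e_1$), together with a target of the form $w' = e_0$ with $w'_0 = 1$. Density of the projective orbit, combined with the continuity of the coordinate functional $f_0|_X \in X'$ (Lemma \ref{L-C}), would force the set $\{(\lambda C_t^n x)_0 : \lambda \in \C,\ n \in \N_0\} = \C$ to approximate both $0$ and $1$ in the first coordinate — which it can, so this alone gives no contradiction. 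The real leverage comes from looking simultaneously at two coordinates: consider the continuous linear map $\Phi : X \to \C^2$, $\Phi(y) = (y_0, y_1)$. Then $\Phi(\lambda C_t^n x) = \lambda(1, (C_t^n x)_1)$ lies on the complex line through the origin with "slope" $(C_t^n x)_1$. From \eqref{Ces-op}, $(C_t x)_1 = \tfrac{t x_0 + x_1}{2} = \tfrac{t + x_1}{2}$ and iterating, $(C_t^{n} x)_1$ is an \emph{explicitly determined sequence} converging to a fixed limit (in fact $(C_t^n x)_1 \to 0$ as $n \to \infty$, since $C_t$ is compact and power bounded with $C_t^n \to P$ where $P$ has range $\operatorname{span}\{x_t^{[0]}\}$ and $(x_t^{[0]})$ has first coordinate... — one must track this carefully). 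Hence the slopes $(C_t^n x)_1 / (C_t^n x)_0 = (C_t^n x)_1$ take only countably many values, all converging to a single limit $s_\infty$; so $\Phi(\{\lambda C_t^n x\})$ is contained in a countable union of complex lines through the origin that accumulates only on the single line $\{(\zeta, s_\infty \zeta) : \zeta \in \C\}$. This set is \emph{not} dense in $\C^2$ — for instance no point $(0,1)$ lies in its closure. Since $\Phi$ is continuous and surjective (as $e_0, e_1 \in X$), the projective orbit cannot be dense in $X$, a contradiction.

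For the edge case $t = 1$ the same argument applies verbatim, using $(C_1^n x)_0 = x_0$ and the classical fact that $(C_1^n x)_1 \to 0$; and the case $t=0$, where $C_0 = D_\varphi$ is the diagonal operator $y \mapsto (y_n/(n+1))$, is even easier since $(C_0^n x)_0 = x_0$ and $(C_0^n x)_1 = x_1/2^n \to 0$ directly.

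The main obstacle I anticipate is making the two-coordinate accumulation argument fully rigorous without over-relying on compactness: one needs that $\{(C_t^n x)_1 : n \in \N_0\}$ accumulates at only finitely many (here, one) points of $\C$, and that the associated family of lines through the origin in $\C^2$ has non-dense union. The cleanest route is to invoke that $C_t^n \to P$ in the strong operator topology (which follows from Theorem \ref{T-general}, or more elementarily from power boundedness plus $\sigma(C_t;X) \setminus \{1\} \subseteq \overline{B(0,1/2)}$ via the spectral mapping / Gelfand-type estimate), so that $(C_t^n x)_1 \to (Px)_1$; then the union of lines $\bigcup_n \{(\zeta, (C_t^n x)_1\,\zeta)\}$ is contained in a set whose closure in $\C^2$ misses, e.g., $(0,1)$, and continuity of $(y_0,y_1)$ on $X$ finishes the proof. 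An alternative, avoiding any convergence of $C_t^n$, is purely algebraic: every orbit vector has first coordinate equal to $x_0$ times the scalar $\lambda$, so scaling the orbit can only rescale the first coordinate uniformly across all $n$, forcing $\{(\lambda C_t^n x)_0\}$ and the tail coordinates to be linked; I would develop whichever of these two presentations is shorter once the precise behaviour of $(C_t^n x)_1$ is pinned down.
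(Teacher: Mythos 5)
Your argument is genuinely different from the paper's, which disposes of the lemma in two lines: by Theorem 6.1 of \cite{ABR-9} the operator $C_t^\omega$ is not supercyclic on $\omega$, and since $c_{00}\subseteq X$ the inclusion $X\hookrightarrow\omega$ is continuous with dense range and intertwines $C_t$ with $C_t^\omega$, so supercyclicity of $C_t$ on $X$ would transfer to $C_t^\omega$ on $\omega$ by the comparison principle \cite[\S 1.1.1]{B-M}. Your route is self-contained and avoids citing the $\omega$-result, which is a genuine gain; the two-coordinate idea (the image of the projective orbit under $\Phi(y)=(y_0,y_1)$ is a countable union of complex lines through the origin whose slopes accumulate at a single point, hence is not dense in $\C^2$) is sound, and your preliminary reduction to $x_0\neq 0$ via the closed invariant hyperplane $\{y_0=0\}$ is correct since $(C_ty)_0=y_0$.

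There is, however, one step you must repair. You propose to obtain convergence of the slopes $(C_t^nx)_1$ from $C_t^n\to P$, invoking compactness, power boundedness, or Theorem \ref{T-general}. None of these is available under the hypotheses of the lemma, which assumes only that $X$ is a locally convex Hausdorff sequence space containing $c_{00}$ with $X\subseteq\omega$ continuous and $C_t\in\cL(X)$; the lemma is applied in the paper precisely to cases such as $C_1$ on $\ell^p$ and on $N^p$, where $C_1$ is neither compact nor power bounded and $C_1^n$ does not converge. The fix is elementary and makes the heavy machinery unnecessary: from \eqref{Ces-op}, $(C_t^{n+1}x)_1=\tfrac{1}{2}\bigl(t\,(C_t^nx)_0+(C_t^nx)_1\bigr)=\tfrac{1}{2}\bigl(tx_0+(C_t^nx)_1\bigr)$, an affine recursion with contraction factor $\tfrac12$, so $(C_t^nx)_1\to tx_0$ for every $t\in[0,1]$ and every $x$. (Your stated limit $0$ is incorrect --- for $t=1$ and $x_0=1$ one gets $(C_1^nx)_1=\bigl((2^n-1)x_0+x_1\bigr)/2^n\to x_0$ --- but only boundedness and convergence of the slopes matter, not the value of the limit.) With this substitution your proof is complete and valid in the full generality of the lemma.
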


\begin{proof} Theorem 6.1 in \cite{ABR-9} shows that $C_t^\omega\in\cL(\omega)$ is \textit{not} supercyclic in $\omega$. Moreover, $X$ is dense in $\omega$, as it contains $c_{00}$. Now, if
	$C_t\in\cL(X)$ was supercyclic in $X$, then it would also be supercyclic in $\omega$ by the comparison principle, \cite[\S 1.1.1, p.10]{B-M}.  A contradiction.
	\end{proof}

Now, let $X\subseteq \omega$ be a BK-space and suppose that $t\in [0,1]$ has the property that $C_t$ exists in $X$, that is, $C_t\in \cL(X)$. Then its dual operator $C'_t\colon X'\to X'$ is continuous. Given $x\in X$ and $z\in X'$ we have 
\begin{equation}\label{eq.sei}
\langle C_tx,z\rangle=\langle x,C_t'z\rangle.
\end{equation}
That is, given $z\in X'$ there is a unique $y'\in X'$ (denoted by $C_t'z$) satisfying
\[
\langle C_tx,z\rangle=\langle x,y\rangle,\quad \forall x\in X.
\]
Fix $n\in\N_0$ and consider $z_t^{[n]}\in\omega_\beta'$ as defined by \eqref{eq.Eigenvalues}. Restricting $z_t^{[n]}$ to $X$, that is, each $f_{n-i}$ occurring in \eqref{eq.Eigenvalues} is restricted to $X$, it follows from Lemma \ref{L-C} that $z_t^{[n]}\in X'$.  Moreover,  \eqref{eq.sei} implies that
\begin{equation}\label{eq.sette}
	\langle C_tx,z_t^{[n]}\rangle=\langle x,C_t'z_t^{[n]}\rangle, \quad x\in X.
	\end{equation} 
Since $z_t^{[n]}\in\omega'_\beta$ and $C_tx=C_t^\omega x$ (cf. \eqref{eq.tre}), we have that
\[
\langle C_tx,z_t^{[n]}\rangle=\langle C_t^\omega x,z_t^{[n]}\rangle=\langle x,(C_t^\omega)'z_t^{[n]}\rangle=\langle x,\frac{1}{n+1}z^{[n]}\rangle;
\]
see Theorem \ref{T-Fact 2}(iii). By \eqref{eq.sette} we can conclude that
$
\langle x, C_t'z^{[n]}_t\rangle=\langle x,\frac{1}{n+1}z^{[n]}\rangle$, for all $x\in X$,
with $z^{[n]}_t\in X'$. Accordingly,
\[
C_t'z^{[n]}_t=\frac{1}{n+1}z^{[n]}_t.
\]
Under the \textit{assumption} that $z^{[n]}_t\in X'\setminus\{0\}$ this shows that $\frac{1}{n+1}\in\sigma_{pt}(C'_t;X')$ and that $z^{[n]}_t\not=0$ is a corresponding eigenvector.

\begin{theorem}\label{T-2} Let $X\subseteq \omega$ be a BK-space and $t\in [0,1]$. Suppose that
	\begin{itemize}
		\item[\rm (i)] $C_t$ exists in $X$, that is,  $C_t\in\cL(X)$, and 
		\item[\rm (ii)] the set $\{n\in\N_0\,:\, z_t^{[n]}\in X'\setminus\{0\}\}$ contains at least two elements  or the space $X$ contains $c_{00}$.
	\end{itemize}
Then $C_t$ is not supercyclic.
	\end{theorem}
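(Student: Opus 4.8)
The plan is to treat the two alternatives in hypothesis (ii) separately. If $X$ contains $c_{00}$, this case is immediate: a BK-space $X\subseteq\omega$ is in particular a locally convex Hausdorff sequence space containing $c_{00}$ whose inclusion into $\omega$ is continuous, so Lemma \ref{L-nuovo} applies verbatim and yields that $C_t$ is not supercyclic.

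So assume instead that there are $n_1<n_2$ in $\N_0$ with $z_t^{[n_1]},z_t^{[n_2]}\in X'\setminus\{0\}$. The computation carried out immediately before the statement of the theorem shows that $u:=z_t^{[n_1]}$ and $v:=z_t^{[n_2]}$ are then eigenvectors of the dual operator $C_t'\colon X'\to X'$ for the eigenvalues $\alpha_1:=\tfrac1{n_1+1}$ and $\alpha_2:=\tfrac1{n_2+1}$, which are distinct; being eigenvectors for distinct eigenvalues, $u$ and $v$ are linearly independent in $X'$. Arguing by contradiction, I would suppose that $C_t$ is supercyclic, so that $X$ is separable and some $x\in X$ has dense projective orbit $O:=\{\lambda C_t^nx:\lambda\in\C,\ n\in\N_0\}$. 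The key move is to push $O$ forward through the continuous linear map $\Phi\colon X\to\C^2$, $\Phi(y):=(\langle y,u\rangle,\langle y,v\rangle)$. Since $u,v$ are linearly independent, $\Phi$ is surjective, so continuity of $\Phi$ together with density of $O$ gives that $\Phi(O)$ is dense in $\C^2$. On the other hand, from $(C_t')^ku=\alpha_1^ku$ and $(C_t')^kv=\alpha_2^kv$ one gets $\Phi(\lambda C_t^nx)=\lambda(\alpha_1^na,\alpha_2^nb)$, where $a:=\langle x,u\rangle$ and $b:=\langle x,v\rangle$; moreover $a\neq0$ and $b\neq0$, for otherwise $O$ would lie in the proper closed hyperplane $\Ker u$ or $\Ker v$, contradicting density. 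Hence $\Phi(O)=\{\lambda(\alpha_1^na,\alpha_2^nb):\lambda\in\C,\ n\in\N_0\}$ with $a,b\neq0$.

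It then remains only to observe that this explicit set cannot be dense in $\C^2$, and this is the one place where a little care is needed. Since $0<\alpha_2<\alpha_1$, the ratios $\rho_n:=(\alpha_2^nb)/(\alpha_1^na)=\tfrac ba(\alpha_2/\alpha_1)^n$ tend to $0$, so $M:=\sup_{n}|\rho_n|<\infty$. A point $\lambda(\alpha_1^na,\alpha_2^nb)$ lying within distance $\varepsilon<1$ of $(0,1)$ must have $\lambda\neq0$ (as $0$ is at distance $1$ from $(0,1)$), $|\lambda\alpha_1^na|<\varepsilon$ and $|\lambda\alpha_2^nb|>1-\varepsilon$, whence $|\rho_n|>(1-\varepsilon)/\varepsilon$; choosing $\varepsilon:=\tfrac1{2(M+1)}$ gives $(1-\varepsilon)/\varepsilon=2M+1>M\ge|\rho_n|$, a contradiction. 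Thus the $\varepsilon$-ball about $(0,1)$ avoids $\Phi(O)$ altogether, so $\Phi(O)$ is not dense in $\C^2$ --- the desired contradiction, whence $C_t$ is not supercyclic. I expect the main (and rather mild) obstacle to be exactly this bookkeeping: verifying that one fixed ball simultaneously avoids every scalar multiple of every vector $(\alpha_1^na,\alpha_2^nb)$ and handling cleanly the degenerate point $\lambda=0$. One could shortcut this last step by invoking the known fact --- see e.g. \cite{B-M} --- that the adjoint of a supercyclic operator can have at most one eigenvalue; I prefer the elementary argument above, as it keeps the discussion self-contained.
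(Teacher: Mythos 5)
Your proof is correct and follows the same route as the paper: the case $c_{00}\subseteq X$ is disposed of by Lemma \ref{L-nuovo}, and the other case rests on the fact that the adjoint of a supercyclic operator cannot have two distinct eigenvalues. The only difference is that the paper simply cites \cite[Proposition 1.26]{B-M} for this last fact, whereas you reprove it from scratch via the surjective map $\Phi$ and the quantitative non-density estimate around $(0,1)$; that elementary argument (including the treatment of $\lambda=0$ and the verification that $a,b\neq 0$) is sound.
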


\begin{proof} If $\{n\in\N_0\,:\, z_t^{[n]}\in X'\setminus\{0\}\}$ contains at least two elements, then  the discussion prior Theorem \ref{T-2} shows that the operator $C'_t\in \cL(X')$ has at least two distinct eigenvalues. It follows from Proposition 1.26 in \cite{B-M} that $C_t$ is not supercyclic. 
	
	In the case that  the BK-space $X$ (hence, $X\subseteq \omega$ with a continuous inclusion) contains $c_{00}$,  Lemma \ref{L-nuovo} yields that $C_t$ is not supercyclic.
\end{proof}

To apply Theorem \ref{T-2}, the condition (ii)
needs to be satisfied. Let us discuss the condition in (ii) which requires that ``the set $\{n\in\N_0\,:\, z_t^{[n]}\in X'\setminus\{0\}\}$ contains at least two elements''.
Suppose that $z_t^{[n]}=0$ as an element of $X'$, for some $n\in\N_0$. Since each functional $f_j$, for $j\in\N_0$, is given by  $f_j(x)=x_j$, for $x\in \omega$ (cf. the discussion after \eqref{eq.sem-omega}), it follows from \eqref{eq.Eigenvalues} that
\begin{equation}\label{eq.Otto}
	\sum_{i=0}^n (-1)^i\binom{n}{i}t^i x_{n-i}=0, \quad \forall x\in X,
	\end{equation}
that is,
 $\langle a,x\rangle=0$, for  $x\in X$,
with $a:=((-1)^{n-i}\binom{n-i}{i}t^{n-i} )_{i=0}^n\in\C^{n+1}$. Since $a\not=0$, there exists $u=(u_0,\ldots, u_n)$ in $\C^{n+1}\setminus\{0\}$ such that $ \langle a,u\rangle\not=0$. So, \eqref{eq.Otto} can be satisfied if and only if 
\[
x_0=x_1=\ldots=x_n=0\ \mbox{ for every } x\in X.
\]
Define $X(n):=\{x\in X\, :\, x_j=0 \ \mbox{ for all }\ 0\leq j\leq n\}$, which is a closed subspace of $X$. So, we have established the following result.

\begin{lemma}\label{L-4} Let $X\subseteq\omega$ be a BK-space. Suppose that $t\in [0,1]$ and $n\in\N_0$ satisfy $z_t^{[n]}\in X'$. Then $z_t^{[n]}\not=0$ if and only if $X\not\subseteq X(n)$.
	\end{lemma}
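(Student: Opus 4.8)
The statement is really a bookkeeping fact about when the functional $z_t^{[n]}$ vanishes after restriction to $X$, so the plan is to make that restriction explicit and then read off both implications. Regarded as an element of $X'$, $z_t^{[n]}$ is — by Lemma \ref{L-C} and the fact that each $f_j$ acts by $f_j(x)=x_j$ — the functional $x\mapsto\sum_{i=0}^n(-1)^i\binom{n}{i}t^ix_{n-i}$ on $X$ (this already shows $z_t^{[n]}\in X'$, being a finite combination of the $f_j|_X$, so the hypothesis is automatic). Hence ``$z_t^{[n]}=0$ in $X'$'' means precisely that \eqref{eq.Otto} holds for every $x\in X$, and the task becomes to transcribe \eqref{eq.Otto} into a statement about the finite coordinate block $\pi_n(X):=\{(x_0,\dots,x_n):x\in X\}\subseteq\C^{n+1}$.

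Reindexing \eqref{eq.Otto} by $j=n-i$ turns it into $\langle a,(x_0,\dots,x_n)\rangle=0$ for all $x\in X$, with $a:=\bigl((-1)^{n-j}\binom{n}{j}t^{n-j}\bigr)_{j=0}^n$. The one elementary point to isolate is that $a\neq 0$ for every $t\in[0,1]$ and every $n$, since its entry in position $n$ equals $(-1)^0\binom{n}{n}t^0=1$. Thus ``$z_t^{[n]}=0$ in $X'$'' is exactly ``$a\perp\pi_n(X)$'', while ``$X\subseteq X(n)$'' is exactly ``$\pi_n(X)=\{0\}$''. One direction of the claimed equivalence is then immediate: if $X\subseteq X(n)$, then for every $x\in X$ all of $x_{n-i}$, $0\le i\le n$, vanish, so \eqref{eq.Otto} holds trivially and $z_t^{[n]}=0$ in $X'$; contrapositively, $z_t^{[n]}\neq 0$ forces $X\not\subseteq X(n)$.

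The substance — and the step I expect to be the main obstacle — is the converse: deducing $\pi_n(X)=\{0\}$ from the mere fact that the nonzero vector $a$ annihilates $\pi_n(X)$. Choosing $u\in\C^{n+1}$ with $\langle a,u\rangle\neq 0$ shows only that $\pi_n(X)\subsetneq\C^{n+1}$, and to upgrade this to $\pi_n(X)=\{0\}$ one must use that the coordinate block $\pi_n(X)$ is not a proper nonzero subspace of $\C^{n+1}$ — which is where the BK-structure enters: it holds, for instance, whenever $c_{00}\subseteq X$ (forcing $\pi_n(X)=\C^{n+1}$), and more generally follows from solidity or from the explicit form of the norm for the spaces in play. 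Granting that dichotomy, $\pi_n(X)\subsetneq\C^{n+1}$ leaves only $\pi_n(X)=\{0\}$, that is $X\subseteq X(n)$, which closes the equivalence.
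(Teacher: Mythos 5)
Your reduction of the statement to linear algebra on the coordinate block $\pi_n(X):=\{(x_0,\dots,x_n):x\in X\}\subseteq\C^{n+1}$ is faithful to the paper's discussion preceding the lemma: $z_t^{[n]}|_X$ is the functional $x\mapsto\langle a,(x_0,\dots,x_n)\rangle$ with $a_j=(-1)^{n-j}\binom{n}{j}t^{n-j}$ and $a_n=1\neq0$, so $z_t^{[n]}=0$ in $X'$ iff $a\perp\pi_n(X)$, and the direction $X\subseteq X(n)\Rightarrow z_t^{[n]}=0$ is immediate. The problem is the converse, and it sits exactly where you flagged it: the dichotomy you then ``grant'' --- that $\pi_n(X)$ is either $\{0\}$ or not contained in the hyperplane $a^{\perp}$ --- is \emph{not} a consequence of $X$ being a BK-space, and you cannot appeal to $c_{00}\subseteq X$ or to solidity because neither is among the hypotheses. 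Indeed a BK-space can have $\pi_n(X)$ equal to any prescribed nonzero proper subspace of $\C^{n+1}$ (span finitely many suitable sequences and use the sup-norm, as in Example \ref{E-S1}). The implication genuinely fails under the stated hypotheses: for $t=1$ and $n\geq1$ one has $\langle a,\mathbbm{1}\rangle=\sum_{j=0}^{n}(-1)^{n-j}\binom{n}{j}=(1-1)^{n}=0$, so for the BK-space $X={\rm span}\{\mathbbm{1}\}$ of Example \ref{E-S1}(ii) the functional $z_1^{[n]}$ vanishes identically on $X$ even though $X\not\subseteq X(n)$; for $t\in(0,1)$ one gets analogous one-dimensional counterexamples by spanning any sequence whose first $n+1$ coordinates form a nonzero vector orthogonal to $a$.

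For comparison, the paper's own argument stumbles at the very same step: from the existence of some $u\in\C^{n+1}$ with $\langle a,u\rangle\neq0$ it concludes that \eqref{eq.Otto} forces $x_0=\dots=x_n=0$ for all $x\in X$, but that existence statement only shows $\pi_n(X)\neq\C^{n+1}$, not $\pi_n(X)=\{0\}$. So your write-up is the more candid of the two in that it isolates the missing step rather than eliding it; but as a proof of the lemma as stated it is incomplete, and the gap cannot be closed without strengthening the hypotheses --- for instance by assuming one of the sufficient conditions of Remark \ref{R-1} (some $e_j$ or some $x_t^{[m]}$, $0\leq j,m\leq n$, lies in $X$... though even these guarantee only $X\not\subseteq X(n)$, not the implication $z_t^{[n]}=0\Rightarrow X\subseteq X(n)$), or by assuming $c_{00}\subseteq X$, which makes $\pi_n(X)=\C^{n+1}$ and renders your argument, and the paper's, correct.
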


\begin{remark}\label{R-1}\rm Sufficient conditions for  $X\not\subseteq X(n)$ are as follows.
		
	(i) There exists $0\leq j\leq n$ such that $e_j\in X$. For  most of the classical spaces $X$ that we consider it turns out  that actually $\cE\subseteq X$ and so  $X\not\subseteq X(n)$ for all $n\in\N_0$.
	
	(ii) Fix $t\in [0,1]$. Since the $m$-th coordinate of $x^{[m]}_t$ (cf. \eqref{eq.EigenvalueC}) is 1, for every $m\in\N_0$, it is clear, for any $n\in\N_0$, that  $X\not\subseteq X(n)$ whenever there exists $0\leq m\leq n$ such that $x^{[m]}_t\in X$.
\end{remark}

\section{The spaces $\ell^p$ for $1\leq p\leq \infty$}

In this section we consider the operators $C_t$, for $t\in [0,1]$, when they act in the spaces $\ell^p$, for $1\leq p\leq \infty$. Here, and in the subsequent sections, we treat the case $t=1$ separately from $t\in [0,1)$, as the behaviour of $C_1$ is often quite different to that of $C_t$, for $t\in [0,1)$.

{Consider the case $t=1$.} Since $e_0\in\ell^1$, but $C_1e_0=(\frac{1}{n+1})_{n\in\N_0}\not\in \ell^1$, 
the operator $C_1$ does not exist in $\ell^1$.

\begin{prop}\label{P.4.1} {\rm (i)} let $1<p<\infty$. The operator $C_1\in\cL(\ell^p)$ has norm $\|C_1\|_{\ell^p\to\ell^p}=p'$  (with $\frac{1}{p}+\frac{1}{p'}=1$) and spectrum
	\begin{equation}\label{4.1}
	\sigma(C_1;\ell^p)=\left\{z\in \C\, : \, \left|z-\frac{p'}{2}\right|\leq \frac{p'}{2}\right\}.
	\end{equation}
Moreover,  $C_1\in \cL(\ell^p)$ is not compact and not power bounded and fails to be either mean ergodic or supercyclic.

{\rm (ii)} The operator  $C_1\in \cL(\ell^\infty)$ satisfies $\|C_1\|_{\ell^\infty\to\ell^\infty}=1$ and has spectrum
\begin{equation}\label{4.2}
\sigma(C_1;\ell^\infty)=\left\{z\in\C\, :\, \left|z-\frac{1}{2}\right|\leq \frac{1}{2}\right\}.
\end{equation}
Furthermore,  $C_1\in \cL(\ell^\infty)$ is not compact, not mean ergodic and fails to be supercyclic.
\end{prop}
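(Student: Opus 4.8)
The plan for (i) is to import the classical norm and spectrum of $C_1$ on $\ell^p$ and then read the remaining assertions off the spectrum. The identity $\|C_1\|_{\ell^p\to\ell^p}=p'$ is the classical Hardy inequality together with the sharpness of its constant, and the description $\sigma(C_1;\ell^p)=\{z\in\C:|z-p'/2|\le p'/2\}$ is recorded in the literature (see \cite{BHS,CR1,G,Le}). Since $1<p<\infty$ forces $p'>1$, the point $p'$ lies in $\sigma(C_1;\ell^p)$ and has modulus $>1$. As $\ell^p$ is a Banach space, hence sequentially complete and barrelled, Proposition \ref{P.inclusioneD} shows that $C_1$ is not power bounded; and if $C_1$ were mean ergodic then $\frac{C_1^n}{n}\to 0$ in $\cL_s(\ell^p)$ by \eqref{eq.Tende0}, so Proposition \ref{P.inclusioneD} would force $\sigma(C_1;\ell^p)\subseteq\overline{B(0,1)}$, a contradiction; hence $C_1$ is not mean ergodic either. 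The spectrum of a compact operator on an infinite-dimensional Banach space is countable (Riesz theory of compact operators), while the non-degenerate disc \eqref{4.1} is uncountable, so $C_1$ is not compact. Finally $c_{00}\subseteq\ell^p$, so Theorem \ref{T-2} (equivalently Lemma \ref{L-nuovo}) gives that $C_1$ is not supercyclic.

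For (ii) I would first dispose of the elementary parts. The bound $|(C_1x)_n|\le\frac{1}{n+1}\sum_{k=0}^n|x_k|\le\|x\|_\infty$ yields $\|C_1\|_{\ell^\infty\to\ell^\infty}\le 1$, and $C_1\mathbbm{1}=\mathbbm{1}$ gives equality, so $\|C_1\|_{\ell^\infty\to\ell^\infty}=1$; the identification $\sigma(C_1;\ell^\infty)=\{z\in\C:|z-1/2|\le 1/2\}$ is taken from \cite{Le,P,Re}. Exactly as in (i), this disc \eqref{4.2} is uncountable, so $C_1$ is not compact on $\ell^\infty$; and $C_1$ is not supercyclic, either by Theorem \ref{T-2} (since $c_{00}\subseteq\ell^\infty$) or simply because $\ell^\infty$ is non-separable.

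The substantive point is that $C_1$ is not mean ergodic on $\ell^\infty$, where the unit-disc argument of (i) is unavailable since $\sigma(C_1;\ell^\infty)\subseteq\overline{B(0,1)}$ with $1$ on the unit circle. Here the plan is to restrict to the closed, $C_1$-invariant subspace $c_0\subseteq\ell^\infty$. Since $\|C_1\|_{\ell^\infty\to\ell^\infty}=1$, $C_1\in\cL(\ell^\infty)$ is power bounded, so if it were mean ergodic then by Proposition \ref{P.SimpleFact2}(iii) so would be $C_1|_{c_0}\in\cL(c_0)$; I will contradict this via Proposition \ref{P.Non-ME} applied to $C_1|_{c_0}$. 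Indeed, $C_1|_{c_0}$ is power bounded; a short coordinate recursion shows that $\{x\in\ell^\infty:C_1x=x\}={\rm span}\{\mathbbm{1}\}$, which meets $c_0$ only at $0$, so Proposition \ref{P.SimpleFact1}(i) gives $1\notin\sigma_{pt}(C_1|_{c_0};c_0)$; and the coordinate functional $f_0\in(\ell^\infty)'$ satisfies $C_1'f_0=f_0$ because $(C_1x)_0=x_0$, with $f_0|_{c_0}\ne 0$, so Proposition \ref{P.SimpleFact1}(ii) yields $1\in\sigma_{pt}((C_1|_{c_0})';(c_0)')$. Proposition \ref{P.Non-ME} then says $C_1|_{c_0}$ is not mean ergodic --- the required contradiction --- so $C_1$ is not mean ergodic on $\ell^\infty$. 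Everything except this last step is routine spectral and compactness bookkeeping together with the cited norm and spectrum computations; the genuine obstacle is precisely this spectrally-invisible failure of mean ergodicity on $\ell^\infty$, which the reduction to $c_0$ combined with Sine's criterion (Proposition \ref{P.Non-ME}) resolves using only machinery already in place.
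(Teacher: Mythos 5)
Your argument is correct, and part (i) together with the norm, spectrum, compactness and supercyclicity claims of part (ii) follow the paper's proof essentially verbatim: import the classical norm and spectral disc, rule out power boundedness and mean ergodicity on $\ell^p$ via Proposition \ref{P.inclusioneD} and \eqref{eq.Tende0}, rule out compactness because the disc is uncountable, and rule out supercyclicity via Lemma \ref{L-nuovo} (resp.\ non-separability of $\ell^\infty$). Where you genuinely diverge is the failure of mean ergodicity on $\ell^\infty$: the paper simply cites Proposition 4.3 of \cite{ABR00}, whereas you give a self-contained proof by restricting to the closed $C_1$-invariant subspace $c_0$, invoking Proposition \ref{P.SimpleFact2}(iii) in the contrapositive, and then applying Sine's criterion (Proposition \ref{P.Non-ME}) to $C_1|_{c_0}$ after checking that $\Ker(I-C_1)\cap\ell^\infty={\rm span}\{\mathbbm 1\}$ misses $c_0$ while $C_1'f_0=f_0$ with $f_0|_{c_0}\neq 0$ (equivalently, $C_1'e_0=e_0$ in $\ell^1=(c_0)'$). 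All the ingredients check out --- the fixed-point computation, the invariance of $c_0$ under $C_1$, and the eigenvector of the dual are each correct --- and what your route buys is independence from the external reference at the cost of a slightly longer argument; it also simultaneously re-proves the non-mean-ergodicity of $C_1$ on $c_0$ that the paper again only cites in Proposition \ref{P.5.1}. One small point worth making explicit if you write this up: the invariance $C_1(c_0)\subseteq c_0$ is the standard fact that Ces\`aro averages of a null sequence are null.
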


\begin{proof} (i) It is known that $C_1\in\cL(\ell^p)$ satisfies $\|C_1\|_{\ell^p\to\ell^p}=p'$,  \cite[Theorem 326]{HLP}. According to \cite[Theorem 2]{Le} the spectrum of $C_1$ in $\ell^p$ is given by \eqref{4.1}.
In particular, $C_1\in \cL(\ell^p)$ is not compact. Since $\sigma(C_1;\ell^p)\not\subseteq \overline{B(0,1)}$, it follows from Proposition \ref{P.inclusioneD} (with $X=\ell^p$ and $T=C_1$)  that $C_1$ is not power bounded. Moreover, if $C_1$ was mean ergodic in $\ell^p$, then \eqref{eq.Tende0} would be satisfied from which $\sigma(C_1;\ell^p)\subseteq \overline{B(0,1)}$ would follow (cf. Proposition \ref{P.inclusioneD}). Since this is not the case, we can conclude that $C_1\in \cL(\ell^p)$ is not mean ergodic. Lemma \ref{L-nuovo} implies that $C_1$ is not supercyclic in $\ell^p$.

(ii) According to \cite[Theorem 4]{Le} it is known that $C_1\in \cL(\ell^\infty)$ satisfies $\|C_1\|_{\ell^\infty\to\ell^\infty}=1$ (hence, $C_1$ is power bounded) and $\sigma(C_1;\ell^\infty)$ is given by \eqref{4.2}.
So, $C_1\in\cL(\ell^\infty)$ is not compact. Proposition 4.3 of \cite{ABR00} shows that $C_1$ is not mean ergodic in $\ell^\infty$. Since $\ell^\infty$ is non-separable, the discussion prior to Proposition \ref{P.SimpleFact2} reveals that $C_1$ is not supercyclic; see also Lemma \ref{L-nuovo}.
\end{proof}

\begin{prop}\label{P.4.2} Let $0\leq t< 1$, in which case $C_t\in\cL(\ell^p)$ for all $1\leq p\leq\infty$.
	\begin{itemize}
		\item[\rm (i)] For $p=1$ the operator $C_t\in \cL(\ell^1)$ satisfies $\|C_t\|_{\ell^1\to\ell^1}=t^{-1}\log(1/(1-t))$ whenever $t\in (0,1)$, whereas $\|C_0\|_{\ell^1\to\ell^1}=1$. Moreover, $C_t$ is compact in $\ell^1$ with
		\begin{equation}\label{4.3}
		\sigma_{pt}(C_t;\ell^1)=\Lambda \ \mbox{ and }\ \sigma(C_t;\ell^1)=\Lambda_0,
		\end{equation}
	and  $C_t$ is power bounded and uniformly mean ergodic in $\ell^1$ but, not supercyclic.
	\item[\rm (ii)] For $p=\infty$ the operator  $C_t\in \cL(\ell^\infty)$ satisfies $\|C_t\|_{\ell^\infty\to\ell^\infty}=1$ and is a compact operator with 
	\begin{equation}\label{4.4}
	\sigma_{pt}(C_t;\ell^\infty)=\Lambda \ \mbox{ and }\ \sigma(C_t;\ell^\infty)=\Lambda_0.
	\end{equation}
Moreover, $C_t$ is power bounded and uniformly mean ergodic in $\ell^\infty$ but, not supercyclic.
\item[\rm (iii)] For  $1<p<\infty$ each operator $C_t\in \cL(\ell^p)$ satisfies $\|C_t\|_{\ell^p\to \ell^p}\leq p/(p-1)$  and is  compact  with spectra
\begin{equation}\label{4.5}
\sigma_{pt}(C_t;\ell^p)=\Lambda \ \mbox{ and }\ \sigma(C_t;\ell^p)=\Lambda_0.
\end{equation}
Furthermore, $C_t$ is power bounded and uniformly mean ergodic  but, not supercyclic in $\ell^p$.
	\end{itemize}
\end{prop}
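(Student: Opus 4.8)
The plan is to route every assertion, in all three cases, through the general machinery of Section~3, so that the only genuinely ``hands-on'' work is the boundedness/norm of $C_t$ on $\ell^p$, its compactness, and the location of the eigenvectors $x_t^{[m]}$. I would begin with boundedness and the norms. The matrix of $C_t$ in the basis $\cE$ is lower triangular with entries $a_{nk}=t^{n-k}/(n+1)$ for $0\le k\le n$. For $p=1$ the operator norm is the supremum of the column sums $\sum_{n\ge k}t^{n-k}/(n+1)=\sum_{j\ge0}t^j/(k+1+j)$; these decrease in $k$, so the norm equals $\sum_{j\ge0}t^j/(j+1)=t^{-1}\log(1/(1-t))$ for $t\in(0,1)$, while $\|C_0\|_{\ell^1\to\ell^1}=1$ since $C_0=D_\varphi$. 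For $p=\infty$ the norm is the supremum of the row sums $\frac1{n+1}\sum_{j=0}^n t^j$, which is maximal ($=1$) at $n=0$. For $1<p<\infty$, since $\ell^p$ is solid with absolute Riesz norm and $C_1\in\cL(\ell^p)$ with $\|C_1\|_{\ell^p\to\ell^p}=p'$ by \cite[Theorem 326]{HLP}, Proposition~\ref{P-Nuova-1} gives $C_t\in\cL(\ell^p)$ with $\|C_t\|_{\ell^p\to\ell^p}\le p'=p/(p-1)$; in particular $C_t$ exists in $\ell^p$ for every $1\le p\le\infty$.

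Next, compactness, which is known (see \cite{SEl-S} for $\ell^1,\ell^\infty$ and \cite{YD} for $\ell^p$) but which can be obtained uniformly in $p$ as follows. Let $P_N$ denote the coordinate projection onto the first $N$ slots. From $|(C_tx)_n|\le\frac1{n+1}\sum_{k=0}^n t^{n-k}|x_k|$, straightforward column- and row-sum estimates (together with Riesz--Thorin interpolation for $1<p<\infty$) yield $\|(I-P_N)C_t\|_{\ell^p\to\ell^p}\le\big((N+1)(1-t)\big)^{-1}\to0$ for every $1\le p\le\infty$; hence $C_t=\lim_N P_NC_t$ in operator norm and $C_t\in\cL(\ell^p)$ is compact. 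Conceptually this is the factorization $C_t=D_\varphi\circ M_g$, where $M_g\colon x\mapsto\big(\sum_{k=0}^n t^{n-k}x_k\big)_{n\in\N_0}$ is convolution by $g=(t^n)_{n\in\N_0}\in\ell^1$, bounded on each $\ell^p$ by Young's inequality with $\|M_g\|\le\|g\|_1=1/(1-t)$, composed with $D_\varphi$, which is compact on each $\ell^p$, $1\le p\le\infty$ (for $p<\infty$ because its diagonal tends to $0$, for $p=\infty$ because $D_\varphi(\cU_{\ell^\infty})=\{y\in\omega:|y_n|\le1/(n+1)\ \forall n\}$ lies within $1/(N+1)$, in sup-norm, of the compact finite-dimensional set obtained by zeroing all coordinates past $N$); since the compact operators form an ideal, $C_t$ is compact.

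With these in place the remaining assertions are immediate. By \cite[Lemma 3.6(ii)]{CR4} every eigenvector $x_t^{[m]}$ lies in $d_1\subseteq\ell^1\subseteq\ell^p$ (and in $\ell^\infty$) for all $m\in\N_0$, so Lemma~\ref{L-2Sez2}(i) gives $\sigma_{pt}(C_t;\ell^p)=\Lambda$; since $C_t$ is compact and $\Lambda$ is infinite, Lemma~\ref{L-2Sez2}(ii)(b) yields $\sigma(C_t;\ell^p)=\Lambda_0$, which is \eqref{4.3}, \eqref{4.4} and \eqref{4.5}. Condition (i) of Theorem~\ref{T-general} holds because $x_t^{[0]}=(t^n)_{n\in\N_0}\in\ell^1\subseteq\ell^p$ (and $\in\ell^\infty$), and condition (ii) is the compactness just established; hence $C_t$ is power bounded and uniformly mean ergodic, with $\|(C_t)_{[n]}-P\|_{\ell^p\to\ell^p}\to0$, where $P$ is the projection onto ${\rm span}\{x_t^{[0]}\}$ along $(I-C_t)(\ell^p)$. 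Finally $c_{00}\subseteq\ell^p$ for every $1\le p\le\infty$, so Theorem~\ref{T-2}(ii) (equivalently Lemma~\ref{L-nuovo}; for $p=\infty$ also non-separability) shows $C_t$ is not supercyclic.

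The genuinely new content here is slight, since everything is funnelled through Section~3; the only points requiring a little care are the compactness of $C_t$ on $\ell^\infty$ (where the ``diagonal-tending-to-zero'' heuristic must be replaced by the explicit total-boundedness argument above, or sidestepped via the finite-rank truncations) and the exact $\ell^1$-norm identity, and both are routine.
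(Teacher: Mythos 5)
Your proposal is correct, and all the Section~3 machinery is invoked exactly as intended: Lemma~\ref{L-2Sez2} plus the fact that every $x_t^{[m]}$ lies in $d_1\subseteq\ell^1\subseteq\ell^p\subseteq\ell^\infty$ gives the spectra, Theorem~\ref{T-general} gives power boundedness and uniform mean ergodicity, and Lemma~\ref{L-nuovo} (or non-separability for $p=\infty$) rules out supercyclicity. The difference from the paper is in how the ``hands-on'' ingredients are obtained. The paper simply cites the literature for the norm identities, compactness and spectra --- \cite{SEl-S} for $\ell^1$ and $\ell^\infty$, \cite{YD} for $1<p<\infty$ --- and for $\ell^1$ and $\ell^p$ it also cites \cite[Theorem 6.6]{ABR-9} wholesale for the ergodic conclusions, reserving the Theorem~\ref{T-general} route only for $\ell^\infty$. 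You instead prove these facts directly: the exact column-sum computation on $\ell^1$ and row-sum computation on $\ell^\infty$ are correct, the bound $\|C_t\|_{\ell^p\to\ell^p}\le p'$ via Proposition~\ref{P-Nuova-1} is a legitimate (and pleasingly internal) substitute for \cite[Theorem 2]{YD}, and both of your compactness arguments work --- the tail estimate $\|(I-P_N)C_t\|\le((N+1)(1-t))^{-1}$ on $\ell^1$ and $\ell^\infty$ with Riesz--Thorin in between, and the factorization $C_t=D_\varphi M_g$ with $M_g=\sum_n t^nS^n$, which is precisely the factorization the paper itself exploits in Sections 10--12 for other spaces. What your version buys is self-containment and uniformity across all three cases; what the paper's version buys is brevity and the finer information recorded in the cited sources. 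No gaps.
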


\begin{proof} (i) It is known that $C_t\in \cL(\ell^1)$ satisfies $\|C_t\|_{\ell^1\to\ell^1}=t^{-1}\log(1/(1-t))$, for $t\in (0,1)$, and $\|C_0\|_{\ell^1\to\ell^1}=1$; see \cite[Corollary 8.3]{SEl-S}. Furthermore, $C_t$ is  compact in $\ell^1$ with spectra given by \eqref{4.3},
\cite[Theorem 8.4]{SEl-S}. According to \cite[Theorem 6.6]{ABR-9} the operator $C_t\in \cL(\ell^1)$ is power bounded and uniformly mean ergodic but, not supercyclic.

(ii) Note that $x_t^{[0]}=(t^n)_{n\in\N_0}\in \ell^\infty$. Moreover, $C_t\in \cL(\ell^\infty)$ satisfies $\|C_t\|_{\ell^\infty\to\ell^\infty}=1$, \cite[Corollary 6.1]{SEl-S}, and is a compact operator with spectra given by \eqref{4.4}; see
\cite[Lemma 6.1, Theorems 6.1 \& 6.2]{SEl-S}. Hence, Theorem \ref{T-general} with $X=\ell^\infty$ shows that $C_t\in \cL(\ell^\infty)$ is power bounded (also clear from $\|C_t\|_{\ell^\infty\to\ell^\infty}=1$) and uniformly mean ergodic. Since $\ell^\infty$ is non-separable, $C_t$ is not supercyclic in $\ell^\infty$.

(iii) For each $t\in [0,1)$ the operator $C_t\in \cL(\ell^p)$ satisfies $\|C_t\|_{\ell^p\to \ell^p}\leq p/(p-1)$, \cite[Theorem 2]{YD}, and is  compact, \cite[Theorem 6]{YD},  with spectra given by \eqref{4.5}; see
\cite[Theorems 8 \& 10]{YD}. According to \cite[Theorem 6.6]{ABR-9}, the operator $C_t\in \cL(\ell^p)$ is power bounded, uniformly mean ergodic but, not supercyclic.\end{proof}

\section{The spaces $c$ and $c_0$}

An element $x\in\omega$ belongs to $c$ if and only if $\lim_{n\to\infty}x_n$ exists in $\C$. Then 
\[
\|x\|_c:=\sup_{n\in\N_0}|x_n|,\quad x\in c,
\]
is a norm in $c$. 
Moreover, $c_0:=\{x\in c\, :\, \lim_{n\to\infty}x_n=0\}$ is a proper, closed subspace of $c$. Clearly, $\cE\subseteq c_0\subseteq c$ and both $c_0$ and $c$ are BK-spaces, \cite[p.109]{Wi}.

\begin{prop}\label{P.5.1} For $t=1$ the operator norms of $C_1$ are given by
	\[
	\|C_1\|_{c_0\to c_0}=1=\|C_1\|_{c\to c}.
	\]
	Concerning the spectra we have that 
	\[
	\sigma_{pt}(C_1;c_0)=\emptyset \ \mbox{ and }\ \sigma_{pt}(C_1;c)=\{1\},
		\]
		whereas
	\[
 \sigma(C_1;c_0)=\sigma(C_1,c)=\left\{z\in \C\,:\; \left|z-\frac{1}{2}\right|\leq \frac{1}{2}\right\}.
	\]
	Moreover, $C_1$ fails to be either compact, mean ergodic or supercyclic in both $c_0$ and $c$.
	\end{prop}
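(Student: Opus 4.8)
The plan is to handle the norms, the point spectra, the full spectrum, and the three negative assertions in turn, reducing each statement about $c$ to the corresponding one about $c_0$ by means of the topological direct sum $c=c_0\oplus\C\mathbbm 1$. For the norms: $C_1$ maps $c_0$ into $c_0$ and $c$ into $c$ by the classical fact that the Ces\`aro means of a null (resp.\ convergent) sequence are again null (resp.\ convergent to the same limit), and $|(C_1x)_n|=\bigl|\frac1{n+1}\sum_{i=0}^n x_i\bigr|\le\|x\|_\infty$ gives $\|C_1\|\le1$ on each space; the reverse inequality follows from $C_1e_0=(\frac1{n+1})_{n\in\N_0}\in c_0$ (of sup-norm $1$) for $c_0$ and from $C_1\mathbbm 1=\mathbbm 1$ for $c$. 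In particular $\|C_1^n\|\le1$, so $C_1$ is power bounded on both spaces, a fact used repeatedly below.

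For the point spectra I would apply Proposition \ref{P.SimpleFact1}(i) with $\omega\supseteq Y\in\{c_0,c\}$ and $T=C_1^\omega$: by \eqref{eq.prop}, the eigenvalues of $C_1$ in $Y$ are exactly those $\lambda$ for which $C_1^\omega$ possesses an eigenvector lying in $Y$. So I would solve $C_1^\omega x=\lambda x$ directly in $\omega$; writing $s_n:=x_0+\dots+x_n$, the equation becomes $s_n=\lambda(n+1)(s_n-s_{n-1})$, whose $n=0$ instance forces $s_0=0$ unless $\lambda=1$, and a short induction then shows that the eigenvalues of $C_1^\omega$ are precisely the numbers $\frac1{m+1}$ ($m\in\N_0$), each geometrically simple with eigenvector $x^{[m]}$ given by $x^{[m]}_j=\binom{j}{m}$. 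Since $x^{[0]}=\mathbbm 1$ while $x^{[m]}$ is unbounded for $m\ge1$, no eigenvector of $C_1^\omega$ lies in $c_0$ and the only one lying in $c$ (indeed in $\ell^\infty$) is $\mathbbm 1$, with eigenvalue $1$; hence $\sigma_{pt}(C_1;c_0)=\emptyset$ and $\sigma_{pt}(C_1;c)=\{1\}$.

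For the full spectrum I would quote the classical computation $\sigma(C_1;c_0)=\{z\in\C:|z-\frac12|\le\frac12\}$ from \cite{Ak,Le,Re}. Since $C_1(c_0)\subseteq c_0$, $C_1\mathbbm 1=\mathbbm 1$, and $x\mapsto x-(\lim_n x_n)\mathbbm 1$ is a continuous projection of $c$ onto $c_0$ commuting with $C_1$, the decomposition $c=c_0\oplus\C\mathbbm 1$ is topological and $C_1$-invariant, so $C_1$ acting on $c$ is the direct sum of $C_1|_{c_0}$ and the identity on $\C\mathbbm 1$; consequently $\sigma(C_1;c)=\sigma(C_1;c_0)\cup\{1\}=\{z\in\C:|z-\frac12|\le\frac12\}$, the point $1$ already lying on the boundary circle $|z-\frac12|=\frac12$. (Alternatively one may quote \cite{Le} directly for $\sigma(C_1;c)$.)

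Finally, the negative properties. As the spectrum is an uncountable (disk-shaped) set, $C_1$ is not compact on $c_0$ nor on $c$, a compact operator on an infinite-dimensional Banach space having countable spectrum. For mean ergodicity, a direct computation gives $(C_1'y)_k=\sum_{n\ge k}\frac{y_n}{n+1}$ for the adjoint of $C_1$ on $c_0'=\ell^1$, whence $C_1'e_0=e_0$, so $1\in\sigma_{pt}(C_1';c_0')$; combined with $1\notin\sigma_{pt}(C_1;c_0)$ and power boundedness, Proposition \ref{P.Non-ME} yields that $C_1$ is not mean ergodic on $c_0$, and then, since $c_0$ is a closed $C_1$-invariant subspace of $c$, Proposition \ref{P.SimpleFact2}(iii) transfers this to $c$. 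Non-supercyclicity of $C_1$ on $c_0$ and on $c$ is immediate from Lemma \ref{L-nuovo}, since $c_{00}\subseteq c_0\subseteq c$. The only step drawing on input from outside Sections 2--3 is the disk formula for $\sigma(C_1;c_0)$, and I expect that (rather than anything else in the argument) to be the principal point; everything concerning $c$ and all the negative assertions then follow by the transfer machinery already established.
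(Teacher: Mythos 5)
Your proposal is correct, and every step checks out: the eigenvector computation in $\omega$ does give $x^{[m]}_j=\binom{j}{m}$ (so only $\mathbbm 1$ survives in $c$ and nothing in $c_0$), the formula $(C_1'y)_k=\sum_{n\ge k}\frac{y_n}{n+1}$ on $\ell^1=c_0'$ does give $C_1'e_0=e_0$, and the projection $x\mapsto x-(\lim_nx_n)\mathbbm 1$ does commute with $C_1$ because Ces\`aro means preserve limits, so $\sigma(C_1;c)=\sigma(C_1;c_0)\cup\{1\}$ as you claim. The route, however, is genuinely different from the paper's. The paper disposes of the whole proposition by citation: the operator norms are declared routine, the spectra and point spectra are quoted from Leibowitz and Reade for both $c_0$ and $c$, and non--mean-ergodicity is quoted from \cite[Theorem 4.3]{ABR00}; only non-compactness (from the shape of the spectrum) and non-supercyclicity (Lemma \ref{L-nuovo}) are argued in place. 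You instead derive the point spectra from first principles via Proposition \ref{P.SimpleFact1}(i) applied to $C_1^\omega$, obtain $\sigma(C_1;c)$ from $\sigma(C_1;c_0)$ through the $C_1$-invariant topological splitting $c=c_0\oplus\C\mathbbm 1$, and prove non--mean-ergodicity on $c_0$ by Sine's criterion (Proposition \ref{P.Non-ME}) from the adjoint fixed point $e_0$, transferring to $c$ by Proposition \ref{P.SimpleFact2}(iii). What your version buys is self-containment --- the only imported fact is the disk formula for $\sigma(C_1;c_0)$ --- and it is also more in the spirit of the paper's own toolkit: your mean-ergodicity argument is exactly the one the authors use for $ces_0$ in Proposition \ref{P.8.1}(ii), and your direct-sum trick gives the spectrum on $c$ without a separate citation. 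What the paper's version buys is brevity and a record of where the classical spectral computations live in the literature.
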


\begin{proof} It is routine to verify that $\|C_1\|_{c_0\to c_0}=1=\|C_1\|_{c\to c}$ and hence, both $C_1\in \cL(c_0)$ and $C_1\in \cL(c)$ are power bounded. Concerning the spectra we refer to
 \cite[Theorems 3 \& 5]{Le} and \cite[Theorems 1 \& 3]{Re}. In particular, both $C_1\in \cL(c_0)$ and $C_1\in \cL(c)$ fail to be compact. It is known that both $C_1\in \cL(c_0)$ and $C_1\in \cL(c)$ are not mean ergodic, \cite[Theorem 4.3]{ABR00}. Lemma \ref{L-nuovo} shows that neither $C_1\in \cL(c)$ nor $C_1\in \cL(c_0)$ is supercyclic.\end{proof}

\begin{prop}\label{P.5.2} Let $0\leq t<1$. The operator norms of $C_t$ are given by 
	\[
	\|C_t\|_{c_0\to c_0}=1=\|C_t\|_{c\to c}=1.
	\]
	Moreover, $C_t$  is compact in both $c_0$ and $c$ and has spectra
	\[
	\sigma_{pt}(C_t;c_0)=\sigma_{pt}(C_t;c)=\Lambda \ \mbox{ and }\ \sigma(C_t;c_0)=\sigma(C_t;c)=\Lambda_0.
	\]
	Also, $C_t$ is power bounded and uniformly mean ergodic in both $c_0$ and $c$ but, fails to be supercyclic in both $c_0$ and $c$.
	\end{prop}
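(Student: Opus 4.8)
The plan is to deduce everything from the general machinery already established in Section~3, so that the only genuinely space-specific work is checking the hypotheses of Theorem~\ref{T-general}, Lemma~\ref{L-2Sez2} and Lemma~\ref{L-nuovo} for $X=c_0$ and $X=c$.

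First I would record that $C_t$ exists in both spaces with the stated norm. For $x\in\ell^\infty$ formula \eqref{Ces-op} gives $|(C_tx)_n|\le \frac{1}{n+1}\sum_{j=0}^n t^j\,\|x\|_\infty$; bounding $\sum_{j=0}^n t^j$ by $n+1$ yields $\|C_tx\|_\infty\le\|x\|_\infty$, while bounding it by $(1-t)^{-1}$ yields $|(C_tx)_n|\le \frac{\|x\|_\infty}{(n+1)(1-t)}\to 0$, so in fact $C_t(\ell^\infty)\subseteq c_0$ (this is the estimate in Remark~\ref{remark3.}). In particular $C_t(c_0)\subseteq c_0$ and $C_t(c)\subseteq c_0\subseteq c$, hence $C_t\in\cL(c_0)$ and $C_t\in\cL(c)$ with norm $\le 1$; testing on $e_0$, where $C_te_0=(t^n/(n+1))_{n\in\N_0}$ has sup-norm $1$, forces $\|C_t\|_{c_0\to c_0}=\|C_t\|_{c\to c}=1$.

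Next, compactness. The estimate $\sup_{\|x\|_\infty\le 1}|(C_tx)_n|\le \frac{1}{(n+1)(1-t)}\to 0$ shows that $C_t(\cU_{c_0})$ has equi-small tails and is therefore relatively compact in $c_0$; since $C_t(\cU_c)$ already lies inside a fixed multiple of that set (the image is contained in $c_0$), $C_t$ is compact in $c$ as well. Alternatively one may invoke that $C_t^\infty\in\cL(\ell^\infty)$ is compact (\cite[Corollary~6.1, Lemma~6.1]{SEl-S}) and restrict it to the closed $C_t$-invariant subspaces $c_0$ and $c$ via Proposition~\ref{P.SimpleFact2}(i) (compactness in $c$ and $c_0$ is also in \cite{SEl-S,YM}). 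With compactness available, Lemma~\ref{L-2Sez2}(i) reduces the point spectrum to $\{\frac{1}{m+1}:x_t^{[m]}\in X\}$. Here I would verify that \emph{every} $x_t^{[m]}$ lies in $c_0$: for $m=0$ this is $(t^n)_n\to 0$, and for $m\ge 1$ the nonzero entries are $(x_t^{[m]})_{m+n}=\binom{m+n}{n}t^n$, which tends to $0$ because $\binom{m+n}{n}\le (n+1)^m$ and $(n+1)^m t^n\to 0$ for $0\le t<1$. Hence $\sigma_{pt}(C_t;c_0)=\sigma_{pt}(C_t;c)=\Lambda$, and since this set is infinite, Lemma~\ref{L-2Sez2}(ii)(b) gives $\sigma(C_t;c_0)=\sigma(C_t;c)=\Lambda_0$.

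Finally, since $x_t^{[0]}=(t^n)_{n\in\N_0}\in c_0\subseteq c$ and $C_t$ is compact in each of these spaces, Theorem~\ref{T-general} applies and yields that $C_t$ is power bounded (also immediate from $\|C_t\|=1$), uniformly mean ergodic, and $\|(C_t)_{[n]}-P\|_{X\to X}\to 0$, where $P$ is the projection onto $\mathrm{span}\{x_t^{[0]}\}$ along $(I-C_t)(X)$. Non-supercyclicity follows from Lemma~\ref{L-nuovo} since $c_{00}\subseteq c_0\subseteq c$; note that the non-separability shortcut used for $\ell^\infty$ is unavailable here because $c_0$ and $c$ are separable. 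The only real obstacle is the decay estimate showing that all the candidate eigenvectors $x_t^{[m]}$ land in $c_0$ — this is the single place where the concrete structure of the space is used; once it is settled, the remaining assertions are formal consequences of the results of Section~3.
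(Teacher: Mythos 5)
Your proposal is correct, and its overall architecture coincides with the paper's: once compactness, the norm, and the location of the eigenvectors are in hand, both arguments invoke Theorem \ref{T-general} (using $x_t^{[0]}\in c_0\subseteq c$) for power boundedness and uniform mean ergodicity, and Lemma \ref{L-nuovo} for non-supercyclicity. Where you genuinely diverge is in how the space-specific ingredients are obtained. The paper simply cites \cite{SEl-S} (Corollaries 8.1--8.2, Lemma 8.1, Theorems 8.1--8.3) and \cite{YM} for the norm identities, compactness and the spectra, whereas you derive all of these from scratch: the bound $\sum_{j=0}^n t^j\le n+1$ for $\|C_t\|\le 1$ with $e_0$ as the norming vector; the sharper bound $\sum_{j=0}^n t^j\le (1-t)^{-1}$ to get $C_t(\ell^\infty)\subseteq c_0$ and the equi-small-tails criterion for relative compactness in $c_0$ (which in fact yields compactness on all of $\ell^\infty$, consistent with Proposition \ref{P.4.2}(ii)); and the estimate $\binom{m+n}{n}=\prod_{i=1}^m\frac{n+i}{i}\le (n+1)^m$ to place every $x_t^{[m]}$ in $c_0$, after which Lemma \ref{L-2Sez2}(i)--(ii)(b) delivers $\sigma_{pt}=\Lambda$ and $\sigma=\Lambda_0$. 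All of these estimates are correct. What your route buys is a self-contained proof independent of the external references; what the paper's route buys is brevity. Your closing remark that the non-separability shortcut used for $\ell^\infty$ is unavailable here, so that Lemma \ref{L-nuovo} is genuinely needed, is also accurate.
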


\begin{proof} Consider first the BK-space $c_0$. According to \cite[Corollary 8.1 \& Lemma 8.1]{SEl-S} the operator $C_t\in\cL(c_0)$ satisfies $\|C_t\|_{c_0\to c_0}=1$ (hence, $C_t$ is power bounded) and is compact. Moreover, Theorem 8.1(2),(3) of \cite{SEl-S} yields
\[
\sigma_{pt}(C_t;c_0)=\Lambda \ \mbox{ and }\ \sigma(C_t;c_0)=\Lambda_0.
\]
Since $x_t^{[0]}\in\ell^1\subseteq c_0$ and $C_t\in\cL(c_0)$ is compact, Theorem \ref{T-general} shows that $C_t$ is uniformly mean ergodic. That $C_t$ is not supercyclic is clear from Lemma \ref{L-nuovo}.

For the BK-space $c$ it is the case that $\|C_t\|_{c\to c}=1$, \cite[Corollary 8.2]{SEl-S}. Moreover, $C_t\in \cL(c)$ is a compact operator, \cite[Theorem 7]{YM}, with spectra
\[
\sigma_{pt}(C_t;c)=\Lambda \ \mbox{ and }\ \sigma(C_t;c)=\Lambda_0,
\]
\cite[Theorems 8.2 \& 8.3(1)]{SEl-S}. The same argument as above for the space $c_0$ shows that $C_t\in \cL(c)$ is uniformly mean ergodic, but not supercyclic.
\end{proof}

\section{The space $cs$}

An element $x\in\omega$ belongs to the space $cs$ of convergent series if and only if $\sum_{n=0}^\infty x_n$ converges in $\C$, \cite[p.240 \& pp.339-340]{DS}. Equipped with the norm
\[
\|x\|_{cs}:=\sup_{n\in\N_0}\left|\sum_{k=0}^nx_k\right|,\quad x\in cs,
\]
$cs$ is a BK-space, $\cE\subseteq cs$ and is a basis for $cs$, \cite[p.59]{Wi}, and $cs\subseteq c_0$.

Since $e_0\in cs$ and $C_1e_0=\left(\frac{1}{n+1}\right)_{n\in\N_0}\not\in cs$, the operator $C_1$ does not exist in $cs$. 

\begin{prop}\label{P.6.1} Let $0\leq t<1$. Then  $\|C_0\|_{cs\to cs}=1$ and 
	\begin{equation}\label{6.1}
		 \|C_t\|_{cs\to cs}=t^{-1}\log (1/(1-t)),\quad t\in (0,1).
	\end{equation}
The operator $C_t\in \cL(cs)$ is compact and has spectra
\begin{equation}\label{6.2}
\sigma_{pt}(C_t;cs)=\Lambda \ \mbox{ and }\ \sigma(C_t;cs)=\Lambda_0.
\end{equation}
Moreover, $C_t$ is power bounded and uniformly mean ergodic in $cs$ but, not  supercyclic.
\end{prop}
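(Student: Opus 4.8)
The plan is to verify the three hypotheses of Theorem~\ref{T-general} for the BK-space $X=cs$, together with the supercyclicity conclusion coming from Lemma~\ref{L-nuovo}, and to supplement this with the norm computations and the identification of the spectrum. First I would establish that $C_t$ exists in $cs$ and compute its norm. The key tool is Proposition~\ref{P-Nuova-1}: however, $cs$ is \emph{not} solid and its norm is not an (absolute) Riesz norm (as observed in the paragraph preceding Proposition~\ref{P-Nuova-1}), so that route is unavailable. Instead one works directly. Given $x\in cs$, set $s_n:=\sum_{k=0}^n x_k$, so $(s_n)$ is convergent hence bounded, and one computes the partial sums $\sum_{n=0}^N (C_t x)_n$ by interchanging the order of summation; a telescoping/Abel-type manipulation should express this in terms of the $s_n$'s with coefficients summing (in absolute value) to $t^{-1}\log(1/(1-t))$ for $t\in(0,1)$, and to $1$ for $t=0$ (where $C_0=D_\varphi$ and $(D_\varphi x)_n=x_n/(n+1)$, so $\sum_{n=0}^N x_n/(n+1)$ is handled by summation by parts against the bounded sequence $(s_n)$). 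This simultaneously shows $C_t x\in cs$ and gives the upper bound in \eqref{6.1}; sharpness follows by testing on truncations of a suitable sequence (e.g.\ one making the $s_n$ essentially constant of modulus one up to a large index), exactly as in the $\ell^1$ computation of \cite[Corollary~8.3]{SEl-S}, to which the $cs$ estimate can in fact be reduced since $\|x\|_{cs}=\|(s_n)\|_\infty$ and the map $x\mapsto(s_n)$ is an isometry of $cs$ onto $c$.

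Next I would record that $C_t\in\cL(cs)$ is compact. Since $cs\subseteq c_0$ with continuous inclusion and $C_t^{\omega}$ maps $cs$ into $cs$, and since $C_t\in\cL(c_0)$ is compact by Proposition~\ref{P.5.2} (via \cite[Corollary~8.1 \& Lemma~8.1]{SEl-S}), one cannot immediately invoke Proposition~\ref{P.SimpleFact2}(i) because $cs$ is not a closed \emph{subspace of} $c_0$ in the relevant sense for that proposition; rather, the cleanest argument is to show directly that the closed unit ball $\cU_{cs}$ has $C_t(\cU_{cs})$ relatively compact in $cs$. One way: factor through $c$ via the isometry $J\colon x\mapsto(s_n)$; then $J C_t J^{-1}$ is an explicit operator on $c$ whose matrix has columns tending to $0$ in $c$ and which is a norm-limit of finite-rank operators, so it is compact on $c$, hence $C_t$ is compact on $cs$. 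Alternatively, exhibit $C_t$ as a norm limit of the finite-rank truncations $P_{(N)}C_t$ directly in $cs$, using the tail estimate $|(C_t x)_n|\le \|x\|_\infty/((n+1)(1-t))$ from Remark~\ref{remark3.} together with control of the partial sums of the tail. Either way, compactness of $C_t\in\cL(cs)$ follows.

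With compactness in hand, hypothesis~(i) of Theorem~\ref{T-general} is the assertion $x_t^{[0]}=(t^n)_{n\in\N_0}\in cs$, which is immediate for $t\in[0,1)$ since $\sum_n t^n=1/(1-t)$ converges; hypothesis~(ii) is compactness, just established. Theorem~\ref{T-general} then yields that $C_t$ is power bounded and uniformly mean ergodic in $cs$, with $\|(C_t)_{[n]}-P\|_{cs\to cs}\to0$, $P$ the projection onto $\mathrm{span}\{x_t^{[0]}\}$ along $(I-C_t)(cs)$. The spectrum follows from Lemma~\ref{L-2Sez2}: by part~(i), $\sigma_{pt}(C_t;cs)=\{\tfrac{1}{m+1}:m\in\N_0,\ x_t^{[m]}\in cs\}$, and one checks $x_t^{[m]}\in cs$ for every $m$ because its coordinates are $\binom{m+n}{n}t^n$ for $n\ge0$, and $\sum_{n\ge0}\binom{m+n}{n}t^n=(1-t)^{-(m+1)}<\infty$; hence $\sigma_{pt}(C_t;cs)=\Lambda$, an infinite set, and Lemma~\ref{L-2Sez2}(ii)(b) gives $\sigma(C_t;cs)=\Lambda_0$, which is \eqref{6.2}. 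Finally, since $\cE\subseteq cs$, in particular $c_{00}\subseteq cs$, Lemma~\ref{L-nuovo} shows $C_t$ is not supercyclic in $cs$. The main obstacle is the first step: because $cs$ lacks the solidity/Riesz-norm structure that makes the $\ell^p$ and $c_0$ cases routine, both the proof that $C_t$ maps $cs$ into itself and the exact norm computation require the change of variables $x\mapsto(s_n)$ and a careful Abel-summation estimate rather than a direct lattice comparison; once $C_t\in\cL(cs)$ and its compactness are secured, the remaining assertions are formal consequences of the general theorems in Sections~2 and~3.
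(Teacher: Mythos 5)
Your argument is correct in outline and, for the second half of the statement, coincides with the paper's: both proofs feed the compactness of $C_t$ and the membership $x_t^{[0]}\in\ell^1\subseteq cs$ into Theorem \ref{T-general} to get power boundedness and uniform mean ergodicity, and both invoke Lemma \ref{L-nuovo} (via $c_{00}\subseteq cs$) for non-supercyclicity. Where you diverge is the first half: the paper simply cites \cite[Corollary 4.1, Lemma 4.2, Theorem 4.1]{SEl-S} for the norm identity \eqref{6.1}, the compactness of $C_t\in\cL(cs)$ and the spectra \eqref{6.2}, whereas you sketch self-contained derivations. Your route via the isometry $J\colon x\mapsto (s_n)$ of $cs$ onto $c$ is sound: after Abel summation the coefficients $c_{N,k}-c_{N,k+1}$ (with $c_{N,k}=\sum_{n=k}^{N}t^{n-k}/(n+1)$) are nonnegative, so the relevant row sums equal $c_{N,0}\leq t^{-1}\log(1/(1-t))$, and sharpness already follows from the single test vector $e_0$, since $\|C_te_0\|_{cs}=\sum_{n}t^n/(n+1)$; your observation that Proposition \ref{P-Nuova-1} and Proposition \ref{P.SimpleFact2}(i) are both unavailable (because $\|\cdot\|_{cs}$ is not a Riesz norm and $cs$ does not carry the topology induced by $c_0$) is accurate and worth making explicit. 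Likewise, deriving \eqref{6.2} from Lemma \ref{L-2Sez2} by checking $(x_t^{[m]})_{m+n}=\binom{m+n}{n}t^n$ summable, hence $x_t^{[m]}\in\ell^1\subseteq cs$ for every $m$, is a legitimate alternative to citing \cite{SEl-S}. The only places where your write-up remains a sketch rather than a proof are the exact identification of the conjugated matrix $JC_tJ^{-1}$ on $c$ (the ``reduction to the $\ell^1$ computation'' is suggestive but not a formal duality argument) and the compactness step, where the tail bound of Remark \ref{remark3.} alone does not suffice (the harmonic tail is not summable) and one genuinely needs the partial-sum control or the factorization $C_t=D_\varphi R_t$ with $S$ an isometry of $cs$ and $D_\varphi$ compact on $cs$; either of these can be completed, so there is no gap in principle, only details left to the reader that the paper avoids by citation.
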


\begin{proof}
According to \cite[Corollary 4.1]{SEl-S} the operator $C_t\in \cL(cs)$
 satisfies \eqref{6.1} with 
  $\|C_0\|_{cs\to cs}=1$. Moreover, $C_t$ is compact, \cite[Lemma 4.2]{SEl-S}, with spectra given by \eqref{6.2},
 \cite[Theorem 4.1(1),(2)]{SEl-S}. Since $x_t^{[0]}\in\ell^1\subseteq cs$ and $C_t$ is compact in $cs$, Theorem \ref{T-general} shows that $C_t$ is power bounded and uniformly mean ergodic. Lemma \ref{L-nuovo} implies that $C_t$ is not supercyclic in $cs$.
 \end{proof}

 \section{The Bachelis spaces $N^p$ for $1<p<\infty$}
 
 These spaces, which arise in classical harmonic analysis, \cite{Bac}, are considered in \cite[Section 2]{CR2} and in \cite[Section 6]{CR4} in relation to the operators $C_t$. The Bachelis spaces $N^p$, for $1<p<\infty$, are reflexive BK-spaces and $\cE\subseteq N^p$ is an unconditional basis, \cite[p.25]{CR4}. Moreover, $\ell^1\subseteq N^p\subseteq c_0$ with continuous inclusions and so $N^p\subseteq \omega$ continuously, \cite[p.25]{CR4}.
 
 \begin{prop}\label{P.7.1} Let $t=1$ and $1<p<\infty$. The operator $C_1\in \cL(N^p)$ and has spectra which satisfy
 	\begin{equation}\label{7.1}
 	\sigma_{pt}(C_1;N^p)=\emptyset \ \mbox{ and }\ \sigma(C_1;N^p)\supseteq \left\{z\in\C\,:\, \left|z-\frac{p}{2}\right|\leq \frac{p}{2}\right\}.
 	\end{equation}
 In particular, $C_1$ is not compact in $N^p$. Moreover, $C_1$ is not power bounded and not mean ergodic in $N^p$ and it fails to be supercyclic.
 	\end{prop}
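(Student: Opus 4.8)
The plan is to reduce the statement to the spectral picture of $C_1$ on $N^p$, whose essential part is already in the literature. First I would record, from \cite[Section 2]{CR2} (see also \cite[Section 6]{CR4}), that $C_1^\omega$ maps $N^p$ into itself, so that $C_1\in\cL(N^p)$ by Lemma \ref{L-1-Sez2}, and that
\[
\sigma(C_1;N^p)\supseteq\left\{z\in\C\,:\,\left|z-\tfrac{p}{2}\right|\leq\tfrac{p}{2}\right\}.
\]
This containment is the only substantial ingredient; everything else below is a soft consequence. I expect the proof of the containment in \cite{CR2} to proceed either by transporting $C_1$ acting in $N^p$ to a Ces\`aro-type operator on a concrete function space with known spectrum, or by exhibiting, for each $\lambda$ in the open disk, a normalized sequence $(x^{(k)})_{k}\subseteq N^p$ with $\|(\lambda I-C_1)x^{(k)}\|_{N^p}\to 0$; were one to reprove this here, handling the norm of $N^p$ (which, unlike that of $\ell^p$, is not given by a single sum) would be the main obstacle.

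Next I would show $\sigma_{pt}(C_1;N^p)=\emptyset$. If $x\in N^p\setminus\{0\}$ satisfied $C_1x=\lambda x$, then, since $N^p\subseteq c_0$ with continuous inclusion, also $x\in c_0\setminus\{0\}$ and $C_1x=\lambda x$ in $c_0$, whence $\lambda\in\sigma_{pt}(C_1;c_0)=\emptyset$ by Proposition \ref{P.5.1} --- a contradiction. (This is Proposition \ref{P.SimpleFact1}(i) with ambient space $c_0$; alternatively, one checks directly that every eigenvector of $C_1^\omega$ in $\omega$ --- the constant sequence for the eigenvalue $1$, and a sequence growing polynomially for the eigenvalue $\tfrac{1}{m+1}$, $m\geq 1$ --- fails to lie in $c_0\supseteq N^p$.)

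It remains to exclude the four properties. Since $\cE\subseteq N^p$, the space $N^p$ is infinite-dimensional and contains $c_{00}$; as the spectrum of a compact operator on an infinite-dimensional Banach space is countable, while $\sigma(C_1;N^p)$ contains an uncountable disk, $C_1$ is not compact. That disk also contains the real point $p$, and $|p|=p>1$ because $1<p<\infty$, so $\sigma(C_1;N^p)\not\subseteq\overline{B(0,1)}$. Hence, by Proposition \ref{P.inclusioneD} (applicable since the Banach space $N^p$ is barrelled and sequentially complete), $C_1$ is not power bounded; and if $C_1$ were mean ergodic, then \eqref{eq.Tende0} would give $\tfrac{C_1^n}{n}\to 0$ in $\cL_s(N^p)$, and Proposition \ref{P.inclusioneD} would then force $\sigma(C_1;N^p)\subseteq\overline{B(0,1)}$, a contradiction; so $C_1$ is not mean ergodic. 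Finally, $c_{00}\subseteq N^p$, $N^p\subseteq\omega$ with continuous inclusion and $C_1\in\cL(N^p)$, so Lemma \ref{L-nuovo} shows $C_1$ is not supercyclic.
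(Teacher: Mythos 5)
Your proposal is correct and follows essentially the same route as the paper: cite \cite[Theorem 2.5]{CR2} for the membership $C_1\in\cL(N^p)$ and the spectral facts in \eqref{7.1}, deduce non-compactness from the uncountable disk in the spectrum, obtain failure of power boundedness and mean ergodicity from Proposition \ref{P.inclusioneD} via $\sigma(C_1;N^p)\not\subseteq\overline{B(0,1)}$, and rule out supercyclicity with Lemma \ref{L-nuovo}. The only (harmless) difference is that you re-derive $\sigma_{pt}(C_1;N^p)=\emptyset$ from $N^p\subseteq c_0$ and Proposition \ref{P.SimpleFact1}(i) together with $\sigma_{pt}(C_1;c_0)=\emptyset$, whereas the paper simply takes this from the cited reference.
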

 
 \begin{proof} It is known that $C_1\in \cL(N^p)$ for all $1<p<\infty$ and that \eqref{7.1} is satisfied,
 \cite[Theorem 2.5]{CR2}. In particular, $C_1$ is not compact in $N^p$. Since $\sigma(C_1;N^p)\not\subseteq \overline{B(0,1)}$, it follows from Proposition \ref{P.inclusioneD} that $C_1$ is not power bounded. If $C_1$ was mean ergodic, then $C_1^n/n\to 0$ in $\cL_s(N^p)$, via the discussion after \eqref{eq.Tende0}, and so $\sigma(C_1;N^p)\subseteq \overline{B(0,1)}$ by Proposition \ref{P.inclusioneD}. Since this is not the case,  $C_1\in\cL(N^p)$ fails to be mean ergodic. Lemma \ref{L-nuovo} implies that $C_1$ is not supercyclic in $N^p$.\end{proof}
 
 The situation for $t\in [0,1)$ is quite different.
 
 \begin{prop}\label{P.7.2} Let $0\leq t<1$ and $1<p<\infty$. The operator norm of $C_t$ satisfies
 	\begin{equation}\label{7.2}
 		\|C_t\|_{N^p\to N^p}\leq 1/(1-t).
 	\end{equation}
 Moreover, $C_t$ is compact in $N^p$ and has spectra
 \begin{equation}\label{7.3}
 \sigma_{pt}(C_t;N^p)=\Lambda \ \mbox{ and }\ \sigma(C_t;N^p)=\Lambda_0.
 \end{equation}
In addition, $C_t$ is power bounded and uniformly mean ergodic in $N^p$ but, it is not supercyclic.
 	\end{prop}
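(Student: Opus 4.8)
The plan is to repeat, for $X=N^p$, the scheme already used in Sections~4--6 for $\ell^p$, $c$, $c_0$ and $cs$: collect the purely operator-theoretic data for $C_t$ on $N^p$ (continuity with the norm bound \eqref{7.2}, compactness, and the spectra \eqref{7.3}), and then feed this into the general results Theorem~\ref{T-general} and Lemma~\ref{L-nuovo}. For the existence of $C_t\in\cL(N^p)$ one may argue directly: $N^p$ is solid with an absolute Riesz norm and $C_1\in\cL(N^p)$ by Proposition~\ref{P.7.1}, so Proposition~\ref{P-Nuova-1} gives $C_t\in\cL(N^p)$ at once; the sharper estimate $\|C_t\|_{N^p\to N^p}\le 1/(1-t)$ comes from the factorisation $C_t=D_\varphi\circ(I-tS)^{-1}$, $S$ the forward shift, since $\|D_\varphi\|_{N^p\to N^p}\le\|\varphi\|_\infty=1$ and $\|(I-tS)^{-1}\|_{N^p\to N^p}\le\sum_{k\ge0}t^k$. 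Both \eqref{7.2} and the compactness of $C_t\in\cL(N^p)$ are, in any case, available from the treatment of the Bachelis spaces in \cite[Section 6]{CR4} (cf.\ also \cite[Section 2]{CR2}). Note that, in contrast with the $\ell^\infty$, $c$, $c_0$, $cs$ cases, \eqref{7.2} does \emph{not} by itself give power boundedness, since $1/(1-t)>1$; the general ergodic theorem is genuinely needed here.

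For the spectra \eqref{7.3} one can argue from the general lemmas rather than quoting \cite{CR4}. Since $\ell^1\subseteq N^p$ continuously and each eigenvector $x_t^{[m]}$ in \eqref{eq.EigenvalueC} lies in $d_1\subseteq\ell^1$, we have $x_t^{[m]}\in N^p$ for every $m\in\N_0$; hence Lemma~\ref{L-2Sez2}(i) yields $\sigma_{pt}(C_t;N^p)=\Lambda$. As $\Lambda$ is infinite and $C_t\in\cL(N^p)$ is compact, Lemma~\ref{L-2Sez2}(ii)(b) then gives $\sigma(C_t;N^p)=\Lambda_0$.

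Power boundedness and uniform mean ergodicity now follow by applying Theorem~\ref{T-general} with $X=N^p$: hypothesis (i) holds because $x_t^{[0]}=(t^n)_{n\in\N_0}\in\ell^1\subseteq N^p$, and hypothesis (ii) is precisely the continuity plus compactness recorded above. Theorem~\ref{T-general} moreover gives $\|(C_t)_{[n]}-P\|_{N^p\to N^p}\to0$, where $P$ is the projection of $N^p$ onto $\mathrm{span}\{x_t^{[0]}\}$ along $(I-C_t)(N^p)$. Finally $\cE\subseteq N^p$ implies $c_{00}\subseteq N^p$ with $N^p\subseteq\omega$ continuously, so Lemma~\ref{L-nuovo} shows $C_t$ is not supercyclic in $N^p$. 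The only genuinely space-specific inputs — and therefore the main obstacle — are the estimate \eqref{7.2} and the compactness of $C_t$ on $N^p$; for the latter one must either check that $N^p$ satisfies the hypotheses of the Banach-lattice compactness results of \cite{CR4}, or prove directly that the finite-section truncations of $C_t$ converge to $C_t$ in the operator norm of $N^p$, exploiting $\ell^1\subseteq N^p\subseteq c_0$ and the geometric factor $t^{n-j}$ in \eqref{Ces-op}.
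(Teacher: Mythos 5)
Your proposal is, in substance, the paper's own proof: quote the treatment of the Bachelis spaces in \cite[Section 6]{CR4} for the norm bound \eqref{7.2} and the compactness of $C_t$, then apply Theorem \ref{T-general} (using $x_t^{[0]}\in\ell^1\subseteq N^p$) and Lemma \ref{L-nuovo}. Two of your variations deserve comment. Deriving the spectra \eqref{7.3} from Lemma \ref{L-2Sez2} via $\{x_t^{[m]}:m\in\N_0\}\subseteq d_1\subseteq\ell^1\subseteq N^p$ is a perfectly valid, slightly more self-contained alternative to citing \cite[Theorem 6.2]{CR4}, and it is exactly the route the paper takes for $bv_p$ and $h_d$. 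On the other hand, your inline justification of \eqref{7.2} via $C_t=D_\varphi(I-tS)^{-1}$ with $\|(I-tS)^{-1}\|_{N^p\to N^p}\le\sum_{k\ge0}t^k$ tacitly assumes that the right shift $S$ is a contraction on $N^p$, which you do not establish and which is not automatic: $N^p$ is a dyadic mixed-norm space, not rearrangement invariant, and shifting an entry across a block boundary can increase the $N^p$-norm when $1<p<2$, so $\|S\|_{N^p\to N^p}>1$ there. Your fallback to \cite[Section 6]{CR4} covers this, so the proof as a whole stands, but the estimate \eqref{7.2} really does rest on that citation (as in the paper) rather than on the geometric-series bound you sketch. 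The use of Proposition \ref{P-Nuova-1} together with Proposition \ref{P.7.1} to get mere existence of $C_t$ in $N^p$ is legitimate, since $N^p$ appears in the list of solid Banach lattices in Section 10, but it only yields $\|C_t\|_{N^p\to N^p}\le\|C_1\|_{N^p\to N^p}$, not \eqref{7.2}, as you correctly note.
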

 
 \begin{proof} According to Theorem 6.2 of \cite{CR4}, for every $1<p<\infty$ the operator $C_t\in \cL(N^p)$ satisfies \eqref{7.2}, is compact and has spectra given by \eqref{7.3}.
 Since $\ell^1\subseteq N^p$ and $x_t^{[0]}\in\ell^1$, it is clear that $x_t^{[0]}\in N^p$. Hence, Theorem \ref{T-general} yields that $C_t\in\cL(N^p)$ is power bounded and uniformly mean ergodic. Moreover, Lemma \ref{L-nuovo} implies that $C_t$ is not supercyclic in $N^p$.\end{proof}
 
 \section{The Cesàro spaces $ces_p$ for $p\in\{0\}\cup (1,\infty]$}
 
 For each $1<p<\infty$, the Cesàro space
 \[
 ces_p:=\left\{x\in\omega\, :\, \|x\|_{ces_p}:=\left(\sum_{n=0}^\infty\left(\frac{1}{n+1}\sum_{k=0}^n|x_k|\right)^p\right)^{1/p}<\infty\right\}.
 \]
 Equipped with the norm $\|\cdot\|_{ces_p}$ it is known that $ces_p$ is a reflexive Banach sequence space, \cite{Be}. Lemma 4.3 of \cite{CR4} shows that $ces_p\subseteq \omega$ continuously and so $ces_p$ is a BK-space. According to \cite[Proposition 2.1]{CR3} we have $\cE\subseteq ces_p$ is an unconditional basis and hence, $ces_p$ is separable. For further properties of $ces_p$ see \cite{G-E}.
 
 For $p=\infty$, the space
 \[
 ces_\infty:=\left\{x\in\omega\, :\, \left(\frac{1}{n+1}\sum_{k=0}^n|x_k|\right)_{n\in\N_0}\in\ell^\infty\right\}.
 \] 
 Equipped with the norm $\|x\|_{ces_\infty}:=\sup_{n\in\N_0}\frac{1}{n+1}\sum_{k=0}^n|x_k|$, for $x\in ces_\infty$, the space $ces_\infty$ is a Banach sequence space containing $\cE$. Since $ces_\infty$ is non-separable, \cite[Proposition 4.9(i)]{CR4}, it follows that $\cE$ is not a basis for $ces_\infty$.
 
 For $p=0$, define the sequence space
 \[
 ces_0:=\left\{x\in\omega\, :\, \left(\frac{1}{n+1}\sum_{k=0}^n|x_k|\right)_{n\in\N_0}\in c_0\right\}.
 \] 
 Since $ces_0$ is a proper, closed subspace of $ces_\infty$, \cite[Remark 6.3]{CR3}, it is a Banach sequence space for the norm from $ces_\infty$, which we denote by $\|\cdot\|_{ces_0}$. Lemma 4.3 of \cite{CR4} shows that $ces_0\subseteq ces_\infty\subseteq \omega$ with continuous inclusions and so both $ces_0$, $ces_\infty$ are BK-spaces. Moreover, $\cE\subseteq ces_0$ and $\cE$ is an unconditional basis, \cite[Section 6]{CR3}. Hence, $ces_0$ is separable. The dual space $ces_0'=d_1$ with equal norms, \cite[Lemma 6.2]{CR3}; see the discussion after Proposition \ref{P-Nuova-2} for the definition of $d_1$. Furthermore, with equality of norms, the bidual space $ces_0''=d_1'=ces_\infty$, \cite[Remark 6.3]{CR3}.

 \begin{prop}\label{P.8.1} {\rm (i)} Let  $1<p<\infty$. The operator $C_1\in \cL(ces_p)$ has norm $\|C_1\|_{ces_p\to ces_p}=p':=p/(p-1)$ and  spectrum
 	\begin{equation}\label{8.1}
 	\sigma(C_1;ces_p)=\left\{z\in \C\, : \, \left|z-\frac{p'}{2}\right|\leq \frac{p'}{2}\right\}.
 	\end{equation}
 	Moreover, $C_1$ is not compact,  not power bounded and not mean ergodic in $ces_p$ and fails to be supercyclic.
 	
 	{\rm (ii)} For $p=0$ the operator  $C_1\in \cL(ces_0)$ has norm $\|C_1\|_{ces_0\to ces_0}=1$ and so it is power bounded.   Its spectra are given by 
 	\begin{equation}\label{8.2}
 	\sigma_{pt}(C_1;ces_0)=\emptyset\ \mbox{ and }\ \sigma(C_1;ces_0)=\left\{z\in \C\, : \, \left|z-\frac{1}{2}\right|\leq \frac{1}{2}\right\}.
 	\end{equation}
 	Furthermore, $C_1$ is not compact, not mean ergodic and not supercyclic  in $ces_0$.
 	
 	{\rm (iii)} For $p=\infty$ the operator $C_1\in\cL(ces_\infty)$ satisfies $\|C_1\|_{ces_\infty\to ces_\infty}= 1$  and so it is power bounded. Moreover,  $C_1$ is not compact and not mean ergodic in the non-separable space  $ces_\infty$ and fails to be supercyclic.
 	\end{prop}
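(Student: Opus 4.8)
The plan is to handle each part by importing the exact operator norm and the spectrum of $C_1$ from \cite{CR3} (the only genuinely analytic input) and then feeding this into the general results of Sections 2--3, exactly as in the proofs of Propositions \ref{P.4.1} and \ref{P.5.1}. For part (i): by \cite{CR3}, $C_1\in\cL(ces_p)$ with $\|C_1\|_{ces_p\to ces_p}=p'$ and $\sigma(C_1;ces_p)$ equal to the closed disk in \eqref{8.1}. This disk has nonempty interior, so it cannot be the spectrum of a compact operator on an infinite dimensional Banach space; hence $C_1$ is not compact. Since $p'>1$, the disk is not contained in $\overline{B(0,1)}$, so $C_1$ is not power bounded by Proposition \ref{P.inclusioneD}; and if $C_1$ were mean ergodic then $C_1^n/n\to 0$ in $\cL_s(ces_p)$ by the remarks after \eqref{eq.Tende0}, so Proposition \ref{P.inclusioneD} would force $\sigma(C_1;ces_p)\subseteq\overline{B(0,1)}$, which is false. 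Finally $c_{00}\subseteq ces_p$, so Lemma \ref{L-nuovo} shows $C_1$ is not supercyclic.

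For part (ii): by \cite{CR3}, $\|C_1\|_{ces_0\to ces_0}=1$ (so $C_1$ is power bounded) and the spectra are as in \eqref{8.2}; non-compactness and non-supercyclicity then follow as in (i) (the spectrum is again a disk with nonempty interior, and $c_{00}\subseteq ces_0$). The one point requiring an argument is non-mean-ergodicity, which I would obtain from Proposition \ref{P.Non-ME}. Indeed, $\sigma_{pt}(C_1;ces_0)=\emptyset$ by \eqref{8.2}, so $1\notin\sigma_{pt}(C_1;ces_0)$; on the other hand $(C_1^\omega)'f_0=f_0$ in $\omega'_\beta$ (because $\langle C_1^\omega x,f_0\rangle=(C_1^\omega x)_0=x_0=\langle x,f_0\rangle$ for every $x\in\omega$), while $f_0|_{ces_0}=P_0\neq 0$ since $e_0\in ces_0$. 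Applying Proposition \ref{P.SimpleFact1}(ii) with $X=\omega$, $T=C_1^\omega$, $Y=ces_0$, $\alpha=1$ and $u=f_0$ gives $1\in\sigma_{pt}((C_1)';ces_0')$ (equivalently, under $ces_0'=d_1$ the vector $e_0\in d_1$ is a fixed vector of $C_1'$). Proposition \ref{P.Non-ME} now yields that $C_1$ is not mean ergodic in $ces_0$.

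For part (iii): by \cite{CR3}, $\|C_1\|_{ces_\infty\to ces_\infty}=1$, so $C_1$ is power bounded; and $C_1$ is not supercyclic because $ces_\infty$ is non-separable (see the discussion preceding Proposition \ref{P.SimpleFact2}). For non-compactness and non-mean-ergodicity I would pass to $ces_0$, which is a closed, $C_1$-invariant subspace of $ces_\infty$: were $C_1\in\cL(ces_\infty)$ compact, its restriction $C_1\in\cL(ces_0)$ would be compact by Proposition \ref{P.SimpleFact2}(i), contradicting (ii); and were $C_1\in\cL(ces_\infty)$ mean ergodic, its restriction $C_1\in\cL(ces_0)$ would be mean ergodic by Proposition \ref{P.SimpleFact2}(iii), again contradicting (ii). I expect (iii) to be the only real obstacle, since the Sine-type argument used in (ii) is unavailable here: $\mathbbm{1}\in ces_\infty$ and $C_1\mathbbm{1}=\mathbbm{1}$, so $1\in\sigma_{pt}(C_1;ces_\infty)$, and one is forced to descend to the closed subspace $ces_0$ and use the stability of compactness and mean ergodicity under restriction. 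Beyond that, all the analytic weight is carried by the cited norm and spectrum computations of \cite{CR3}.
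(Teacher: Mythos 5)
Your proof is correct and follows essentially the same route as the paper: import the norms and spectra from the Curbera--Ricker papers, deduce non-compactness from the uncountable spectrum, handle (i) via Proposition \ref{P.inclusioneD} and the remark after \eqref{eq.Tende0}, (ii) via Proposition \ref{P.Non-ME}, and (iii) by descending to the closed $C_1$-invariant subspace $ces_0$ and invoking Proposition \ref{P.SimpleFact2}. The only (harmless) deviations are that in (i) the paper simply cites \cite{ABR6} for non-power-boundedness, non-mean-ergodicity and non-supercyclicity, and in (ii) the paper quotes the full computation of $\sigma_{pt}(C_1';ces_0')$ from \cite{CR3}, whereas your direct verification that $(C_1^\omega)'f_0=f_0$ with $f_0|_{ces_0}\neq 0$, fed into Proposition \ref{P.SimpleFact1}(ii), is a clean self-contained substitute.
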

 
 \begin{proof} (i) It is known (cf. \cite[p.12]{CR4} and the references given there)
 that $C_1\in \cL(ces_p)$ satisfies $\|C_1\|_{ces_p\to ces_p}=p'$ with spectrum given by \eqref{8.1}.
Hence, $C_1$ is not compact in $ces_p$. Moreover, $C_1\in \cL(ces_p)$ is not mean ergodic, not power bounded and not supercyclic, \cite[Proposition 3.7(ii)]{ABR6}.

(ii) Theorem 6.4  (and its proof) in \cite{CR3} show that $C_1\in \cL(ces_0)$ satisfies $\|C_1\|_{ces_0\to ces_0}=1$ and has spectra as given in \eqref{8.2}.
In particular, $C_1$ is power bounded but, not compact in $ces_0$. It is also shown in the proof of Theorem 6.4 in \cite{CR3} that
\[
\sigma_{pt}(C_1';ces_0')=\{1\}\cup\left\{z\in \C\, : \, \left|z-\frac{1}{2}\right|<\frac{1}{2}\right\}.
\]
In particular, $1\in\sigma_{pt}(C_1';ces_0')$. It follows from Proposition \ref{P.Non-ME} that $C_1$ is not mean ergodic in $ces_0$. It was noted above that $ces_0\subseteq \omega$ continuously and so Lemma \ref{L-nuovo} implies that $C_1\in \cL(ces_0)$ is not supercyclic.

(iii) Lemma 4.1(i) and Lemma 4.8 of \cite{CR4} imply that $\|C_1\|_{ces_\infty\to ces_\infty}\leq 1$. The vector $\mathbbm{1}:=(1^n)_{n\in\N_0}$ belongs to $\ell^\infty\subseteq ces_\infty$, satisfies $\|\mathbbm{1}\|_{ces_\infty}=1$ and $C_1\mathbbm{1}=\mathbbm{1}$, which implies that actually $\|C_1\|_{ces_\infty\to ces_\infty}= 1$ and that $1\in \sigma_{pt}(C_1;ces_\infty)$. Since $C_1\in \cL(ces_0)$ is not compact and $ces_0$ is a closed, $C_1$-invariant subspace of $ces_\infty$, also $C_1\in \cL(ces_\infty)$ fails to be compact; see Proposition \ref{P.SimpleFact2}(i).  Since $C_1\in \cL(ces_0)$ is not mean ergodic, neither is $C_1\in\cL(ces_\infty)$; see Proposition \ref{P.SimpleFact2}(iii). Moreover, $ces_\infty\subseteq \omega$ continuously and so Lemma \ref{L-nuovo} yields that $C_1\in\cL(ces_\infty)$ is not supercyclic.\end{proof}

The situation for $t\in [0,1)$ is rather different than for $t=1$.

\begin{prop}\label{P.8.2} Let $0\leq t<1$.
	\begin{itemize}
		\item[\rm (i)] For each $1<p<\infty$ the operator $C_t\in \cL(ces_p)$  satisfies 
		\begin{equation}\label{8.3}
		\|C_t\|_{ces_p\to ces_p}\leq \min\{(1-t)^{-1}, p/(p-1)\},
		\end{equation}
	is compact and has spectra
		\begin{equation}\label{8.4}
		\sigma_{pt}(C_t;ces_p)=\Lambda\ \mbox{ and }\ \sigma(C_t;ces_p)=\Lambda_0.
		\end{equation}
	Moreover, $C_t$ is  power bounded and  uniformly mean ergodic in $ces_p$ but, fails to be supercyclic.
	\item[\rm (ii)] For $p=0$  the operator $C_t\in \cL(ces_0)$ satisfies 
	\begin{equation}\label{8.5}
	\|C_t\|_{ces_0\to ces_0}=1
	\end{equation}
and so it is power bounded. Moreover, $C_t$  is a compact operator in $ces_0$ with spectra
	\begin{equation}\label{8.6}
	\sigma_{pt}(C_t;ces_0)=\Lambda\ \mbox{ and }\ \sigma(C_t;ces_0)=\Lambda_0. 
	\end{equation}
	In addition, $C_t$ is uniformly mean ergodic  in $ces_0$ but,  not supercyclic.  
	\item[\rm (iii)] For $p=\infty$  the operator $C_t\in \cL(ces_\infty)$ has norm
	\begin{equation}\label{8.7}
	\|C_t\|_{ces_\infty\to ces_\infty}=1
	\end{equation}
	 and so it is power bounded. Furthermore, $C_t$ is a compact operator in $ces_\infty$  with spectra
	\begin{equation}\label{8.8}
	\sigma_{pt}(C_t;ces_\infty)=\Lambda\ \mbox{ and }\ \sigma(C_t;ces_\infty)=\Lambda_0.
	\end{equation}
	In addition, $C_t$ is uniformly mean ergodic in $ces_\infty$ but,  fails to be supercyclic.
	\end{itemize}
	\end{prop}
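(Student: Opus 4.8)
The plan is to treat the three cases $1<p<\infty$, $p=0$, $p=\infty$ in parallel, reducing each to statements already proved above: Proposition \ref{P-Nuova-1} for the existence of $C_t$ and the norm bounds, Lemma \ref{L-2Sez2} for the spectra, Theorem \ref{T-general} for power boundedness and uniform mean ergodicity, and Lemma \ref{L-nuovo} for non-supercyclicity. The only genuinely analytic input imported from elsewhere is the compactness of $C_t$ on these spaces, which is contained in the Banach-lattice results of \cite{CR4} (and, for $1<p<\infty$, also in \cite[Theorem 6.6]{ABR-9}). I would begin with the structural observation that each of $ces_p$ ($1<p<\infty$), $ces_0$ and $ces_\infty$ is a solid Banach sequence space whose norm is an absolute Riesz norm and which contains $c_{00}$, hence is a BK-space; moreover $\ell^1$ is continuously included in each of them, since $\frac{1}{n+1}\sum_{k=0}^n|x_k|\le\|x\|_1$ for $x\in\ell^1$. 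Recalling that the eigenvectors $x_t^{[m]}$ of \eqref{eq.EigenvalueC} all lie in $d_1\subseteq\ell^1$ by \cite[Lemma 3.6(ii)]{CR4}, it follows that $x_t^{[m]}\in X$ for every $m\in\N_0$ whenever $X\in\{ces_p,ces_0,ces_\infty\}$; this produces the full point spectrum.

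The second step fixes the norms and the existence of $C_t$. For $1<p<\infty$, Proposition \ref{P.8.1}(i) gives $C_1\in\cL(ces_p)$ with $\|C_1\|_{ces_p\to ces_p}=p'=p/(p-1)$, so, $ces_p$ being solid with absolute Riesz norm, Proposition \ref{P-Nuova-1} yields $C_t\in\cL(ces_p)$ with $\|C_t\|_{ces_p\to ces_p}\le p/(p-1)$; together with the known bound $\le(1-t)^{-1}$ (see \cite{CR4}) this gives \eqref{8.3}. For $p\in\{0,\infty\}$, Proposition \ref{P.8.1}(ii),(iii) gives $\|C_1\|=1$ on both $ces_0$ and $ces_\infty$, so Proposition \ref{P-Nuova-1} yields $C_t\in\cL(ces_0)$ and $C_t\in\cL(ces_\infty)$, each of norm $\le 1$. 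The reverse inequality comes from testing against $e_0$: in each of $ces_0$ and $ces_\infty$ one has $\|e_0\|=1$, and since $C_te_0=(t^n/(n+1))_{n\in\N_0}$ one computes $\|C_te_0\|=\sup_n\frac{1}{n+1}\sum_{k=0}^n\frac{t^k}{k+1}=1$, the supremum being attained at $n=0$ because $\sum_{k=0}^n\frac{1}{k+1}<n+1$ for $n\ge 1$. Hence $\|C_t\|=1$ on $ces_0$ and on $ces_\infty$, which is \eqref{8.5} and \eqref{8.7}.

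For the spectra, $C_t\in\cL(X)$ is compact for $X\in\{ces_p,ces_0,ces_\infty\}$ by \cite{CR4}. Since every $x_t^{[m]}$ lies in $X$, Lemma \ref{L-2Sez2}(i) gives $\sigma_{pt}(C_t;X)=\Lambda$, and as $\Lambda$ is infinite, Lemma \ref{L-2Sez2}(ii)(b) forces $\sigma(C_t;X)=\Lambda_0$; this is \eqref{8.4}, \eqref{8.6} and \eqref{8.8} (these spectra are also recorded directly in \cite{CR4}).

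Finally, the dynamical conclusions are read off. In each case $x_t^{[0]}=(t^n)_{n\in\N_0}\in\ell^1\subseteq X$ and $C_t\in\cL(X)$ is compact, so hypotheses (i) and (ii) of Theorem \ref{T-general} hold; hence $C_t$ is power bounded, uniformly mean ergodic, and $\|(C_t)_{[n]}-P\|_{X\to X}\to 0$, where $P$ is the projection onto ${\rm span}\{x_t^{[0]}\}$ along $(I-C_t)(X)$. Since $c_{00}\subseteq X$ and the inclusion $X\subseteq\omega$ is continuous, Lemma \ref{L-nuovo} shows that $C_t$ is not supercyclic (for $X=ces_\infty$ this also follows from non-separability, via the remark preceding Proposition \ref{P.SimpleFact2}). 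I expect the real obstacle to be the compactness of $C_t$ on these particular spaces — especially on the non-reflexive, non-separable space $ces_\infty$ — which is exactly the point at which one must rely on \cite{CR4}; once compactness is in hand, everything else is a direct application of the results already established.
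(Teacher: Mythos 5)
Your argument is correct, and it reaches the same conclusions via a somewhat more self-contained route than the paper. The paper's proof is essentially a sequence of citations: for $1<p<\infty$ it takes the norm bound \eqref{8.3}, compactness and the spectra \eqref{8.4} directly from \cite[Theorem 4.6, Remark 4.7(i)]{CR4} and then quotes \cite[Theorem 6.6]{ABR-9} for power boundedness, uniform mean ergodicity and non-supercyclicity; for $p=0$ and $p=\infty$ it takes the norms \eqref{8.5}, \eqref{8.7}, compactness and the spectra from \cite[Theorem 4.6, Remark 4.7(i), Theorem 4.10]{CR4}, and only then applies Theorem \ref{T-general} (via $x_t^{[0]}\in\ell^1\subseteq X$) and Lemma \ref{L-nuovo}. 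You instead derive the existence of $C_t$ and the bounds $\|C_t\|\le p/(p-1)$ (resp. $\le 1$) internally from Proposition \ref{P-Nuova-1} applied to the $C_1$ results of Proposition \ref{P.8.1}, obtain the reverse inequality $\|C_t\|\ge 1$ on $ces_0$ and $ces_\infty$ by the explicit test vector $e_0$ (the computation $\sup_n\frac{1}{n+1}\sum_{k=0}^n\frac{t^k}{k+1}=1$, attained at $n=0$, is correct), recover the spectra from Lemma \ref{L-2Sez2} together with $x_t^{[m]}\in d_1\subseteq\ell^1\subseteq X$, and treat all three cases uniformly with Theorem \ref{T-general} and Lemma \ref{L-nuovo}. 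What your approach buys is fewer external citations and a uniform template for all three values of $p$; what the paper's buys is brevity, since \cite{CR4} already records the norms and spectra verbatim. You correctly identify that the one ingredient neither route can produce internally is the compactness of $C_t$ on $ces_p$, $ces_0$ and $ces_\infty$, for which both you and the paper rely on \cite{CR4}.
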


\begin{proof} (i) For the facts that $C_t\in \cL(ces_p)$ is compact, satisfies \eqref{8.3} and has spectra given by \eqref{8.4} see
\cite[Theorem 4.6 \& Remark 4.7(i)]{CR4}. According to \cite[Theorem 6.6]{ABR-9} the operator $C_t\in \cL(ces_p)$ is power bounded, uniformly mean ergodic, but not supercyclic.

(ii) That the operator $C_t\in \cL(ces_0)$ satisfies \eqref{8.5}, see \cite[Remark 4.7(i)]{CR4}.  According to  \cite[Theorerm 4.6]{CR4}, $C_t$ is a compact operator with spectra equal to \eqref{8.6}.
 Since $c_0\subseteq ces_0$ and $x_t^{[0]}\in\ell^1\subseteq c_0$,  Theorem \ref{T-general} implies that $C_t\in \cL(ces_0)$ is uniformly mean ergodic. Lemma \ref{L-nuovo} shows that $C_t$ is not supercyclic.  
 
(iii)  According to \cite[Theorem 4.10]{CR4} the operator $C_t\in \cL(ces_\infty)$ satisfies \eqref{8.7} and is a compact operator with spectra equal to \eqref{8.8}.
 Since $\ell^1\subseteq \ell^\infty\subseteq ces_\infty$ and $x_t^{[0]}\in\ell^1\subseteq ces_\infty$, it follows from Theorem \ref{T-general} that $C_t\in \cL(ces_\infty)$ is uniformly mean ergodic. The non-separability of $ces_\infty$ yields that $C_t$ is not supercyclic in $ces_\infty$; see also Lemma \ref{L-nuovo}.\end{proof}

\section{The dual Cesàro spaces $d_p$ for $1\leq p<\infty$}

The BK-space $d_1$ has already been  defined and shown to satisfy $d_1\subseteq \ell^1\subseteq \omega$ with continuous inclusions. In the previous section it was noted that $d_1=ces_0'$ with equal norms. For each $p\in (1,\infty)$ define
\[
d_p:=\left\{x\in\ell^\infty\, :\, \hat{x}:=\left(\sup_{k\geq n}|x_k|\right)_{n\in\N_0}\in\ell^p\right\}.
\]
Then $d_p$ is a Banach sequence space for the norm
\[
\|x\|_{d_p}:=\|\hat{x}\|_p,\quad x\in d_p,
\]
which is isomorphic to the dual Banach space $(ces_{p'})'$, \cite[Corollary 12.17]{Be}. In particular, $d_p$ is reflexive for each $p\in (1,\infty)$. Since $|x|\leq |\hat{x}|$, it is clear that $\|x\|_p\leq \|\hat{x}\|_p= \|x\|_{d_p}$, for $x\in d_p$, that is, $d_p\subseteq \ell^p\subseteq \omega$ with continuous inclusions. So, $d_p$ is a BK-space. Since $\ell^1\subseteq \ell^p$ for all $1<p<\infty$, it follows that $d_1\subseteq d_p$ continuously. For each $1\leq p<\infty$ it is known that $\cE\subseteq d_p$ is an unconditional basis for $d_p$; see \cite[Proposition 2.1]{BR1} and \cite[Section 6]{CR3}.

For  $t=1$ the operator $C_1$ does not exist in $d_1$, \cite[p.17]{CR4}.

 So, we only consider $p\in (1,\infty)$. The operator $C_1\in \cL(d_p)$ satisfies $\|C_1\|_{d_p\to d_p}=p'$ with spectra
\[
\sigma_{pt}(C_1; d_p)=\emptyset\ \mbox{ and }\ \sigma(C_1; d_p)=\left\{z\in\C\, :\, \left|z-\frac{p'}{2}\right|\leq \frac{p'}{2} \right\},
\]
\cite[Proposition 3.2 \& Corollary 3.5]{BR1}. In particular, $C_1\in\cL(d_p)$ is not compact. According to \cite[Propositions 3.10 \& 3.11]{BR1} the operator $C_1$ is not mean ergodic and not supercyclic in $d_p$. Since $d_p$ is  reflexive  and power bounded operators in reflexive spaces are necessarily mean ergodic, \cite{Lor}, it follows that $C_1$ is not power bounded in $d_p$.

As is  typically the case, the situation for $t\in [0,1)$ is somewhat  different.

\begin{prop}\label{P.9.1} Let $0\leq t<1$.
	\begin{itemize}
		\item[\rm (i)] Let $p=1$. Then  $C_t\in \cL(d_1)$ and its operator norm satisfies $\|C_0\|_{d_1\to d_1}=1$ and \begin{equation}\label{9.1}
			\|C_t\|_{d_1\to d_1}\leq 1/(1-t)^2,\quad t\in (0,1),
			\end{equation}. 
		\item[\rm (ii)] For $1<p<\infty$ the operator norm of $C_t\in \cL(d_p)$ satisfies  $\|C_0\|_{d_p\to d_p}=1$ and
		\begin{equation}\label{9.2}
		\|C_t\|_{d_p\to d_p}\leq \min\{\|\xi\|_p/(1-t),\ (1-t)^{-1-(1/p)} \},\quad t\in (0,1),
		\end{equation}
		where $\xi:=(1/(n+1))_{n\in\N_0}$.
		\item[\rm (iii)] Let $p\in [1,\infty)$. the operator $C_t$ is compact in $d_p$ with spectra
		\begin{equation}\label{9.3}
		\sigma_{pt}(C_t; d_p)=\Lambda\ \mbox{ and }\ \sigma(C_t;d_p)=\Lambda_0.
		\end{equation}
		Moreover,  $C_t$ is power bounded and uniformly mean ergodic in $d_p$ but, not supercyclic.
	\end{itemize}
	\end{prop}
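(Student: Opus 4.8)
The plan is to establish the norm bounds in (i)--(ii) by a direct computation in each $d_p$, and then to read off all of (iii) from the machinery of Sections~2--3, the one substantial analytic input being the compactness of $C_t$ on $d_p$.

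For (i) and (ii) the case $t=0$ is trivial: $C_0$ is the diagonal operator $(C_0x)_k=x_k/(k+1)$, so, $\widehat{\,\cdot\,}$ being order preserving and $k\mapsto 1/(k+1)$ nonincreasing, $\widehat{C_0x}_n=\sup_{k\ge n}|x_k|/(k+1)\le\hat x_n/(n+1)\le\hat x_n$, whence $\|C_0\|_{d_p\to d_p}\le1$; testing on $e_0$ ($C_0e_0=e_0$, $\|e_0\|_{d_p}=1$) gives equality. For $t\in(0,1)$ I would use that $C_t^\omega$ has a nonnegative matrix, so $|C_tx|\le C_t|x|\le C_t\hat x$ coordinatewise, together with the elementary fact that $C_t$ maps nonincreasing nonnegative sequences to nonincreasing sequences (which, via $(m+1)(C_ta)_m=a_m+tm(C_ta)_{m-1}$, reduces to $(m(1-t)+1)t^m\le1$, true since $\ln t\le t-1$); hence $\widehat{C_tx}\le C_t\hat x$ and $\|C_tx\|_{d_p}\le\|C_t\hat x\|_{\ell^p}$. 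For $p=1$ this and Proposition~\ref{P.4.2}(i) give $\|C_t\|_{d_1\to d_1}\le t^{-1}\log(1/(1-t))\le(1-t)^{-2}$, while for $1<p<\infty$ elementary geometric-series and H\"older estimates of $\sum_{i\le m}t^{m-i}|x_i|$, together with $|x_i|\le\hat x_i\le\|x\|_\infty\le\|x\|_{d_p}$, produce the two factors $\|\xi\|_p/(1-t)$ and $(1-t)^{-1-1/p}$. In particular $C_t\in\cL(d_p)$ for every $0\le t<1$ and $1\le p<\infty$.

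For (iii) the key point is that $C_t\in\cL(d_p)$ is compact. Since $\cE$ is an unconditional basis of $d_p$, the partial-sum projections $S_N\colon x\mapsto(x_0,\dots,x_N,0,\dots)$ are uniformly bounded and converge strongly to $I$, so a bounded set $B\subseteq d_p$ is relatively compact exactly when $\sup_{x\in B}\|x-S_Nx\|_{d_p}\to0$; writing $\|x-S_Nx\|_{d_p}^p=(N+1)\hat x_{N+1}^p+\sum_{n>N}\hat x_n^p$, one must make both terms small uniformly over $B=C_t(\cU_{d_p})$. Using $\widehat{C_tx}\le C_t\hat x$ and the crude bound $(C_t\hat x)_m\le\hat x_0/((1-t)(m+1))\le 1/((1-t)(m+1))$, this is routine for $p>1$. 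For $p=1$ that bound is too weak (it only gives $(N+1)(C_t\hat x)_{N+1}\lesssim(1-t)^{-1}$, which does not vanish); here one instead uses that a nonincreasing $\hat x$ with $\|\hat x\|_1\le1$ satisfies $\hat x_n\le1/(n+1)$, so that $(C_t\hat x)_{N+1}\le\frac1{N+2}\sum_{i=0}^{N+1}\frac{t^{N+1-i}}{i+1}$, and the extra factor $1/(i+1)$ now forces $(N+1)(C_t\hat x)_{N+1}\to0$ and $\sum_{n>N}(C_t\hat x)_n\to0$ uniformly; hence $C_t$ is compact on $d_1$ as well. (Alternatively, one may invoke \cite{CR4}, each $d_p$ being a Banach lattice contained in $\omega$.)

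With compactness in hand the rest is routine. The eigenvectors $x_t^{[m]}$ all lie in $d_1\subseteq d_p$ by \cite[Lemma~3.6(ii)]{CR4} (see \eqref{eq.EigenvalueC}), so Lemma~\ref{L-2Sez2}(i) gives $\sigma_{pt}(C_t;d_p)=\Lambda$ and, $d_p$ being infinite dimensional, Lemma~\ref{L-2Sez2}(ii)(b) gives $\sigma(C_t;d_p)=\Lambda_0$. Since $x_t^{[0]}\in d_p$, Theorem~\ref{T-general} shows that $C_t$ is power bounded and uniformly mean ergodic with $\|(C_t)_{[n]}-P\|_{d_p\to d_p}\to0$, $P$ the projection onto $\mathrm{span}\{x_t^{[0]}\}$ along $(I-C_t)(d_p)$; this also recovers \cite[Theorem~6.6]{ABR-9}. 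Finally $c_{00}\subseteq d_p$, so $C_t$ is not supercyclic by Lemma~\ref{L-nuovo}. The genuinely delicate step is the compactness of $C_t$ in $d_1$: the operator-norm data alone is insufficient, and one must exploit the order structure (the bound $\hat x_n\le(n+1)^{-1}$ forced on the decreasing rearrangement), whereas for $p>1$ compactness, and for the spectrum and the remaining properties for all $p$, everything follows essentially formally from Sections~2--3.
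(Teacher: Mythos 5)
Your argument is correct, but it is genuinely different from the paper's: the paper proves (i), (ii) and the compactness and spectra in (iii) purely by citation to \cite{CR4} (Lemma 3.3, Proposition 4.15, Remark 4.18(ii), Theorem 4.19), and obtains the ergodic properties from \cite[Theorem 6.6]{ABR-9}, whereas you give a self-contained proof. Your structural lemma --- that $C_t$ preserves nonnegative nonincreasing sequences (via the recursion $(m+1)(C_ta)_m=a_m+tm(C_ta)_{m-1}$ and the inequality $(m(1-t)+1)t^m\le 1$, which I checked), hence $\widehat{C_tx}\le C_t\hat x$ coordinatewise --- is the right workhorse: it reduces the $d_p$-norm of $C_tx$ to the $\ell^p$-norm of $C_t\hat x$ and makes the bounds \eqref{9.1}--\eqref{9.2} and the compactness accessible by hand. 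Your compactness proof via the basis criterion $\sup_{x\in\cU_{d_p}}\|C_tx-S_N C_tx\|_{d_p}\to 0$ is sound, and you correctly isolate the one delicate point: for $p=1$ the crude bound $(C_t\hat x)_m\le ((1-t)(m+1))^{-1}$ is insufficient and one must use the decay $\hat x_n\le (n+1)^{-1}$ forced by $\|\hat x\|_1\le 1$, after which both the term $(N+2)\,\widehat{C_tx}_{N+1}$ and the tail $\sum_{n>N+1}\widehat{C_tx}_n$ do vanish uniformly. The final reduction of (iii) to Lemma \ref{L-2Sez2}, Theorem \ref{T-general} and Lemma \ref{L-nuovo} coincides in substance with the paper's appeal to \cite[Theorem 6.6]{ABR-9}. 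What your route buys is independence from \cite{CR4} and an explicit mechanism (order-theoretic domination by $C_t\hat x$) explaining why $d_p$ behaves like $\ell^p$ here; what it costs is that the H\"older/geometric-series computations behind the two factors in \eqref{9.2} are only sketched and would need to be written out, though they do go through.
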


\begin{proof} (i) That the operator $C_t\in \cL(d_1)$ satisfies \eqref{9.1}, see \cite[Remark 4.18(ii)]{CR4}. Also,  $\|C_0\|_{d_1\to d_1}=1$, \cite[Lemma 3.3]{CR4}. 
	
	(ii) That  the operator $C_t\in \cL(d_p)$ satisfies  $\|C_0\|_{d_p\to d_p}=1$ and \eqref{9.2} see
 \cite[Proposition 4.15 \& Theorem 4.19]{CR4}.

(iii) That the operator $C_t\in \cL(d_p)$ is compact with spectra given by \eqref{9.3} see
\cite[Theorem 4.19]{CR4}. According to \cite[Theorem 6.6]{ABR-9} the operator $C_t\in \cL(d_p)$ is power bounded and uniformly mean ergodic, but not supercyclic.\end{proof}

\section{Solid Banach lattices in $\omega$ }

Let $X\subseteq \omega$ be a Banach sequence space with $c_{00}\subseteq X$ and having an absolute Riesz norm. According to the discussion before Proposition \ref{P-Nuova-2}, $X$ is a BK-space. Throughout this section it is  assumed, for the coordinate-wise order induced from $\omega$, that $(X,\|\cdot\|)$ is a \textit{Banach lattice}; see, for example, \cite{M-N} for the theory of such spaces. If $X\subseteq \omega$ is a Banach lattice which is solid, contains $c_{00}$ and the natural inclusion $X\subseteq \omega$ is continuous, then $X$ is called a \textit{solid Banach lattice}; see \cite[Section 2]{CR4}.

A solid Banach lattice $X\subseteq \omega$ is called \textit{translation invariant} if $S(X)\subseteq X$, where $S\colon \omega\to\omega$ is the right-shift operator given by
\begin{equation}\label{10.1}
	Sx:=(0,x_0, x_1, \ldots),\quad x=(x_n)_{n\in\N_0}\in\omega.
	\end{equation}
The operator $S\in \cL(\omega)$, \cite[p.6]{CR4}. The condition $S(X)\subseteq X$ and a closed graph  argument  imply  that necessarily $S\in \cL(X)$, \cite[p.8]{CR4}. For the following result we refer to \cite[Proposition 3.4 \& Theorem 3.7]{CR4}. First we recall some notation. Let $D_\varphi$ be the diagonal operator given by \eqref{Dia-op}. For  $n\in\N_0$ it is clear from \eqref{eq.sem-omega} that 
\[
r_n(D_\varphi x)\leq \|\varphi\|_\infty r_n(x)=r_n(x),\quad x\in \omega,
\]
and hence, $D_\varphi\in \cL(\omega)$. The restriction of $D_\varphi$ to $X$, again denoted by $D_\varphi$, satisfies $D_\varphi\colon X\to X$ with $\|D_\varphi\|_{X\to X}=1$ and is a compact operator, \cite[Lemma 3.3]{CR4}.

\begin{prop}\label{P-10.1} Let $X\subseteq \omega$ be a translation invariant, solid Banach lattice such that $\cE\subseteq X$ and $\cE$ is  a basis for $X$. Suppose, for some $t\in [0,1)$, that the series
	\begin{equation}\label{eq.10.2}
		R_t:=\sum_{n=0}^\infty t^nS^n
	\end{equation}
is convergent in $\cL_s(X)$, in which case $R_t\in \cL(X)$. Then $C_t=D_\varphi R_t$ and so $C_t\in \cL(X)$ is a compact operator. If, in addition, $d_1\subseteq X$, then
\begin{equation}\label{eq.10-3}
	\sigma_{pt}(C_t; X)=\Lambda\ \mbox{ and }\ \sigma(C_t;X)=\Lambda_0.
	\end{equation}
\end{prop}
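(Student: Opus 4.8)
The plan is to treat the two assertions in turn: first the operator factorization $C_t=D_\varphi R_t$, from which compactness of $C_t$ in $X$ is immediate, and then the two spectral identities, which will follow from compactness together with Lemma~\ref{L-2Sez2}.

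For the factorization I would first check the identity $C_t^\omega=D_\varphi R_t$ in $\cL(\omega)$ by a direct coordinate computation. For $x\in\omega$ one has $(S^nx)_k=x_{k-n}$ when $k\geq n$ and $(S^nx)_k=0$ when $k<n$, so $(R_tx)_k=\sum_{n=0}^k t^n x_{k-n}=\sum_{j=0}^k t^{k-j}x_j$; applying $D_\varphi$, which divides the $k$-th coordinate by $k+1$, reproduces exactly formula~\eqref{Ces-op}. Because the series $\sum_n t^nS^n$ converges in $\cL_s(X)$ to $R_t$ and $X\subseteq\omega$ continuously, the operator $R_t\in\cL(X)$ is the restriction to $X$ of its $\omega$-version; likewise the operator $D_\varphi$ on $X$ is the restriction of $D_\varphi$ on $\omega$, and it was recalled just above (from \cite[Lemma~3.3]{CR4}) that $D_\varphi\colon X\to X$ is compact. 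Restricting the identity in $\cL(\omega)$ down to $X$ (cf.\ Remark~\ref{R-Uguale} and \eqref{eq.tre}) therefore gives $C_t=D_\varphi R_t$ in $\cL(X)$, and since the compact operators form a two-sided ideal in $\cL(X)$, the operator $C_t\in\cL(X)$ is compact.

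For the spectra, Lemma~\ref{L-2Sez2}(i) already yields $\sigma_{pt}(C_t;X)=\{\frac{1}{m+1}:m\in\N_0,\ x_t^{[m]}\in X\}\subseteq\Lambda$, so it remains to show every eigenvector $x_t^{[m]}$ lies in $X$. Here I would invoke the fact quoted around \eqref{eq.EigenvalueC}, from \cite[Lemma~3.6(ii)]{CR4}, that $x_t^{[m]}\in d_1$ for all $m\in\N_0$; the additional hypothesis $d_1\subseteq X$ then forces $x_t^{[m]}\in X$ for every $m$, whence $\sigma_{pt}(C_t;X)=\Lambda$. Since $\Lambda$ is an infinite set and $C_t\in\cL(X)$ is compact, Lemma~\ref{L-2Sez2}(ii)(b) gives $\sigma(C_t;X)=\sigma_{pt}(C_t;X)\cup\{0\}=\Lambda_0$, which completes the proof.

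The substantive inputs---compactness of $D_\varphi$ on $X$, convergence of the Neumann-type series defining $R_t$, and the membership $x_t^{[m]}\in d_1$---are all furnished by results cited earlier, so the argument is mostly bookkeeping; the only place where a little care is needed is Step~1, in verifying that the composition $D_\varphi R_t$ is literally the operator in \eqref{Ces-op} and that the passage from $\omega$ to $X$ (via the compatibility of the various restrictions) is legitimate. I do not expect a genuine obstacle.
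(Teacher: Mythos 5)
Your proof is correct and is essentially the argument the paper intends: the paper does not write out a proof of Proposition \ref{P-10.1} but simply refers to \cite[Proposition 3.4 \& Theorem 3.7]{CR4}, and the factorization $C_t=D_\varphi R_t$ combined with compactness of $D_\varphi$ (from \cite[Lemma 3.3]{CR4}), followed by Lemma \ref{L-2Sez2} and the membership $x_t^{[m]}\in d_1\subseteq X$, is exactly the reasoning behind that citation. Indeed, the paper carries out this very same scheme explicitly in the proofs of Proposition \ref{P.11.2} for $bv_p$ and Theorem \ref{T.12.5} for $h_d$, so your reconstruction matches the paper's approach.
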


An immediate consequence is the following result.

\begin{corollary}\label{C-10-2} Let $X\subseteq \omega$ be a translation invariant, solid Banach lattice satisfying $d_1\subseteq X$ and such that $\cE\subseteq X$ and $\cE$ is a basis for $X$. Suppose, for some $t\in [0,1)$, that  the series in \eqref{eq.10.2} is convergent in $\cL_s(X)$. Then $C_t\in \cL(X)$ is power bounded, uniformly mean ergodic, but not supercyclic.
	\end{corollary}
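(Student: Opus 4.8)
The plan is to read the statement directly off Proposition~\ref{P-10.1} and Theorem~\ref{T-general}, with Lemma~\ref{L-nuovo} disposing of supercyclicity; the corollary is ``immediate'' precisely because these results already contain all the substance. First I would invoke Proposition~\ref{P-10.1}: the standing hypotheses on $X$ (a translation invariant, solid Banach lattice with $\cE\subseteq X$ a basis) together with the assumed convergence of $R_t=\sum_{n=0}^\infty t^nS^n$ in $\cL_s(X)$ give $C_t=D_\varphi R_t\in\cL(X)$ with $C_t$ compact, and, since in addition $d_1\subseteq X$, also $\sigma_{pt}(C_t;X)=\Lambda$ and $\sigma(C_t;X)=\Lambda_0$. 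This is exactly condition (ii) of Theorem~\ref{T-general}; moreover it provides, through Lemma~\ref{L-2Sez2}(ii), the inclusion $\sigma(C_t;X)\setminus\{1\}\subseteq\overline{B(0,1/2)}$ and the fact that $1\in\sigma(C_t;X)$, both of which are used inside the proof of that theorem.

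Next I would check condition (i) of Theorem~\ref{T-general}, namely $x_t^{[0]}=(t^n)_{n\in\N_0}\in X$. Since $0\le t<1$, the sequence $x:=x_t^{[0]}$ is non-negative and non-increasing, so in the notation of the definition of $d_1$ one has $\hat x=(\sup_{k\ge n}|x_k|)_{n\in\N_0}=(t^n)_{n\in\N_0}$ and $\|\hat x\|_1=\sum_{n=0}^\infty t^n=1/(1-t)<\infty$; hence $x_t^{[0]}\in d_1\subseteq X$. (For $t=0$ one may instead simply note $x_0^{[0]}=e_0\in c_{00}\subseteq X$.)

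With (i) and (ii) verified, Theorem~\ref{T-general} applied to $C_t\in\cL(X)$ yields that $C_t$ is power bounded and uniformly mean ergodic, with $\|(C_t)_{[n]}-P\|_{X\to X}\to 0$ as $n\to\infty$, where $P$ is the projection onto ${\rm span}\{x_t^{[0]}\}$ along $(I-C_t)(X)$. Finally, a solid Banach lattice contains $c_{00}$ by definition and, being a BK-space, satisfies $X\subseteq\omega$ continuously; hence Lemma~\ref{L-nuovo} (equivalently, the ``$X$ contains $c_{00}$'' alternative in Theorem~\ref{T-2}(ii)) shows that $C_t$ is not supercyclic in $X$, which completes the argument.

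There is essentially no obstacle here: the only genuine verification is $x_t^{[0]}\in X$, and the hypothesis $d_1\subseteq X$ --- rather than the stronger $\ell^1\subseteq X$, which need not hold for such $X$ --- is exactly what makes it routine, via the elementary observation that $(t^n)_{n\in\N_0}\in d_1$ whenever $t\in[0,1)$. All of the real content has been absorbed into Proposition~\ref{P-10.1} (the factorization $C_t=D_\varphi R_t$, compactness, and the spectrum) and into Theorem~\ref{T-general}.
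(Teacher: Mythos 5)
Your proposal is correct and follows essentially the same route as the paper: Proposition \ref{P-10.1} supplies compactness of $C_t\in\cL(X)$, the membership $x_t^{[0]}\in d_1\subseteq X$ verifies hypothesis (i) of Theorem \ref{T-general}, and Lemma \ref{L-nuovo} rules out supercyclicity. The only difference is that you verify $x_t^{[0]}\in d_1$ by the direct computation $\hat{x}=(t^n)_{n\in\N_0}$, whereas the paper simply cites this fact from the literature; this is a harmless (indeed welcome) addition.
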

	
	\begin{proof}
		Since $x_t^{[0]}\in d_1\subseteq X$, Theorem \ref{T-general} implies that $C_t\in \cL(X)$ is power bounded and  uniformly mean ergodic. Moreover, the continuity of the natural inclusion $X\subseteq \omega$ (as $X$ is a BK-space) implies, via Lemma \ref{L-nuovo},  that $C_t$ is not supercyclic.
	\end{proof}

It can be checked that each space $X\subseteq \omega$ in the list
\[
 c_0;\ \ell^p\ \mbox{for } p\in [1,\infty];\ ces_q\ \mbox{for } q\in \{0\}\cup (1,\infty],\ d_p\ \mbox{for } p\in [1,\infty);\ N^p\ \mbox{for } p\in (1,\infty)
 \]
is a solid Banach lattice. The spaces $\ell^\infty$, $ces_\infty$ contain $\cE$ but, being non-separable, $\cE$ cannot be a basis. The Banach lattice $c$ is not solid and $cs$ is not even a Banach lattice.

There is a large class of spaces $X$ to which Corollary \ref{C-10-2} applies. Namely, let $X$ be a \textit{Banach  function space} (with the Fatou property) over the measure space $(\N_0,2^{\N_0},\mu)$, where $\mu\colon 2^{\N_0}\to [0,\infty]$ is counting measure, \cite[p.4]{BS}, \cite[Section 2]{CR4}. Then $c_{00}\subseteq X\subseteq \omega$ with continuous inclusions, \cite[Theorems I.1.4 \& I.1.6]{BS}. Moreover, $X$ has an absolute Riesz norm, is a Banach lattice in $\omega$ and is solid, that is, $X$ is a solid Banach lattice. Assume, in addition, that $X$ is also  \textit{rearrangement invariant}, \cite[Section 2]{CR4}; see \cite[Section II.4]{BS} for the general theory of such spaces. Then 
\begin{equation}\label{eq.10.4}
	\ell^1\subseteq X\subseteq \ell^\infty,
	\end{equation}
with both natural inclusions being continuous and having operator norm $1$, \cite[Corollary II.6.8]{BS}. In particular, $d_1\subseteq X$ as $d_1\subseteq \ell^1$ and hence, $\cE\subseteq X$. Moreover, $\cE$ is a basis for $X$ if and only if $c_{00}$ is dense in $X$ if and only if $X$ is separable, \cite[Lemma 2.1]{CR4}. It is known that $X$ is translation invariant and the right-shift operator $S\in \cL(X)$ is an isometry, \cite[p.10]{CR4}. Hence, for \textit{every} $t\in [0,1)$, the series \eqref{eq.10.2} is absolutely convergent in $\cL(X)$ for the operator norm. The class of rearrangement invariant spaces is rather extensive. It includes, for example, the Lorentz spaces $\ell^{p,r}$ with $p,r\in [1,\infty)$, the Lorentz-Zygmund spaces   $\ell^{p,r}(\log \ell)^\alpha$, various Orlicz-sequence spaces and others; see \cite[Remark 3.10(ii)]{CR4} and the references given there.  Due to the failure of \eqref{eq.10.4} the spaces $ces_p$, for $p\in\{0\}\cup (1,\infty)$, are not rearrangement invariant, \cite[Remark 4.7(ii)]{CR4} as is the case for the spaces $d_p$, for $p\in [1,\infty)$, \cite[Remark 4.20(i)]{CR4}. The above discussion in combination with Proposition \ref{P-10.1} and Corollary \ref{C-10-2} yields the following result.

\begin{prop}\label{P.10.3} Let $X\subseteq \omega$ be a separable,  rearrangement invariant Banach function space over $(\N_0, 2^{\N_0},\mu)$. Then  $d_1\subseteq X$ with $\cE\subseteq X$ and $\cE$ is a basis for $X$. Moreover, for every $t\in [0,1)$, the operator $C_t\in \cL(X)$ is compact with spectra given by \eqref{eq.10-3} and $C_t$ is power bounded, uniformly mean ergodic but, not supercyclic.
	\end{prop}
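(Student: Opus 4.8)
The plan is to observe that the statement is essentially a repackaging of Proposition \ref{P-10.1} and Corollary \ref{C-10-2}: the work consists in checking that a separable, rearrangement invariant Banach function space over $(\N_0,2^{\N_0},\mu)$ meets all the hypotheses of those two results, after which the conclusion is immediate. Accordingly, the first step is to collect the structural facts. Since $X$ is a Banach function space it has the Fatou property, whence $c_{00}\subseteq X\subseteq\omega$ with continuous inclusions \cite[Theorems I.1.4 \& I.1.6]{BS}, the norm of $X$ is an absolute Riesz norm, and $X$ is solid; thus $X$ is a solid Banach lattice in the sense of \cite[Section 2]{CR4} and, in particular, a BK-space. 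Rearrangement invariance then yields, via \cite[Corollary II.6.8]{BS}, the continuous inclusions $\ell^1\subseteq X\subseteq\ell^\infty$ (of operator norm $1$); since $d_1\subseteq\ell^1$ this gives $d_1\subseteq X$, and hence $\cE\subseteq X$. Finally, by \cite[Lemma 2.1]{CR4}, for a solid Banach lattice $\cE$ is a basis for $X$ precisely when $c_{00}$ is dense in $X$, i.e. precisely when $X$ is separable, which is assumed; so $\cE$ is a basis for $X$. This proves the first assertion.

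Next I would treat the right-shift operator $S$ of \eqref{10.1}. A rearrangement invariant space $X$ is translation invariant and $S$ acts on $X$ as an isometry \cite[p.10]{CR4}, so $\|t^nS^n\|_{X\to X}=t^n$ for each $n\in\N_0$ and every $t\in[0,1)$; consequently the series $R_t:=\sum_{n\geq0}t^nS^n$ of \eqref{eq.10.2} converges absolutely in $\cL(X)$ for the operator norm, in particular in $\cL_s(X)$, and $R_t\in\cL(X)$. With $d_1\subseteq X$, $\cE\subseteq X$ a basis, and this convergence in hand, Proposition \ref{P-10.1} applies for every $t\in[0,1)$: it gives $C_t=D_\varphi R_t$, so that $C_t\in\cL(X)$ is a compact operator, and it gives the spectra \eqref{eq.10-3}, namely $\sigma_{pt}(C_t;X)=\Lambda$ and $\sigma(C_t;X)=\Lambda_0$.

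The last step is then immediate: the hypotheses of Corollary \ref{C-10-2} are all verified, so $C_t\in\cL(X)$ is power bounded, uniformly mean ergodic, and not supercyclic. (Equivalently one may argue directly: $x_t^{[0]}\in d_1\subseteq X$ and $C_t$ is compact, so Theorem \ref{T-general} yields power boundedness and uniform mean ergodicity, while Lemma \ref{L-nuovo} — applicable since $c_{00}\subseteq X\subseteq\omega$ continuously — rules out supercyclicity.) The only point demanding attention, and hence the ``main obstacle'' such as it is, is the bookkeeping of which cited result supplies which structural property of $X$ (Fatou property $\Rightarrow$ $c_{00}\subseteq X\subseteq\omega$ and solidity; rearrangement invariance $\Rightarrow$ $\ell^1\subseteq X\subseteq\ell^\infty$ and $S$ an isometry on $X$; separability $\Rightarrow$ $\cE$ a basis). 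There is no genuine analytic difficulty; the content is entirely in matching the hypotheses of the machinery developed earlier in this section.
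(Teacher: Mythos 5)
Your proposal is correct and follows essentially the same route as the paper, which proves Proposition \ref{P.10.3} precisely by the discussion preceding it (Fatou property and rearrangement invariance giving the solid Banach lattice structure, $\ell^1\subseteq X\subseteq\ell^\infty$, the isometry of $S$ and hence absolute convergence of $R_t$) combined with Proposition \ref{P-10.1} and Corollary \ref{C-10-2}. Nothing is missing.
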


\section{The spaces $bv_0$, $bv$ and $bv_p$ for $ 1<p<\infty$}

We begin by considering the spaces $bv$ and $bv_0$. An element $x\in\omega$ belongs to $bv$ if and only if 
\[
\|x\|_{bv}:=|x_0|+\sum_{k=1}^\infty|x_k-x_{k+1}|<\infty,
\]
equivalently, if and only if $|\lim_{k\to\infty}x_k|+\sum_{k=1}^\infty|x_k-x_{k+1}|<\infty$. Then $bv$ is a Banach sequence space for the norm $\|\cdot\|_{bv}$ and $bv_0=bv\cap c_0$ is a closed, proper subspace of $bv$. Both $bv_0$ and $bv$ are BK-spaces, \cite[pp.109-110]{Wi}. Clearly $\cE\subseteq bv_0\subseteq bv$. Note that $\|\cdot\|_{bv}$  is not  an absolute Riesz norm. Indeed, $x=((-1)^n/(n+1)^2)_{n\in\N_0}$ and $|x|=(1/(n+1)^2)_{n\in\N_0}$ both belong to $bv_0$ but, $\|x\|_{bv}=\pi^2/3>2=\|\,|x|\,\|_{bv}$.

\begin{prop}\label{P.11.1} {\rm (i)} For $t=1$  the operator $C_1\in \cL(bv)$ satisfies $\|C_1\|_{bv\to bv}=1$ and has spectra
	\begin{equation}\label{A}
	\sigma_{pt}(C_1;bv)=\{1\}\ \mbox{ and }\ \sigma(C_1;bv)=\left\{z\in\C\, :\, \left|z-\frac{1}{2}\right|\leq \frac{1}{2}\right\}
	\end{equation}
	The operator  $C_1$  is power bounded in $bv$ but, it is not compact,  not mean ergodic and fails to be  supercyclic.
	
	{\rm (ii)} For $t=1$ the operator $C_1\in \cL(bv_0)$ satisfies $\|C_1\|_{bv_0\to bv_0}=1$ and has spectra
	\begin{equation}\label{B}
	\sigma_{pt}(C_1;bv_0)=\emptyset\ \mbox{ and }\ \sigma(C_1;bv_0)=\left\{z\in\C\, :\, \left|z-\frac{1}{2}\right|\leq \frac{1}{2}\right\}.
	\end{equation}
	 The operator $C_1$ is power bounded in $bv_0$ but, it is not compact, not mean ergodic and fails to supercyclic.
	\end{prop}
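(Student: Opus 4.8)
The plan is to import the operator-theoretic data for the classical Ces\`aro operator in $bv$ and $bv_0$ from the literature and then read off the dynamical conclusions from the abstract results of Sections 2 and 3. First I would record that $C_1\in\cL(bv)$ with $\|C_1\|_{bv\to bv}=1$ and $C_1\in\cL(bv_0)$ with $\|C_1\|_{bv_0\to bv_0}=1$, together with the point spectra and full spectra displayed in \eqref{A} and \eqref{B}; these are known (\cite{O1,O2}). Power boundedness of $C_1$ in both spaces is then immediate from $\|C_1\|=1$. Since each of $\sigma(C_1;bv)$ and $\sigma(C_1;bv_0)$ is a non-degenerate closed disc, hence uncountable, whereas the spectrum of a compact operator on an infinite-dimensional Banach space is at most countable, and $bv$, $bv_0$ are infinite-dimensional (they contain $\cE$), it follows that $C_1$ is not compact in $bv$ nor in $bv_0$.

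Next I would treat mean ergodicity, working first in $bv_0$. By \eqref{B} one has $1\notin\sigma_{pt}(C_1;bv_0)$. On the other hand a direct computation gives $(C_1^\omega)'f_0=f_0$ in $\omega'_\beta$ --- indeed $\langle C_1^\omega x,f_0\rangle=(C_1^\omega x)_0=x_0=\langle x,f_0\rangle$ for every $x\in\omega$ --- and $f_0|_{bv_0}\ne0$ because $e_0\in bv_0$ with $f_0(e_0)=1$. Hence Proposition \ref{P.SimpleFact1}(ii), applied with $X=\omega$, $Y=bv_0$, $T=C_1^\omega$, $\alpha=1$ and $u=f_0$, yields $1\in\sigma_{pt}(C_1';(bv_0)')$. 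Since $C_1$ is power bounded on $bv_0$, Proposition \ref{P.Non-ME} shows that $C_1$ is not mean ergodic in $bv_0$.

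The remaining conclusions follow by restriction and by the general supercyclicity obstruction. As $bv_0$ is a closed, $C_1$-invariant subspace of $bv$, Proposition \ref{P.SimpleFact2}(iii) forces $C_1$ not to be mean ergodic in $bv$ either: were it mean ergodic in $bv$, its restriction to $bv_0$ would be mean ergodic. Finally $c_{00}={\rm span}(\cE)\subseteq bv_0\subseteq bv$ and both $bv_0$ and $bv$ embed continuously in $\omega$, so Lemma \ref{L-nuovo} gives that $C_1$ is supercyclic in neither space. This closes the argument once the data of the first paragraph are in hand.

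The main obstacle lies entirely in that first step: the exact value $\|C_1\|_{bv\to bv}=\|C_1\|_{bv_0\to bv_0}=1$ and, above all, the identification of $\sigma(C_1;bv)$ and $\sigma(C_1;bv_0)$ with the disc $\{z\in\C:|z-\frac{1}{2}|\le\frac{1}{2}\}$ --- which requires ruling out spectral values outside the disc and describing the residual part --- depend on the detailed resolvent analysis of \cite{O1,O2}; reproducing those computations would be the laborious part, while everything afterwards is routine given the machinery already developed. As a consistency check on the point spectra I would note that $\Ker(I-C_1^\omega)={\rm span}\{\mathbbm{1}\}$, whereas for $m\ge1$ the eigenvector of $C_1^\omega$ for the eigenvalue $1/(m+1)$ is the polynomially growing sequence $(\binom{n}{m})_{n\ge m}$ (namely \eqref{eq.EigenvalueC} with $t=1$), which lies in $bv\subseteq\ell^\infty$ only when $m=0$; since $\mathbbm{1}\in bv\setminus bv_0$ this matches $\sigma_{pt}(C_1;bv)=\{1\}$ and $\sigma_{pt}(C_1;bv_0)=\emptyset$.
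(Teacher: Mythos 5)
Your proposal is correct and, for most of the statement, coincides with the paper's proof: the norms and the spectra \eqref{A}, \eqref{B} are imported from \cite{O1,O2} (the paper also cites \cite{SEl-S} for $bv_0$), power boundedness is read off from $\|C_1\|=1$, non-compactness from the spectrum being an uncountable disc, and non-supercyclicity from Lemma \ref{L-nuovo}. Where you genuinely diverge is the mean ergodicity step. The paper simply cites \cite[Propositions 4.4 \& 4.7]{ABR00} for $bv_0$ and $bv$ respectively, whereas you give a self-contained argument: the computation $(C_1^\omega)'f_0=f_0$ together with $f_0|_{bv_0}\neq 0$ and Proposition \ref{P.SimpleFact1}(ii) puts $1\in\sigma_{pt}(C_1';(bv_0)')$, while $\sigma_{pt}(C_1;bv_0)=\emptyset$, so Sine's criterion (Proposition \ref{P.Non-ME}) kills mean ergodicity in $bv_0$; you then transfer to $bv$ by the restriction principle of Proposition \ref{P.SimpleFact2}(iii), which neatly sidesteps the fact that $1\in\sigma_{pt}(C_1;bv)$ would block a direct application of Proposition \ref{P.Non-ME} there. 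Your route costs a short duality computation but removes an external dependency and makes the mechanism visible with tools already developed in Section 2; the paper's route is shorter on the page. Your closing consistency check on the eigenvectors $x_1^{[m]}$ is also sound and matches \eqref{eq.Autovettore.1}.
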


\begin{proof} (i) It is known that $C_1\in \cL(bv)$ satisfies $\|C_1\|_{bv\to bv}=1$, \cite[Lemma 2.1]{O2} and has spectra given by \eqref{A},
\cite[Corollaries 2.4 \& 2.6]{O2}. In particular, $C_1\in\cL(bv)$ is not compact, but it is power bounded and $\mathbbm{1}:=(1^n)_{n\in\N_0}\in bv$ is an eigenvector corresponding to the eigenvalue $1$. According to \cite[Proposition 4.7]{ABR00}, $C_1\in \cL(bv)$ is not mean ergodic. Lemma \ref{L-nuovo} implies that $C_1$ is not supercyclic in $bv$.

 (ii) The operator $C_1\in \cL(bv_0)$ satisfies $\|C_1\|_{bv_0\to bv_0}=1$ and has spectra given by \eqref{B}; see \cite[Lemma 1.2]{O1}, \cite[Remark 3.1]{SEl-S}.
So, $C_1\in \cL(bv_0)$ is power bounded, but not compact. According to \cite[Proposition 4.4]{ABR00} the operator $C_1$ is not mean ergodic in $bv_0$. That $C_1$ is not supercyclic in $bv_0$ follows from Lemma \ref{L-nuovo}.\end{proof}

\begin{prop}\label{P.11.2} Let $0\leq t<1$.
	\begin{itemize}
		\item[\rm (i)] The operator norm of $C_t\in \cL(bv)$ satisfies $\|C_t\|_{bv\to bv}=1$ and $C_t$ is compact in $bv$ with  spectra equal to 
		\begin{equation}\label{C}
		\sigma_{pt}(C_t;bv)=\Lambda\ \mbox{ and }\ \sigma(C_t;bv)=\Lambda_0.
		\end{equation}
	Moreover, $C_t$ is power bounded and uniformly mean ergodic in $bv$ but, fails to be supercyclic.
	\item[\rm (ii)] The operator norm of $C_t\in \cL(bv_0)$ satisfies $\|C_t\|_{bv_0\to bv_0}=1$ and  $C_t$ is  compact in $bv_0$  with spectra equal to 
	\begin{equation}\label{D}
	\sigma_{pt}(C_t;bv_0)=\Lambda\ \mbox{ and }\ \sigma(C_t;bv_0)=\Lambda_0.
	\end{equation}
	Furthermore,  $C_t$ is power bounded and uniformly mean ergodic in $bv_0$ but,  it is not supercyclic.
	\end{itemize}
	\end{prop}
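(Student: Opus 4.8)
The plan is to reproduce, for the two BK-spaces $bv$ and $bv_0$, the scheme already used for $c$, $c_0$, $cs$ and $N^p$ in the preceding sections: deduce the operator norm and the compactness of $C_t$ from the literature, read off the point spectrum from the eigenvectors $x_t^{[m]}$, obtain the full spectrum from compactness, and then apply Theorem~\ref{T-general} together with Lemma~\ref{L-nuovo}. First I would invoke \cite{SEl-S}, which treats $C_t$ for $t\in[0,1)$ acting in $bv_0$ and in $bv$, to record that $C_t$ exists in each of $bv$ and $bv_0$ (hence $C_t\in\cL(bv)$ and $C_t\in\cL(bv_0)$ by Lemma~\ref{L-1-Sez2}), that $\|C_t\|_{bv\to bv}=\|C_t\|_{bv_0\to bv_0}=1$ (so $C_t$ is power bounded on both), and that $C_t$ is a compact operator on each space. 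Note that the shortcut of Proposition~\ref{P-Nuova-1} is unavailable here, since $\|\cdot\|_{bv}$ is not even a Riesz norm; if one prefers not to quote compactness directly for $bv_0$, one may instead observe that $bv_0=bv\cap c_0$ is a closed $C_t$-invariant subspace of $bv$ (using $C_t\in\cL(c_0)$ from Proposition~\ref{P.5.2}) and apply Proposition~\ref{P.SimpleFact2}(i).

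Next I would pin down the spectra. The inclusions $d_1\subseteq\ell^1\subseteq bv_0\subseteq bv$ are continuous: $d_1\subseteq\ell^1$ was noted after Proposition~\ref{P-Nuova-2}, while for $x\in\ell^1$ one has $x\in c_0$ and $\|x\|_{bv}=|x_0|+\sum_{k\geq1}|x_k-x_{k+1}|\leq 3\|x\|_{1}$, so $x\in bv_0$. By \cite[Lemma~3.6(ii)]{CR4} each eigenvector $x_t^{[m]}$, $m\in\N_0$, belongs to $d_1$, hence to $bv_0$ and to $bv$; in particular $x_t^{[0]}=(t^n)_{n\in\N_0}\in bv_0\subseteq bv$. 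Consequently Lemma~\ref{L-2Sez2}(i) gives $\sigma_{pt}(C_t;bv)=\sigma_{pt}(C_t;bv_0)=\Lambda$, and since $C_t$ is compact with infinite point spectrum, Lemma~\ref{L-2Sez2}(ii)(b) yields $\sigma(C_t;bv)=\sigma(C_t;bv_0)=\Lambda_0$, which establishes \eqref{C} and \eqref{D}.

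It then remains to apply the general machinery. For $X\in\{bv,bv_0\}$ the hypotheses of Theorem~\ref{T-general} are met: $x_t^{[0]}\in X$ and $C_t\in\cL(X)$ is compact. Hence $C_t$ is power bounded and uniformly mean ergodic in both $bv$ and $bv_0$, with $\|(C_t)_{[n]}-P\|_{X\to X}\to0$, where $P$ is the projection onto ${\rm span}\{x_t^{[0]}\}$ along $(I-C_t)(X)$. Finally, since $c_{00}={\rm span}(\cE)\subseteq bv_0\subseteq bv$ and both spaces include continuously into $\omega$, Lemma~\ref{L-nuovo} shows that $C_t$ is not supercyclic in $bv$ or in $bv_0$. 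The only non-routine ingredient is the external input from \cite{SEl-S} on the norm and, especially, the compactness of $C_t$ on $bv$; everything after that is a direct specialization of Theorem~\ref{T-general} and Lemma~\ref{L-nuovo}, so the main obstacle, such as it is, is bibliographic rather than conceptual.
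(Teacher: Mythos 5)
Your proposal is correct and follows essentially the same route as the paper: the norm and compactness are imported from \cite{SEl-S}, and then Theorem~\ref{T-general} (using $x_t^{[0]}\in\ell^1\subseteq bv_0\subseteq bv$) and Lemma~\ref{L-nuovo} give power boundedness, uniform mean ergodicity and non-supercyclicity. The only cosmetic difference is that the paper also quotes the spectra \eqref{C} and \eqref{D} directly from \cite{SEl-S}, whereas you rederive them from $\{x_t^{[m]}:m\in\N_0\}\subseteq d_1\subseteq\ell^1\subseteq bv_0$ together with Lemma~\ref{L-2Sez2}, which is a perfectly valid (and slightly more self-contained) justification.
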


\begin{proof} (i)   It is known that $C_t\in \cL(bv)$ satisfies $\|C_t\|_{bv\to bv}=1$, \cite[Corollary 7.1]{SEl-S}, and so $C_t$ is power bounded. The operator $C_t\in \cL(bv)$ is compact, \cite[Lemma 7.2]{SEl-S}, and has spectra given by \eqref{C},
\cite[Theorems 7.1 \& 7.2]{SEl-S}. Moreover, $x_t^{[0]}\in\ell^1\subseteq bv$ and so Theorem  \ref{T-general} yields that $C_t\in \cL(bv)$ is uniformly mean ergodic, whereas Lemma \ref{L-nuovo} shows that $C_t$ is not supercyclic in $bv$.

(ii) Theorem 1.1 of \cite{SEl-S} shows that $C_t\in \cL(bv_0)$ satisfies $\|C_t\|_{bv_0\to bv_0}=1$ and that $C_t$ is a compact operator with spectra given by \eqref{D}.
Proposition \ref{P.SimpleFact2}(iii) implies that $C_t\in \cL(bv_0)$ is uniformly mean ergodic (as does Theorem \ref{T-general} because $x_t^{[0]}\in\ell^1\subseteq bv_0$) and Lemma \ref{L-nuovo} implies that $C_t$ is not supercyclic in $bv_0$.\end{proof}

We now turn to the operators $C_t$ acting in $bv_p$ for $p>1$. So, fix $p\in (1,\infty)$. The linear space
\[
bv_p:=\left\{x\in\omega\, :\, \|x\|_{bv_p}:=\left(|x_0|^p+\sum_{k=1}^\infty |x_{k+1}-x_k|^p\right)^{1/p}<\infty\right\}
\]
equipped with the norm $\|\cdot\|_{bv_p}$ is a BK-space, \cite[Theorem 2.1]{BA}, and satisfies $\cE\subseteq bv_p$. Actually $\ell^p\subseteq bv_p$ with a continuous (proper) inclusion, \cite[Theorem 2.4]{BA}. Since $bv_p$ has a basis, \cite[Theorem 3.1]{BA}, it is necessarily separable. The map $T\colon bv_p\to \ell^p$ defined by 
\begin{equation}\label{eq.11.1}
	T_px:=(x_0,\ x_1-x_0,\ x_2-x_1,\ x_3-x_2,\ldots),\quad x=(x_n)_{n\in\N_0}\in bv_p,
	\end{equation}
is a linear isomorphism from $bv_p$ onto $\ell^p$ with inverse map given by 
\begin{equation}\label{eq.11.2}
	T_p^{-1}y:=(y_0,\ y_0+y_1,\ y_0+y_1+y_2, \ldots),\quad y=(y_n)_{n\in\N_0}\in \ell^p;
\end{equation}
see Theorem 2.2 and its proof in \cite{BA}. In particular, $bv_p$ is reflexive.

For $t=1$ the operator $C_1\in \cL(bv_p)$ satisfies $\|C_1\|_{bv_p\to bv_p}=1$, \cite[Theorem 3.1]{AB}, and has spectra
\[
\sigma_{pt}(C_1;bv_p)=\{1\}\ \mbox{ and }\ \sigma(C_1;bv_p)=\left\{z\in\C\, :\, \left|z-\frac{1}{2}\right|\leq \frac{1}{2}\right\},
\]
\cite[Theorems 3.3 \& 3.4]{AB}. Hence, $C_1$ is not compact in $bv_p$, but it is power bounded. Moreover,  $C_1\in \cL(bv_p)$ is not mean ergodic, \cite[Proposition 4.7]{ABR00}, and (via Lemma \ref{L-nuovo}) not supercyclic.

Concerning the case when  $0\leq t<1$ requires some preparation.  It was shown in Section 10 that the diagonal operator $D_\varphi\in \cL(\omega)$, where $D_\varphi$ is given by \eqref{Dia-op}.

\begin{lemma}\label{L.11.1} For each $p\in (1,\infty)$ it is the case that $D_\varphi(bv_p)\subseteq bv_p$. Hence, the restriction of $D_\varphi$ to $bv_p$, again denoted by $D_\varphi$, exists and belongs to $\cL(bv_p)$. Moreover, $D_\varphi\colon bv_p\to bv_p$ is a compact operator.
\end{lemma}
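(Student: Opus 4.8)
The plan is to work directly from the definition of $bv_p$, using two ingredients. The first is a coordinate growth estimate: telescoping together with H\"older's inequality gives, for every $x\in bv_p$ and $k\in\N_0$,
\[
|x_k|\le |x_0|+\sum_{j=0}^{k-1}|x_{j+1}-x_j|\le |x_0|+k^{1-\frac1p}\Bigl(\sum_{j=0}^{k-1}|x_{j+1}-x_j|^p\Bigr)^{\!1/p}\le 2(k+1)^{1-\frac1p}\|x\|_{bv_p}.
\]
The second is the elementary identity
\[
\frac{x_{k+1}}{k+2}-\frac{x_k}{k+1}=\frac{x_{k+1}-x_k}{k+2}-\frac{x_k}{(k+1)(k+2)},\qquad k\in\N_0,
\]
which splits the increments of $D_\varphi x$ into a part governed by the increments of $x$ and a part governed by $x$ itself.

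First I would prove $D_\varphi(bv_p)\subseteq bv_p$ with a norm bound. Writing $\|D_\varphi x\|_{bv_p}^p=|x_0|^p+\sum_{k\ge1}\bigl|\tfrac{x_{k+1}}{k+2}-\tfrac{x_k}{k+1}\bigr|^p$, inserting the identity, and using $|a+b|^p\le2^{p-1}(|a|^p+|b|^p)$, the sum is dominated by a constant times $\sum_k\frac{|x_{k+1}-x_k|^p}{(k+2)^p}+\sum_k\frac{|x_k|^p}{(k+1)^p(k+2)^p}$. The first sum is at most $\|x\|_{bv_p}^p$; for the second, the growth estimate yields $\frac{|x_k|^p}{(k+1)^p(k+2)^p}\le 2^p\|x\|_{bv_p}^p\,(k+1)^{-(p+1)}$, whose sum over $k$ is finite. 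Thus $D_\varphi x\in bv_p$ and $\|D_\varphi x\|_{bv_p}\le c_p\|x\|_{bv_p}$ for a constant $c_p$ depending only on $p$, so $D_\varphi|_{bv_p}\in\cL(bv_p)$. (Once the inclusion is known, continuity also follows automatically from the closed graph theorem, as in Remark~\ref{remark2A}, since $D_\varphi\in\cL(\omega)$ and $bv_p\subseteq\omega$ continuously.)

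For compactness I would approximate $D_\varphi|_{bv_p}$ in operator norm by finite rank operators. For $n\in\N_0$ set $D_nx:=\sum_{j=0}^n\frac{x_j}{j+1}e_j$; since $\cE\subseteq bv_p$ and the coordinate functionals $f_j|_{bv_p}$ are continuous by Lemma~\ref{L-C}, each $D_n$ is a continuous finite rank, hence compact, operator on $bv_p$. The vector $(D_\varphi-D_n)x$ vanishes in coordinates $0,\dots,n$ and equals $x_k/(k+1)$ in coordinate $k>n$, so
\[
\|(D_\varphi-D_n)x\|_{bv_p}^p=\frac{|x_{n+1}|^p}{(n+2)^p}+\sum_{k\ge n+1}\Bigl|\frac{x_{k+1}}{k+2}-\frac{x_k}{k+1}\Bigr|^p,
\]
and rerunning the estimates above restricted to these tails (together with $\frac{|x_{n+1}|^p}{(n+2)^p}\le 2^p\|x\|_{bv_p}^p/(n+2)$ for the isolated term) gives $\|(D_\varphi-D_n)x\|_{bv_p}^p\le\varepsilon_n\|x\|_{bv_p}^p$ with $\varepsilon_n\to0$, since each contribution is $\|x\|_{bv_p}^p$ times a quantity tending to $0$ (the first $O(n^{-1})$, the second $O(n^{-p})$, the third a tail of $\sum_k(k+1)^{-(p+1)}$). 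Hence $\|D_\varphi-D_n\|_{bv_p\to bv_p}\to0$ and $D_\varphi|_{bv_p}$ is compact as a uniform limit of compact operators. The one point requiring genuine care is the coordinate growth estimate: since $bv_p\not\subseteq\ell^\infty$ for $p>1$ one cannot bound $|x_k|$ by a constant, and controlling $\sum_k|x_k|^p(k+1)^{-p}(k+2)^{-p}$ really needs $|x_k|=O\bigl((k+1)^{1-1/p}\bigr)$; everything else is routine bookkeeping.
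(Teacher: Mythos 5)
Your proof is correct, but it is organized quite differently from the paper's. The paper transports $D_\varphi$ to $\ell^p$ via the isomorphism $T_p$ of \eqref{eq.11.1}--\eqref{eq.11.2}: it shows $T_pD_\varphi T_p^{-1}=A-B=:F$, where $Ay=(\varphi_ny_n)_{n\in\N_0}$ is the diagonal operator on $\ell^p$ (compact by an appeal to \cite[Lemma 3.3]{CR4}) and $(By)_n=\frac{1}{n(n+1)}\sum_{j=0}^{n-1}y_j$ is proved compact by finite-rank approximation in $\ell^p$; compactness of $D_\varphi$ on $bv_p$ then follows from the ideal property of compact operators. Your identity $\frac{x_{k+1}}{k+2}-\frac{x_k}{k+1}=\frac{x_{k+1}-x_k}{k+2}-\frac{x_k}{(k+1)(k+2)}$ is exactly the coordinate form of that decomposition $F=A-B$ (put $y=T_px$, so $x_k=\sum_{j\le k}y_j$), so the underlying algebra is identical; what differs is the bookkeeping. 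You stay inside $bv_p$, prove boundedness from scratch, and approximate $D_\varphi$ itself by the truncations $D_n$, at the price of needing the growth estimate $|x_k|\le 2(k+1)^{1-1/p}\|x\|_{bv_p}$ --- which, as you rightly flag, is the one non-routine ingredient, since elements of $bv_p$ need not be bounded for $p>1$. (The paper gets away with the cruder bound $\sum_{j<n}|y_j|\le n\|y\|_p$, i.e.\ $|x_{n-1}|\le n\|x\|_{bv_p}$ after conjugation, which still yields a summable tail because $p>1$; your H\"older refinement to $O\bigl((k+1)^{1-1/p}\bigr)$ is sharper than necessary but harmless.) The trade-off: your argument is self-contained and avoids both the conjugation and the external compactness result for diagonal operators on solid Banach lattices, whereas the paper's version reuses existing machinery and confines the $bv_p$-specific work to a single verification of the identity $D_\varphi=T_p^{-1}FT_p$.
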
 
	\begin{proof} Fix  $p\in (1,\infty)$. Define a linear map $A$ in $\ell^p$ by
		\[
		Ay:=(\varphi_ny_n)_{n\in\N_0},\quad y=(y_n)_{n\in\N_0}\in\ell^p.
		\]
		Since $X:=\ell^p$ is a solid Banach lattice, it follows from \cite[Lemma 3.3]{CR4} that $\|A\|_{\ell^p\to\ell^p}=1$ and $A\in\cL(\ell^p)$ is a compact operator.
		
		For each $y\in \ell^p$ define $By\in\omega$ by
		\[
		(By)_0:=0 \ \mbox{ and } \ (By)_n:=\frac{1}{n(n+1)}\sum_{j=0}^{n-1}y_j,\ \mbox{ for }\ n\geq 1.
		\]
		The claim is that $B\in \cL(\ell^p)$. To see this, fix $y\in\ell^p$. Since $|y_n|\leq \|y\|_p$ for each $n\in\N_0$, it follows that
		\begin{align*}
			\|By\|_p^p &\leq \frac{1}{(1\cdot 2)^p}|y_0|^p+\frac{1}{(2\cdot 3)^p}(|y_0|+|y_1|)^p+\frac{1}{(3\cdot 4)^p}(|y_0|+|y_1|+|y_2|)^p+\ldots\\
			& \leq \frac{1}{2^p} \|y\|_p^p+\frac{1}{3^p} \|y\|_p^p+\frac{1}{4^p} \|y\|_p^p+\ldots\\
			&=\left(\sum_{k=2}^\infty\frac{1}{k^p}\right) \|y\|_p^p.
		\end{align*}
	This inequality implies that $\|B\|_{\ell^p\to\ell^p}\leq \left(\sum_{k=2}^\infty\frac{1}{k^p}\right)^{1/p}<\infty$.
	
	The next claim is that $B\in \cL(\ell^p)$ is compact. To see this define a finite rank operator $B^{(N)}\in\cL(\ell^p)$, for each $N\geq 3$, by 
		\[
		(B^{(N)}y)_0:=0, \  (B^{(N)}y)_n:=(By)_n\  \mbox{for}\  1\leq n< N \ \mbox{and}\  (B^{(N)}y)_n:=0 \ \mbox{for}\  n\geq N,
		\]
		and every $y\in\ell^p$. Observe that 
		\begin{align*}
			\|(B-B^{(N)})y\|_p^p&=\sum_{n=N}^\infty\left|\frac{1}{n(n+1)}\sum_{j=0}^{n-1}y_j\right|^p\\
			&\leq \sum_{n=N}^\infty\frac{1}{n^p(n+1)^p}\left(\sum_{j=0}^{n-1}|y_j|\right)^p\\
			&\leq \sum_{n=N}^\infty\frac{1}{n^p(n+1)^p} (n\|y\|_p)^p=\left(\sum_{n=N}^\infty\frac{1}{(n+1)^p}\right)\|y\|_p^p.
		\end{align*}
	Accordingly, $\|B-B^{(N)}\|_{\ell^p\to\ell^p}\leq \left(\sum_{n=N}^\infty\frac{1}{(n+1)^p}\right)^{1/p}$ for every $N\geq 3$, which implies that $\|B-B^{(N)}\|_{\ell^p\to\ell^p}\to 0$ for $N\to\infty$ and hence, that $B\in \cL(\ell^p)$ is compact. So, the operator $F:=(A-B)$ is also compact in $\ell^p$. Direct calculation via \eqref{eq.11.1} and \eqref{eq.11.2} shows that $D_\varphi y=(T^{-1}_pFT_p)y$ for every $y\in bv_p\subseteq \omega$. By the ideal property for compact operators it follows that $D_\varphi\in \cL(bv_p)$ is compact.
		\end{proof}
	
	The right-shift operator $S\in \cL(\omega)$ is given by \eqref{10.1}. For $x\in bv_p$ note that
	\[
	\|Sx\|^p_{bv_p}=\|(0, x_0, x_1,x_2,\ldots)\|^p_{bv_p}=0^p+|x_0-0|^p+|x_1-x_0|^p+|x_2-x_1|^p+\ldots=\|x\|_{bv_p}^p.
	\]
	Hence, $S\in \cL(bv_p)$ is an \textit{isometry}. Accordingly, the formal series \eqref{eq.10.2}, when considered in $\cL(bv_p)$, satisfies $\sum_{n=0}^\infty t^n\|S^n\|_{bv_p\to bv_p}=1/(1-t)<\infty$ for each $t\in [0,1)$, that is, the series $R_t=\sum_{n=0}^\infty t^nS^n$ is absolutely convergent in  $\cL_b(bv_p)$.
	
	\begin{prop}\label{P.11.2} For each $p\in (1,\infty)$ and  $t\in [0,1)$ the operator $C_t\in \cL(bv_p)$ is compact  with spectra
		\begin{equation}\label{eq.11.3}
			\sigma_{pt}(C_t;bv_p)=\Lambda\ \mbox{ and }\ \sigma(C_t;bv_p)=\Lambda_0.
			\end{equation}
		Moreover, $C_t\in \cL(bv_p)$ is power bounded, uniformly mean ergodic, but not supercyclic.
		\end{prop}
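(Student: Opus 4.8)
The plan is to realize $C_t$ on $bv_p$ as the composition of a compact operator with a bounded one, and then to apply the general machinery of Section 3. First I would record the factorization $C_t^\omega=D_\varphi^\omega R_t^\omega$ in $\cL(\omega)$: a direct coordinate computation gives $(R_t^\omega x)_j=\sum_{n=0}^j t^n x_{j-n}=\sum_{i=0}^j t^{j-i}x_i$ and hence $(D_\varphi^\omega R_t^\omega x)_j=\tfrac{1}{j+1}\sum_{i=0}^j t^{j-i}x_i=(C_t^\omega x)_j$ for every $x\in\omega$, $j\in\N_0$. Since $S\in\cL(bv_p)$ is an isometry (as noted just before the statement), the series $R_t=\sum_{n=0}^\infty t^nS^n$ converges absolutely in $\cL_b(bv_p)$, so $R_t\in\cL(bv_p)$; because $bv_p$ is a BK-space, operator-norm convergence forces coordinatewise convergence, so this $R_t$ is the restriction of $R_t^\omega$ to $bv_p$. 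Likewise $D_\varphi\in\cL(bv_p)$ is the restriction of $D_\varphi^\omega$, and it is compact by Lemma \ref{L.11.1}. Consequently $C_t^\omega(bv_p)=D_\varphi R_t(bv_p)\subseteq bv_p$, i.e.\ $C_t$ exists in $bv_p$ with $C_t=D_\varphi R_t\in\cL(bv_p)$, and $C_t$ is compact by the ideal property of compact operators.

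Next I would pin down the spectra. By Lemma \ref{L-2Sez2}(i), $\sigma_{pt}(C_t;bv_p)=\{\tfrac{1}{m+1}\,:\, x_t^{[m]}\in bv_p\}$. From \eqref{eq.EigenvalueC} the nonzero coordinates of $x_t^{[m]}$ are $(x_t^{[m]})_{m+n}=\binom{m+n}{n}t^n$ for $n\geq 0$, which grow only polynomially in $n$ while $t^n$ decays geometrically; hence $x_t^{[m]}\in\ell^p$ for every $m\in\N_0$, and since $\ell^p\subseteq bv_p$ with continuous inclusion, $x_t^{[m]}\in bv_p$. Therefore $\sigma_{pt}(C_t;bv_p)=\Lambda$, which is an infinite set, so Lemma \ref{L-2Sez2}(ii)(b) yields $\sigma(C_t;bv_p)=\Lambda_0$, establishing \eqref{eq.11.3}.

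Finally I would apply Theorem \ref{T-general} with $X=bv_p$: hypothesis (i) holds since $x_t^{[0]}=(t^n)_{n\in\N_0}\in\ell^p\subseteq bv_p$, and hypothesis (ii) holds by the first paragraph. Hence $C_t$ is power bounded and uniformly mean ergodic in $bv_p$ (indeed $\|(C_t)_{[n]}-P\|_{bv_p\to bv_p}\to 0$, where $P$ is the projection onto ${\rm span}\{x_t^{[0]}\}$ along $(I-C_t)(bv_p)$). For the last assertion, $c_{00}\subseteq bv_p$ (in fact $\cE\subseteq bv_p$), so Lemma \ref{L-nuovo} shows $C_t$ is not supercyclic. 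The only point requiring care is the identification in the first paragraph of the abstractly defined operators $R_t$ and $D_\varphi$ on $bv_p$ with the appropriate restrictions of the corresponding operators on $\omega$, so that the factorization $C_t=D_\varphi R_t$ transfers to $bv_p$; everything else is routine once Lemma \ref{L.11.1} is in hand.
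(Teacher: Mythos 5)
Your proposal is correct and follows essentially the same route as the paper: the factorization $C_t=D_\varphi R_t$ with $R_t=\sum_n t^nS^n$ absolutely convergent (since $S$ is an isometry on $bv_p$) and $D_\varphi$ compact by Lemma \ref{L.11.1}, then Lemma \ref{L-2Sez2} for the spectra, Theorem \ref{T-general} for power boundedness and uniform mean ergodicity, and Lemma \ref{L-nuovo} for non-supercyclicity. The only cosmetic differences are that you verify the identity $C_t^\omega=D_\varphi R_t$ by direct coordinate computation (the paper cites a lemma of Curbera--Ricker) and you check $x_t^{[m]}\in\ell^p$ directly from the binomial growth of its coordinates (the paper routes this through $x_t^{[m]}\in d_1\subseteq\ell^p$).
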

\begin{proof}
	Fix $p\in (1,\infty)$ and $t\in [0,1)$. By the discussion prior to the proposition, $R_t\in \cL(bv_p)$ and so  Lemma \ref{L.11.1} implies that 
	\begin{equation}\label{eq.11.4}
		D_\varphi R_t=\sum_{n=0}^\infty t^nD_\varphi S^n\in \cL(bv_p)
		\end{equation}
	is a compact operator. According to \cite[Lemma 3.2]{CR4} the identity $D_\varphi R_t=C_t$ holds  in $\cL(\omega)$ and so $C_t^\omega(bv_p)\subseteq bv_p$; see \eqref{eq.11.4}. Lemma \ref{L-1-Sez2} yields that $C_t\in \cL(bv_p)$ with $C_t$ compact, again via \eqref{eq.11.4}.
	
	For each $t\in [0,1)$ the set $\{x_t^{[m]}\, :\, m\in\N_0\}\subseteq d_1\subseteq \ell^p\subseteq bv_p$ and hence, Lemma \ref{L-2Sez2} (with $X=bv_p$) implies that \eqref{eq.11.3} is valid. Furthemore, Theorem \ref{T-general} shows that $C_t\in \cL(bv_p)$ is power bounded and uniformly mean ergodic. However, $C_t$ is not supercyclic; see Lemma \ref{L-nuovo}.\end{proof}
	
	\section{The generalized Hahn spaces $h_d$}
	
	A vector $x\in\omega$ belongs to $h$ if and only if $x\in c_0$ and
	\[
\|x\|_h:=	\sum_{k=0}^\infty (k+1)|x_{k+1}-x_k|<\infty.
	\]
	Let $|||x|||:=\|x\|_h+\|x\|_\infty$ for $x\in h$. H. Hahn, \cite{Ha}, showed that $(h, |||\cdot|||)$ is a BK-space and K.C. Rao, \cite{Ra}, showed that $(h, \|\cdot\|_h)$ is a BK-space with $\cE\subseteq h$ and $\cE$  is a basis for $h$. Since $\|x\|_h\leq |||x|||$ for every $x\in h$, the norms $\|\cdot\|_h$ and $|||\cdot|||$ are equivalent on $h$. The BK-space $(h, \|\cdot\|_h)$ is called the \textit{Hahn space}. This space is well studied; see \cite{MRT} and the references therein.
	
	Since $C_1e_0=(1/(n+1))_{n\in\N_0}\not\in h$, the operator $C_1$ does not exist in $h$. 
	So, we only consider $C_t$ for $t\in [0,1)$. 
	
	\begin{prop}\label{PN.12.1} Let $0\leq t<1$. The operator $C_t\in \cL(h)$ is compact and has spectra
		\begin{equation}\label{E}
		\sigma_{pt}(C_t;h)=\Lambda\ \mbox{ and }\ \sigma(C_t;h)=\Lambda_0.
		\end{equation}
		Moreover,  $C_t$ is power bounded and uniformly mean ergodic in $h$ but, it is not supercyclic.
	\end{prop}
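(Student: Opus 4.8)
The plan is to proceed as for the spaces treated in Sections~4--11: establish that $C_t\in\cL(h)$ is a \emph{compact} operator, identify its spectra as $\sigma_{pt}(C_t;h)=\Lambda$ and $\sigma(C_t;h)=\Lambda_0$ (which is \eqref{E}), and then read off power boundedness, uniform mean ergodicity and non-supercyclicity from Theorem~\ref{T-general} and Lemma~\ref{L-nuovo}. For $t\in(0,1)$ the facts that $C_t\in\cL(h)$, that $C_t$ is compact, and that the spectra are as in \eqref{E} may be quoted from \cite{SEl-S}, where $C_t$ acting in $h$ is analysed; the case $t=0$ is $C_0=D_\varphi$, which is handled in the next paragraph. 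If instead one wants a self-contained argument, I would use the factorization $C_t=D_\varphi R_t$ (valid in $\cL(\omega)$ by \cite[Lemma~3.2]{CR4}) together with the elementary estimate $\|S^n\|_{h\to h}\le 2n+1$ for the right-shift $S$ of \eqref{10.1} — so that $R_t=\sum_{n\ge0}t^nS^n$ converges absolutely in $\cL_b(h)$ for every $t\in[0,1)$ — and $\|D_\varphi x\|_h\le 2\|x\|_h$ for $D_\varphi$ as in \eqref{Dia-op}; by the ideal property for compact operators, compactness of $C_t$ on $h$ then reduces to compactness of $D_\varphi$ on $h$.

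Compactness of $D_\varphi\in\cL(h)$ is, to my mind, the only genuinely delicate point of the whole argument. The cleanest route is to transport the problem to $\ell^1$ via the isometric isomorphism $U\colon h\to\ell^1$, $Ux:=\bigl((k+1)(x_{k+1}-x_k)\bigr)_{k\in\N_0}$, whose inverse is $(U^{-1}y)_n=-\sum_{k\ge n}\frac{y_k}{k+1}$. A direct computation shows that $UD_\varphi U^{-1}$ acts on $y\in\ell^1$ by $y\mapsto\bigl(\tfrac{y_n}{n+2}\bigr)_n+\bigl(\tfrac{1}{n+2}\sum_{k\ge n}\tfrac{y_k}{k+1}\bigr)_n$. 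The first summand is a diagonal operator on $\ell^1$ with null diagonal, hence compact; the second is a Hardy-type averaging operator which one checks is compact on $\ell^1$ by truncating the outer index and using that $\bigl(1+\log(k+2)\bigr)/(k+1)\to0$. Hence $D_\varphi$, and therefore $C_t$, is compact on $h$.

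For the point spectrum I would invoke Lemma~\ref{L-2Sez2}(i): it remains only to decide for which $m\in\N_0$ the eigenvector $x_t^{[m]}$ of \eqref{eq.EigenvalueC} belongs to $h$. Since the nonzero coordinates of $x_t^{[m]}$ are $\binom{j}{m}t^{\,j-m}$ for $j\ge m$, their successive increments are $O\!\bigl(j^{\,m}t^{\,j}\bigr)$, so $\sum_k(k+1)\bigl|(x_t^{[m]})_{k+1}-(x_t^{[m]})_k\bigr|<\infty$ for every $m$; thus $x_t^{[m]}\in h$ for all $m\in\N_0$ and $\sigma_{pt}(C_t;h)=\Lambda$. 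As this set is infinite and $C_t\in\cL(h)$ is compact, Lemma~\ref{L-2Sez2}(ii)(b) gives $\sigma(C_t;h)=\Lambda_0$, which is \eqref{E}.

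Finally, I would check the two hypotheses of Theorem~\ref{T-general} for $X=h$: hypothesis (ii) is the compactness just established, and hypothesis (i) holds because $x_t^{[0]}=(t^n)_{n\in\N_0}\in h$, since $\|x_t^{[0]}\|_h=(1-t)\sum_{k\ge0}(k+1)t^k=1/(1-t)<\infty$. Theorem~\ref{T-general} then yields that $C_t$ is power bounded and uniformly mean ergodic in $h$, with $\|(C_t)_{[n]}-P\|_{h\to h}\to0$, where $P$ is the projection onto ${\rm span}\{x_t^{[0]}\}$ along $(I-C_t)(h)$. Since $\cE\subseteq h$, in particular $c_{00}\subseteq h$ and $h\subseteq\omega$ continuously (as $h$ is a BK-space), Lemma~\ref{L-nuovo} shows that $C_t$ is not supercyclic in $h$.
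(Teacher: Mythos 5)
Your proposal is correct and follows essentially the same route as the paper: the paper likewise quotes \cite[Corollary 5.1, Lemma 5.2 \& Theorem 5.1]{SEl-S} for the compactness and spectra of $C_t$ in $h$, then checks $\|x_t^{[0]}\|_h=(1-t)\sum_{k\ge0}(k+1)t^k<\infty$ in order to apply Theorem \ref{T-general}, and invokes Lemma \ref{L-nuovo} for non-supercyclicity. Your optional self-contained argument via the factorization $C_t=D_\varphi R_t$ and the isometry of $h$ with $\ell^1$ is also sound, but it amounts to the special case $d=(n+1)_{n\in\N_0}$ of the paper's own Theorem \ref{T.12.5} (see Example \ref{Ex.12.6}(i)), so it is not needed here.
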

	
\begin{proof}	According to \cite[Corollary 5.1 \& Lemma 5.2]{SEl-S} the operator $C_t\in \cL(h)$ is compact and, by \cite[Theorem 5.1 (1)(2)]{SEl-S}, it has spectra given by \eqref{E}.

	Since $x_t^{[0]}=(t^n)_{n\in\N_0}$ satisfies
	\[
	\|x_t^{[0]}\|_h=\sum_{k=0}^\infty (k+1)|t^{k+1}-t^k|=(1-t)\sum_{k=0}^\infty (k+1)t^k<\infty,
	\]
	it follows from Theorem \ref{T-general} that $C_t\in \cL(h)$ is power bounded and uniformly mean ergodic. Lemma \ref{L-nuovo} yields that $C_t$ is not supercyclic in $h$.\end{proof}
	
	Let $d=(d_n)_{n\in\N_0}$ be a monotone increasing sequence of numbers in $(0,\infty)$ with $d_n\uparrow \infty$. G. Goes, \cite{Go}, introduced the \textit{generalized Hahn space}
	\[
	h_d:=c_0\cap\left\{x\in\omega\, :\, \|x\|_{h_d}:=\sum_{k=0}^\infty d_k|x_{k+1}-x_k|<\infty\right\}.
	\]
	For the norm $\|\cdot\|_{h_d}$ it is known that $h_d\subseteq c_0\subseteq \omega$ is a BK-space with $\cE\subseteq h_d$ and $\cE$ is a basis for $h_d$, \cite[Proposition 2.1]{MRT}. Of course, $h=h_d$ for the particular choice $d=(n+1)_{n\in\N_0}$. It is always assumed that $d$ satisfies $d_n\geq 1$ for all $n\in\N_0$, in which case we write $1\leq d_n\uparrow \infty$. Otherwise set $d_n'={d_n}/{d_0}$ and observe that the identity
	\[
	\sum_{k=0}^\infty d_k|x_{k+1}-x_k|=d_0\sum_{k=0}^\infty d_k'|x_{k+1}-x_k|,\quad x\in\omega,
	\]
	implies $h_d=h_{d'}$ with equivalent norms, where $d'=(d'_n)_{n\in\N_0}$.
	
	Given $1\leq d_n\uparrow \infty$ let
	\[
	bs_d:=\left\{x\in\omega\,:\, \sup_{n\in\N_0}\frac{1}{d_n}\left|\sum_{k=0}^nx_k\right|<\infty\right\}.
	\]
	Equipped with the norm
	\[
	\|x\|_{bs_d}:=\left|\left|\left(\frac{1}{d_n}\left|\sum_{k=0}^nx_k\right|\right)_{n\in\N_0}\right|\right|_\infty,\quad x\in bs_d,
	\]
	it is known that $bs_d$ is a BK-space, \cite[Remark 2.2]{MRT}, which is isometrically isomorphic to the dual Banach space $h_d'$, \cite[Proposition 2.3]{MRT}. The duality between $h_d$ and $bs_d$ is the natural one. Whenever $x\in\ell^1$ we have (as $1\leq d_n$ for all $n\in\N_0$) that
	\[
	\frac{1}{d_n}\left|\sum_{k=0}^n x_k\right|\leq \sum_{k=0}^n|x_k|\leq \|x\|_1,\quad n\in\N_0.
	\]
	Hence, $\ell^1\subseteq h_d'$ continuously.
	
	Observe that $e_0\in h_{d}$ and $x:=C_1^\omega e_0=(1/(n+1))_{n\in\N_0}$ satisfies
	\begin{equation}\label{eq.Nuova}
	\sum_{k=0}^\infty d_k\left|x_{k+1}-x_k\right|=\sum_{k=0}^\infty \frac{d_k}{(k+2)(k+1)}.
	\end{equation}
	So, if $1\leq d_n\uparrow \infty$ has the property that $(\frac{d_n}{(n+2)(n+1)})_{n\in\N_0}\not\in \ell^1$, then \eqref{eq.Nuova} implies that
	 $C_1^\omega e_0\not\in h_d$. In this case $C_1$ does not exist in $h_d$. So, we will concentrate mainly on $C_t$ for $t\in [0,1)$.
	 
	 Let $A=(a_{n,k})_{n,k\in\N_0}$ be an infinite matrix with entries from $\C$ and $X$, $Y$ be Banach sequence spaces in $\omega$. For $x=(x_n)_{n\in\N_0}\in X$ write $A_nx:=\sum_{k=0}^\infty a_{n,k}x_k$, for $n\in\N_0$, and $Ax:=(A_nx)_{n\in\N_0}$, provided all series converge. One seeks conditions on $A$ which ensure that $Ax\in Y$, for every $x\in X$, and the linear map $x\mapsto Ax$, for $x\in X$, belongs to $\cL(X,Y)$, \cite[Sections 1 \& 3]{MRT}. For the choice $X=Y=h_d$ with $1\leq d_n\uparrow \infty$ it is known, \cite[Theorem 3.9]{MRT}, that $A$ induces an operator in $\cL(h_d)$ if and only if  column $k$ of $A$ belongs to $c_0$, for every $k\in\N_0$, and 
	 \[
	 \sup_{m\in\N_0}\left(\frac{1}{d_m}\sum_{n=0}^\infty d_n\left|\sum_{k=0}^m(a_{n,k}-a_{n+1,k})\right|\right)<\infty.
	 \]
	
	Given $t\in [0,1]$ let the matrix $A(t)=(a_{n,k}(t))_{n,k\in\N_0}$ be defined as follows. Namely, for each $n\in\N_0$, set
	\begin{equation}\label{eq.12.1}
		a_{n,k}(t):=\left\{\begin{array}{ll}
			\frac{t^{n-k}}{n+1}, & 0\leq k\leq n,\\
			0, & k>n.
		\end{array}\right.
		\end{equation}
	According to \eqref{Ces-op}, the matrix $A(t)$ represents the generalized Cesàro operator $C_t^\omega\in\cL(\omega)$ relative to the basis $\cE$ for $\omega$. For the particular matrix $A(t)$ given by \eqref{eq.12.1} it is clear that every column of $A(t)$ belongs to $c_0$. Accordingly, 
	Theorem 3.9 of \cite{MRT} mentioned above yields the following result.
	
	\begin{prop}\label{P.12.1} Let $1\leq d_n\uparrow\infty$ and $t\in [0,1]$. Then $C_t$ exists in $h_d$, that is, $C_t\in \cL(h_d)$ if and only if the sequence
			\begin{equation}\label{eq.12.2}
				\left(\frac{1}{d_m}\sum_{n=0}^\infty d_n\left|\sum_{k=0}^m (a_{n,k}(t)-a_{n+1,k}(t))\right|\right)_{m\in\N_0}\in\ell^\infty.
				\end{equation}
		\end{prop}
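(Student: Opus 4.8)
The plan is to recognize the restriction of $C_t^\omega$ to $h_d$ as the matrix operator induced by $A(t)$ and then to quote verbatim the characterization from \cite[Theorem 3.9]{MRT} recalled just before the statement. First I would note that the matrix $A(t)=(a_{n,k}(t))_{n,k\in\N_0}$ defined by \eqref{eq.12.1} is lower triangular, so that for every $x=(x_k)_{k\in\N_0}\in\omega$ and every $n\in\N_0$ the series $A(t)_nx=\sum_{k=0}^\infty a_{n,k}(t)x_k=\frac{1}{n+1}\sum_{k=0}^n t^{n-k}x_k$ is in fact a finite sum; comparison with \eqref{Ces-op} shows that $A(t)x=C_t^\omega x$ in $\omega$. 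Hence the assertion ``$C_t$ exists in $h_d$'', i.e. $C_t^\omega(h_d)\subseteq h_d$, is precisely the statement that the matrix $A(t)$ maps $h_d$ into $h_d$, and by Lemma \ref{L-1-Sez2} (equivalently, by the closed graph argument in Remark \ref{remark2A}, since $h_d$ is a BK-space) this already forces the induced linear map to belong to $\cL(h_d)$. Therefore $C_t\in\cL(h_d)$ if and only if $A(t)$ induces an operator in $\cL(h_d)$ in the sense of \cite[Section 3]{MRT}.

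Next I would apply \cite[Theorem 3.9]{MRT} with $X=Y=h_d$ and $A=A(t)$. By that theorem, $A(t)$ induces an operator in $\cL(h_d)$ exactly when (a) the $k$-th column $(a_{n,k}(t))_{n\in\N_0}$ of $A(t)$ lies in $c_0$ for every $k\in\N_0$, and (b) $\sup_{m\in\N_0}\bigl(\frac{1}{d_m}\sum_{n=0}^\infty d_n\bigl|\sum_{k=0}^m(a_{n,k}(t)-a_{n+1,k}(t))\bigr|\bigr)<\infty$. Condition (a) holds automatically: from \eqref{eq.12.1} the $k$-th column is $\bigl(0,\dots,0,\frac{1}{k+1},\frac{t}{k+2},\frac{t^2}{k+3},\dots\bigr)$, whose $n$-th entry is $0$ for $n<k$ and equals $\frac{t^{n-k}}{n+1}\le\frac{1}{n+1}$ for $n\ge k$ (using $0\le t\le 1$), hence tends to $0$. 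Consequently only condition (b) survives, and (b) is verbatim the requirement that the sequence displayed in \eqref{eq.12.2} be bounded, i.e. belong to $\ell^\infty$. Chaining the two equivalences yields the proposition.

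There is no genuine obstacle here beyond bookkeeping; the argument is a direct translation of the problem into the matrix-transformation language of \cite{MRT}. The only points meriting a line of care are the identification $A(t)x=C_t^\omega x$ on $h_d$ and the observation that the automatic continuity provided by Lemma \ref{L-1-Sez2} is what lets us invoke the $\cL(h_d)$-version of \cite[Theorem 3.9]{MRT} rather than merely a set-theoretic mapping statement; the verification that the columns of $A(t)$ belong to $c_0$ is immediate from \eqref{eq.12.1}.
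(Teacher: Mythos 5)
Your proposal is correct and follows essentially the same route as the paper: identify $A(t)$ with $C_t^\omega$ via \eqref{Ces-op}, observe that every column of $A(t)$ lies in $c_0$ (so that condition automatically holds for $t\in[0,1]$), and invoke \cite[Theorem 3.9]{MRT} so that only the boundedness condition \eqref{eq.12.2} remains. Your extra remarks on the automatic continuity supplied by Lemma \ref{L-1-Sez2} make explicit a point the paper leaves implicit, but they do not change the argument.
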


The requirement \eqref{eq.12.2} is rather involved and difficult to verify for general $d$. For a large class of sequences $d$ we now show that the factorization of $C_t$, akin to \eqref{eq.10.2} and \eqref{eq.11.4}, provides a useful approach to study certain spectral properties of $C_t$.

The restriction to $h_d$ of the diagonal operator $D_\varphi\in \cL(\omega)$, as given by \eqref{Dia-op}, is again denoted by $D_\varphi$. The following result occurs in \cite[Example 4.12]{MRT}.

\begin{lemma}\label{L.12.2}Let the sequence $d$ satisfy $1\leq d_n\uparrow \infty$. Then $D_\varphi\in \cL(h_d)$ is a compact operator.
\end{lemma}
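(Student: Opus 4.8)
The plan is to prove the lemma directly: I will show that $D_\varphi$ maps $h_d$ boundedly into itself and that it is the operator-norm limit of a sequence of finite rank operators on $h_d$, whence compactness. The one preliminary fact about $h_d$ I would record first is a pointwise estimate on coordinates: since every $x\in h_d$ lies in $c_0$, we have $x_k=-\sum_{j\ge k}(x_{j+1}-x_j)$, so $|x_k|\le\sum_{j\ge k}|x_{j+1}-x_j|$; and because $1\le d_k\le d_j$ for $k\le j$, this upgrades to $|x_k|\le d_k^{-1}\sum_{j\ge k}d_j|x_{j+1}-x_j|\le d_k^{-1}\|x\|_{h_d}$.

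For boundedness I would use the identity $(D_\varphi x)_{k+1}-(D_\varphi x)_k=\frac{x_{k+1}-x_k}{k+2}-\frac{x_k}{(k+1)(k+2)}$, which yields
\[
\|D_\varphi x\|_{h_d}\le\sum_{k\ge0}\frac{d_k}{k+2}\,|x_{k+1}-x_k|+\sum_{k\ge0}\frac{d_k}{(k+1)(k+2)}\,|x_k|.
\]
The first sum is at most $\tfrac12\|x\|_{h_d}$. The second sum is where care is needed: a termwise estimate is hopeless, since $\sum_k d_k/((k+1)(k+2))$ is typically divergent. Instead I would insert $|x_k|\le\sum_{j\ge k}|x_{j+1}-x_j|$, interchange the two summations, and note that $\sum_{0\le k\le j}1/((k+1)(k+2))$ telescopes to $1-\frac1{j+2}<1$ while $d_k\le d_j$ throughout the inner range; this bounds the second sum by $\|x\|_{h_d}$ as well. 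Together with $D_\varphi x\in c_0$ (clear since $x\in c_0$), this gives $D_\varphi(h_d)\subseteq h_d$ and $\|D_\varphi\|_{h_d\to h_d}\le\tfrac32$; alternatively, once $D_\varphi(h_d)\subseteq h_d$ is known one may invoke the closed graph argument of Remark~\ref{remark2A}, as $D_\varphi\in\cL(\omega)$ and $h_d\subseteq\omega$ continuously.

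For compactness I would set $D_\varphi^{(N)}x:=\sum_{n=0}^{N-1}\frac{x_n}{n+1}e_n$ for $N\ge1$; each $D_\varphi^{(N)}$ has finite rank and lies in $\cL(h_d)$ because the coordinate functionals are continuous on the BK-space $h_d$ and $\cE\subseteq h_d$. The sequence $(D_\varphi-D_\varphi^{(N)})x$ vanishes in coordinates $0,\dots,N-1$ and equals $x_n/(n+1)$ for $n\ge N$, so its $h_d$-norm is a ``boundary'' term $\frac{d_{N-1}|x_N|}{N+1}$ plus $\sum_{k\ge N}d_k\bigl|\frac{x_{k+1}}{k+2}-\frac{x_k}{k+1}\bigr|$. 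The preliminary estimate bounds the boundary term by $\frac{1}{N+1}\|x\|_{h_d}$, and the remaining sum is handled exactly as the two sums above but with all summation ranges starting from $N$, producing a bound $\bigl(\frac1{N+2}+\frac1{N+1}\bigr)\|x\|_{h_d}$. Hence $\|D_\varphi-D_\varphi^{(N)}\|_{h_d\to h_d}=O(1/N)\to0$, and since each $D_\varphi^{(N)}$ is finite rank, $D_\varphi\in\cL(h_d)$ is compact.

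The main obstacle is the estimate of the second sum above (and its tail analogue in the approximation step): the monotonicity $d_k\uparrow\infty$ that makes $h_d$ larger as $d$ grows is precisely what threatens boundedness, and it is only by summing in $k$ before summing in $j$ — so that the telescoping series $\sum_k 1/((k+1)(k+2))$ absorbs the growth of $d$ against the fixed index $j$ — that one recovers control. Everything else is routine.
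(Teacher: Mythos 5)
Your proof is correct. Note, however, that the paper does not prove this lemma at all: it simply cites \cite[Example 4.12]{MRT} for the statement, so there is no internal argument to compare against. Your direct proof is therefore a self-contained alternative to an external citation, and every step checks out: the identity $(D_\varphi x)_{k+1}-(D_\varphi x)_k=\frac{x_{k+1}-x_k}{k+2}-\frac{x_k}{(k+1)(k+2)}$ is algebraically correct; the pointwise bound $|x_k|\le d_k^{-1}\|x\|_{h_d}$ (using $x\in c_0$ and $d_k\le d_j$ for $j\ge k$) is exactly the right preliminary; and the crucial interchange of summation, which trades the divergent series $\sum_k d_k/((k+1)(k+2))$ for the telescoping bound $\sum_{k\le j}1/((k+1)(k+2))<1$ multiplied by $d_j$, is the genuine content of the argument. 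The tail estimates in the finite-rank approximation step, including the boundary term $d_{N-1}|x_N|/(N+1)\le\|x\|_{h_d}/(N+1)$, give $\|D_\varphi-D_\varphi^{(N)}\|_{h_d\to h_d}=O(1/N)$ as claimed. Your method is in the same spirit as the paper's own proof of Lemma \ref{L.11.1} for $bv_p$ (norm-approximation by finite-rank truncations after a change of variables), but applied directly in $h_d$ without any isomorphism onto an $\ell^p$ space; what the citation buys the paper is brevity, and what your argument buys is an explicit, elementary proof with the quantitative bound $\|D_\varphi\|_{h_d\to h_d}\le 3/2$.
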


Let $1\leq d_n\uparrow \infty$. The elements of $\cE\subseteq h_d$ satisfy
\begin{equation}\label{eq.12.3}
	\|e_0\|_{h_d}=d_0\ \mbox{ and }\  \|e_n\|_{h_d}=d_{n-1}+d_n,\ n\geq 1.
	\end{equation}
For the right-shift operator $S\in \cL(\omega)$, as given by \eqref{10.1}, note that
\begin{equation}\label{eq.12.4}
	S^mx=(0,\ldots, 0, x_0,x_1,\ldots),\quad m\in\N_0,
	\end{equation}
for each $x\in\omega$, where $0$ occurs $m$-times prior to $x_0$. Fix $m\in\N_0$. Then, for each $x\in h_d$, we have
\begin{align*}
	\|S^{m+1}x\|_{h_d}&=d_m|x_0|+d_{m+1}|x_1-x_0|+d_{m+2}|x_2-x_1|+\ldots\\
	&=d_m|x_0|+\frac{d_{m+1}}{d_0}d_0|x_1-x_0|+\frac{d_{m+2}}{d_1}d_1|x_2-x_1|+\ldots\\
	&\leq d_m|x_0|+\left(\sup_{k\in\N_0}\frac{d_{m+k+1}}{d_k}\right)\sum_{k=0}^\infty d_k|x_{k+1}-x_k|\\
	&=d_m|x_0|+\|x\|_{h_d}.\sup_{k\in\N_0}\frac{d_{m+k+1}}{d_k}.
	\end{align*}
Since $\sum_{k=0}^\infty|x_{k+1}-x_k|\leq \sum_{k=0}^\infty d_k|x_{k+1}-x_k|=\|x\|_{h_d}$, the series $\sum_{k=0}^\infty(x_{k+1}-x_k)$ is absolutely convergent in $\C$. Moreover, $x\in c_0$ and so the identities $(x_{n+1}-x_0)=\sum_{k=0}^n(x_{k+1}-x_k)$, for each $n\in\N_0$, imply that $\sum_{k=0}^\infty (x_{k+1}-x_k)=-x_0$. Hence,
\[
d_m|x_0|\leq d_m\sum_{k=0}^\infty|x_{k+1}-x_k|\leq d_m\sum_{k=0}^\infty d_k|x_{k+1}-x_k|=d_m\|x\|_{h_d},
\]
because $d_k\geq 1$ for all $k\in\N_0$. It follows that 
\begin{equation}\label{eq.12.5}
	\|S^{m+1}\|_{h_d\to h_d}\leq d_m+\sup_{k\in\N_0}\frac{d_{m+k+1}}{d_k},\quad m\in\N_0.
	\end{equation}

\begin{lemma}\label{L.12.3}Let the sequence $1\leq d_n\uparrow\infty$ have the property that each sequence
	\begin{equation}\label{eq.12.6}
		\left(\frac{d_{m+k+1}}{d_k}\right)_{k\in\N_0},\quad m\in\N_0,
		\end{equation}
is decreasing.	Then 
	\begin{equation}\label{eq.12.7}
		\|S^{m+1}\|_{h_d\to h_d}\leq d_m+\frac{d_{m+1}}{d_0},\quad m\in\N_0.
	\end{equation}
\end{lemma}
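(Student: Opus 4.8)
The plan is to combine the norm estimate already derived immediately before the lemma with the monotonicity hypothesis; no further work is needed. Recall from \eqref{eq.12.5} that
\[
\|S^{m+1}\|_{h_d\to h_d}\leq d_m+\sup_{k\in\N_0}\frac{d_{m+k+1}}{d_k},\quad m\in\N_0.
\]
So the only thing to do is to evaluate the supremum on the right-hand side under the extra assumption on $d$.

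Fix $m\in\N_0$. By hypothesis the sequence $\left(\frac{d_{m+k+1}}{d_k}\right)_{k\in\N_0}$ is decreasing, hence its supremum over $k\in\N_0$ is attained at the index $k=0$ and equals $\frac{d_{m+1}}{d_0}$. Substituting this value into the displayed inequality gives exactly \eqref{eq.12.7}, and since $m\in\N_0$ was arbitrary the claim follows.

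There is essentially no obstacle: the argument is just "a decreasing sequence attains its supremum at the first term." The only point worth a sanity check is that one really may appeal to \eqref{eq.12.5}, which holds for \emph{all} sequences with $1\leq d_n\uparrow\infty$ (it was established there without any monotonicity assumption on the quotients), so specializing it here is legitimate. One may also remark in passing that, as $d_0\geq 1$, the bound \eqref{eq.12.7} is dominated by $d_m+d_{m+1}$, which is the shape in which it will be used to estimate the partial sums of the series $R_t=\sum_{n\geq 0}t^nS^n$ in $\cL(h_d)$.
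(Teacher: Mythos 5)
Your proof is correct and is essentially identical to the paper's: both simply observe that the decreasing hypothesis forces $\sup_{k\in\N_0}\frac{d_{m+k+1}}{d_k}=\frac{d_{m+1}}{d_0}$ and then substitute into \eqref{eq.12.5}. Nothing is missing.
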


\begin{proof} Fix $m\in\N_0$. Since the sequence \eqref{eq.12.6} is decreasing we have $\sup_{k\in\N_0}\frac{d_{m+k+1}}{d_k}=\frac{d_{m+1}}{d_0}$. Then \eqref{eq.12.7} follows from \eqref{eq.12.5}.
	\end{proof}

The following result provides a sufficient condition  to ensure the convergence of the series $\sum_{m=0}^\infty t^mS^m$ in $\cL_b(h_d)$ for each $t\in [0,1)$ and  a large class of sequences $d$.

\begin{prop}\label{P.12.4} Let  $1\leq d_n\uparrow\infty$ have the property that the sequence \eqref{eq.12.6} is decreasing for each $m\in\N_0$ and that $\frac{d_{k+1}}{d_k}\downarrow 1$. Then  the series $\sum_{m=0}^\infty t^mS^m$ is absolutely convergent in $\cL_b(h_d)$ for every $t\in [0,1)$.
	\end{prop}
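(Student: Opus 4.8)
The plan is to reduce everything to the operator-norm estimate of Lemma~\ref{L.12.3} together with an elementary sub-exponential growth bound on $(d_n)$ extracted from the hypothesis $\frac{d_{k+1}}{d_k}\downarrow 1$. Since $h_d$ is a Banach space, $\cL_b(h_d)$ carries the operator norm, so absolute convergence of $\sum_{m=0}^\infty t^mS^m$ in $\cL_b(h_d)$ means exactly that $\sum_{m=0}^\infty t^m\|S^m\|_{h_d\to h_d}<\infty$. For $t=0$ this is immediate (the series is just $S^0=I$), so I would fix $t\in(0,1)$ from now on. Because the standing hypothesis here is precisely that each sequence \eqref{eq.12.6} is decreasing, Lemma~\ref{L.12.3} is available and gives $\|S^{m+1}\|_{h_d\to h_d}\le d_m+\frac{d_{m+1}}{d_0}$ for all $m\in\N_0$; combined with $\|S^0\|_{h_d\to h_d}=1$, this bounds every term of the target series by an expression in the $d_n$'s.

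The key step is to show that $\frac{d_{k+1}}{d_k}\downarrow 1$ forces $(d_n)$ to grow slower than every exponential. Given $\lambda>1$, the convergence of the ratios to $1$ yields $K\in\N_0$ with $\frac{d_{k+1}}{d_k}\le\lambda$ for all $k\ge K$; telescoping, $d_m=d_K\prod_{k=K}^{m-1}\frac{d_{k+1}}{d_k}\le d_K\lambda^{m-K}$ for $m>K$, while $d_m\le d_K$ for $m\le K$ since $(d_n)$ is increasing. As $\lambda^m\ge 1$ and $\lambda^{-K}\le 1$, both cases are subsumed in $d_m\le C_\lambda\lambda^m$ for every $m\in\N_0$, with $C_\lambda:=d_K$.

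Finally I would choose $\lambda\in(1,1/t)$, so that $t\lambda<1$, and combine the two ingredients. Reindexing by $m=j+1$,
\[
\sum_{m=0}^\infty t^m\|S^m\|_{h_d\to h_d}
=1+t\sum_{j=0}^\infty t^j\|S^{j+1}\|_{h_d\to h_d}
\le 1+t\sum_{j=0}^\infty t^j\Bigl(d_j+\tfrac{d_{j+1}}{d_0}\Bigr)
\le 1+tC_\lambda\Bigl(1+\tfrac{\lambda}{d_0}\Bigr)\sum_{j=0}^\infty(t\lambda)^j<\infty,
\]
the geometric series being summable because $t\lambda<1$. Hence $\sum_{m=0}^\infty t^mS^m$ is absolutely convergent in $\cL_b(h_d)$, as claimed. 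The only mildly non-routine point is the growth bound $d_m=O(\lambda^m)$, and even that is a one-line telescoping argument; the hypothesis on \eqref{eq.12.6} enters solely through Lemma~\ref{L.12.3}, and no further structure of $h_d$ is needed.
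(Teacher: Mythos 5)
Your proof is correct and follows essentially the same route as the paper: both reduce the claim via Lemma \ref{L.12.3} to the convergence of $\sum_m d_m t^m$, which the paper settles by the ratio test using $d_{m+1}/d_m\to 1$, while you unroll that same ratio argument into an explicit sub-exponential bound $d_m\le C_\lambda\lambda^m$ and a geometric comparison. The two are equivalent in substance, so there is nothing to add.
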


\begin{proof} Note that $d_m+\frac{d_{m+1}}{d_0}\leq (1+\frac{1}{d_0})d_{m+1}\leq 2d_{m+1}$, for $m\in\N_0$. So, \eqref{eq.12.7} yields 
	\[
	\sum_{m=0}^\infty t^m\|S^m\|_{h_d\to h_d}\leq 1+2\sum_{m=1}^\infty d_mt^m<\infty,
	\]
	via the ratio test as $(d_{m+1}t^{m+1})/(d_mt^m)=t(d_{m+1}/d_m)\to t$ for $m\to\infty$. So, the series  $\sum_{m=0}^\infty t^mS^m$ converges absolutely  in $\cL(h_d)$ for the operator norm.
	\end{proof}

\begin{theorem}\label{T.12.5} Let  $1\leq d_n\uparrow\infty$ have the property that the sequence \eqref{eq.12.6} is decreasing for each $m\in\N_0$ and that $\frac{d_{k+1}}{d_k}\downarrow 1$. Then, for every $t\in [0,1)$, the operator $C_t\in \cL(h_d)$ is compact with spectra
	\begin{equation}\label{eq.12.8}
	\sigma_{pt}(C_t;h_d)=\Lambda\ \mbox{ and }\ \sigma(C_t;h_d)=\Lambda_0.
	\end{equation}
Furthemore, $C_t$ is power bounded and uniformly ergodic in $h_d$, but not supercyclic.
\end{theorem}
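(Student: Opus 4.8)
The plan is to follow the scheme already used in the excerpt for $\ell^p$, $ces_p$ and $bv_p$: factor $C_t$ through the diagonal operator $D_\varphi$, read off compactness and invariance from this factorization, identify the spectrum via the explicit eigenvectors $x_t^{[m]}$, and then invoke Theorem \ref{T-general} and Lemma \ref{L-nuovo}. Fix $t\in[0,1)$. First I would use Proposition \ref{P.12.4} to conclude that the series $R_t=\sum_{n=0}^\infty t^nS^n$ of \eqref{eq.10.2} converges absolutely in $\cL_b(h_d)$, so $R_t\in\cL(h_d)$. Combined with the identity $D_\varphi R_t=C_t$ in $\cL(\omega)$ from \cite[Lemma 3.2]{CR4}, this gives $C_t^\omega(h_d)=D_\varphi R_t(h_d)\subseteq h_d$, hence $C_t\in\cL(h_d)$ by Lemma \ref{L-1-Sez2}, with $C_t=D_\varphi R_t$ holding in $\cL(h_d)$; since $D_\varphi\in\cL(h_d)$ is compact by Lemma \ref{L.12.2}, the ideal property of compact operators makes $C_t\in\cL(h_d)$ compact.

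Next I would pin down the spectra with Lemma \ref{L-2Sez2}. The point is to show that every eigenvector $x_t^{[m]}$ of \eqref{eq.EigenvalueC} lies in $h_d$, for all $m\in\N_0$. Its coordinates vanish below index $m$, equal $1$ at index $m$, and equal $\binom{m+n}{n}t^n$ at index $m+n$, so for $n\ge 1$ the successive differences are bounded by $c_m n^m t^n$ with $c_m$ depending only on $m$ (the two remaining terms of $\|x_t^{[m]}\|_{h_d}$, involving $d_{m-1}$ and $d_m$, being harmless). Because $\frac{d_{k+1}}{d_k}\downarrow 1$, for any fixed $s\in(1,1/t)$ there is $C_s>0$ with $d_k\le C_s s^k$ for all $k$, and then $\sum_n d_{m+n}c_m n^m t^n$ is dominated by $c_m C_s s^m\sum_n n^m(st)^n<\infty$ since $st<1$; hence $\|x_t^{[m]}\|_{h_d}<\infty$. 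With all $x_t^{[m]}\in h_d$, Lemma \ref{L-2Sez2}(i) gives $\sigma_{pt}(C_t;h_d)=\Lambda$, and since $C_t$ is compact and this set is infinite, Lemma \ref{L-2Sez2}(ii)(b) gives $\sigma(C_t;h_d)=\Lambda_0$, which is \eqref{eq.12.8}.

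Finally, the case $m=0$ of this estimate shows $x_t^{[0]}=(t^n)_{n\in\N_0}\in h_d$, so conditions (i) and (ii) of Theorem \ref{T-general} hold for $X=h_d$; that theorem then yields power boundedness and uniform mean ergodicity of $C_t\in\cL(h_d)$, together with $\|(C_t)_{[n]}-P\|_{h_d\to h_d}\to 0$, where $P$ is the projection onto ${\rm span}\{x_t^{[0]}\}$ along $(I-C_t)(h_d)$. Since $c_{00}\subseteq h_d$ and the inclusion $h_d\subseteq\omega$ is continuous, Lemma \ref{L-nuovo} shows that $C_t$ is not supercyclic in $h_d$. The one genuinely substantive step — everything else being bookkeeping with results already at hand — is the estimate $\|x_t^{[m]}\|_{h_d}<\infty$: controlling the weighted $\ell^1$-norm of the differences of $x_t^{[m]}$ against the weights $d_k$ is exactly where the subgeometric growth of $d$ forced by $\frac{d_{k+1}}{d_k}\downarrow 1$ is used, while the decreasing-ratio hypothesis on the sequences \eqref{eq.12.6} enters only indirectly, through Proposition \ref{P.12.4}.
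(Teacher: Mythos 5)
Your proposal is correct and follows essentially the same route as the paper: factor $C_t=D_\varphi R_t$ with $R_t$ convergent by Proposition \ref{P.12.4} and $D_\varphi$ compact by Lemma \ref{L.12.2}, verify $x_t^{[m]}\in h_d$ for all $m$ to get the spectra from Lemma \ref{L-2Sez2}, and conclude via Theorem \ref{T-general} and Lemma \ref{L-nuovo}. The only (immaterial) difference is in the membership estimate: the paper applies the ratio test to the weighted differences $\beta_n=d_{m+n}|(x_t^{[m]})_{m+n+1}-(x_t^{[m]})_{m+n}|$, showing $\beta_{n+1}/\beta_n\to t<1$, whereas you dominate $d_k\le C_s s^k$ with $st<1$ and sum directly — two equivalent ways of exploiting $\frac{d_{k+1}}{d_k}\downarrow 1$.
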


\begin{proof}
Fix $t\in [0,1)$. Proposition \ref{P.12.4} implies that $R_t:=\sum_{m=0}^\infty t^mS^m\in \cL(h_d)$ with the series absolutely convergent in $\cL_b(h_d)$. So, Lemma \ref{L.12.2} implies that 
\begin{equation}\label{eq.12.9}
	D_\varphi R_t=\sum_{m=0}^\infty t^m D_\varphi S^m\in \cL(h_d)
	\end{equation}
is a compact operator. Since $D_\varphi R_t=C_t$ holds in $\cL(\omega)$, \cite[Lemma 3.2]{CR4}, we see that $C^\omega_t(h_d)\subseteq h_d$ and hence, $C_t\in \cL(h_d)$ with $C_t$ compact via \eqref{eq.12.9}.

To establish \eqref{eq.12.8} it suffices to show that $\{x_t^{[m]}\, :\, m\in\N_0\}\subseteq h_d$; see Lemma \ref{L-2Sez2}. So, fix $m\in\N_0$. It follows from \eqref{eq.EigenvalueC} that
\begin{equation}\label{eq.12.10}
	(x_t^{[m]})_{m+n+1}=\frac{t(m+n+1)}{(n+1)}(x_t^{[m]})_{m+n},\quad n\geq 1,
	\end{equation}
and hence, that
\[
(x_t^{[m]})_{m+n+1}-(x_t^{[m]})_{m+n}=\frac{(mt+(n+1)(t-1))}{(n+1)}(x_t^{[m]})_{m+n},\quad n\geq 1.
\]
These identities imply that
\[
\beta_n:=d_{m+n}|(x_t^{[m]})_{m+n+1}-(x_t^{[m]})_{m+n}|=d_{m+n}\left|\frac{(mt+(n+1)(t-1))}{(n+1)}\right|(x_t^{[m]})_{m+n},
\]
for $n\geq 1$. Forming the ratios $\beta_{n+1}/\beta_n$ and using \eqref{eq.12.10} leads to
\begin{align*}
\frac{\beta_{n+1}}{\beta_n}&=\frac{d_{m+n+1}(n+1)}{d_{m+n}(n+2)}\left|\frac{mt+(n+2)(t-1)}{mt+(n+1)(t-1)}\right|\frac{(x_t^{[m]})_{m+n+1}}{(x_t^{[m]})_{m+n}}\\
&=\frac{d_{m+n+1}}{d_{m+n}}\frac{t(m+n+1)}{(n+2)}\left|\frac{mt+(n+2)(t-1)}{mt+(n+1)(t-1)}\right|,
\end{align*}
for each $n\geq 1$. Noting (with respect to $n$) that $\frac{d_{m+n+1}}{d_{m+n}}\downarrow 1$ implies  $\lim_{n\to\infty}\beta_{n+1}/\beta_n=t\in [0,1)$, the ratio test implies that  
\[
\sum_{n=1}^\infty d_{m+n}|(x_t^{[m]})_{m+n+1}-(x_t^{[m]})_{m+n}|<\infty
\]
which, in turn, implies that
\[
\|x_t^{[m]}\|_{h_d}=\sum_{k=0}^m d_k|(x_t^{[m]})_{k+1}-(x_t^{[m]})_{k}|+\sum_{n=1}^\infty d_{m+n}|(x_t^{[m]})_{m+n+1}-(x_t^{[m]})_{m+n}|<\infty.
\]
Since $m\in\N_0$ is arbitrary, we can conclude that \eqref{eq.12.8}  is valid.

According to Theorem \ref{T-general} the compact operator $C_t\in \cL(h_d)$ is power bounded and uniformly mean ergodic. Lemma \ref{L-nuovo} implies that $C_t$ is not supercyclic in $h_d$.\end{proof}

Let us consider some examples.

\begin{example}\label{Ex.12.6} \rm (i) Fix $r>0$ and let $d:=((n+1)^r)_{n\in\N_0}$, in which case $1\leq d_n\uparrow\infty$. Given $m\in\N_0$ the sequence \eqref{eq.12.6} is given by 
	\[
	\frac{d_{m+k+1}}{d_k}=\left(\frac{m+k+2}{k+1}\right)^r=\left(1+\left(\frac{m+1}{k+1}\right)\right)^r, \quad k\in\N_0, 
	\]
	which is decreasing. Moreover, for $m=0$, we see that $\frac{d_{k+1}}{d_k}=\left(1+\left(\frac{1}{k+1}\right)\right)^r\downarrow 1$. So, Theorem \ref{T.12.5} applies to $d$. For $r=1$, observe that $d=((n+1))_{n\in\N_0}$ and so $h_d$ is the classical  Hahn space $d$.
	
	Since the series in \eqref{eq.Nuova}, namely $\sum_{k=0}^\infty\frac{d_k}{(k+1)(k+2)}=\sum_{k=0}^\infty\frac{(k+1)^{r-1}}{(k+2)}$ is divergent, it follows that $C_1^\omega e_0\not\in h_d$. Accordingly, $C_1$ does not exist in $h_d$.
	
	(ii) Let $d:=(\log (n+3))_{n\in\N_0}$, in which case $1\leq d_n\uparrow\infty$. Fix $m\in\N_0$. Then  the sequence \eqref{eq.12.6} is given by 
	\[
	\frac{d_{m+k+1}}{d_k}=\frac{\log(m+k+4)}{\log(k+3)},\quad k\in\N_0.
	\]
	Define the function $f_m\colon (-1/2,\infty)$ by 
	\[
	f_m(x):=\frac{\log(x+m+4)}{\log(x+3)},\quad x>-\frac{1}{2},
	\]
	whose derivative is given by
	\begin{equation}\label{eq.12.11}
		f'_m(x)=\frac{(x+3)\log (x+3)-(x+m+4)\log (x+m+4)}{(x+3)(x+m+4)(\log (x+3))^2},\quad x>-\frac{1}{2}.
		\end{equation}
	For $s>1$ we observe that $(s\log (s))'=1+\log (s)>0$ and so $s\mapsto s\log (s)$ is an increasing function on $(0,\infty)$. This implies, via \eqref{eq.12.11}, that $f'_m(x)<0$ on $(-1/2,\infty)$ and hence, that $f_m$ is a decreasing function on $(-1/2,\infty)$. In particular, the sequence $\frac{d_{m+k+1}}{d_k}=f_m(k)$, for $k\in\N_0$, is decreasing. In addition, for $m=0$, it is clear that $f_0(k)=\frac{d_{k+1}}{d_k}=\frac{\log(k+4)}{\log(k+3)}\downarrow 1$. So, Theorem \ref{T.12.5} applies to $d$.
	\end{example}

	In contrast to Example \ref{Ex.12.6}(i), for $t=1$ and $d$ as in Example \ref{Ex.12.6}(ii) we have the following result. Recall that a  complex number $\lambda$ belongs to the \textit{residual spectrum} $\sigma_r(T;X)$ of a Banach space operator $T$ precisely when $\Ker(\lambda I-T)=\{0\}$ and ${\rm Im}(\lambda I-T)$ is not dense in $X$. 
	
	\begin{prop}\label{P.Nuova_12} Let $d=(\log (n+3))_{n\in\N_0}$, in which case $1\leq d_n\uparrow \infty$.
		\begin{itemize}
			\item[\rm (i)] The Cesàro operator $C_1$ given in \eqref{Ces-1} exists in $h_d$, that is, $C_1\in \cL(h_d)$.
			\item[\rm (ii)] The point spectrum $\sigma_{pt}(C_1;h_d)=\emptyset$.
			\item[\rm (iii)] The inclusions
			\begin{equation}\label{eq.In.12}
				\left\{z\in\C\,:\,\left|z-\frac{1}{2}\right|<\frac{1}{2}\right\}\cup\{1\}\subseteq \sigma_{pt}(C_1';h_d')\subseteq \sigma_r(C_1;h_d)
				\end{equation}
			are valid. In particular, 
			$$
			\left\{z\in\C\,:\,\left|z-\frac{1}{2}\right|\leq \frac{1}{2}\right\}\subseteq \sigma(C_1;h_d).
			$$
			\item[\rm (iv)] The operator $C_1\in \cL(h_d)$ is not compact, not mean ergodic and not supercyclic.
		\end{itemize}
	\end{prop}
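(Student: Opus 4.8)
The plan is to establish the four parts in order; (iii) carries most of the work. For (i) I would apply Proposition \ref{P.12.1} with $d=(\log(n+3))_{n\in\N_0}$ and $t=1$, reducing the claim to the boundedness of the sequence in \eqref{eq.12.2}. From \eqref{eq.12.1} one has $\sum_{k=0}^m a_{n,k}(1)=(\min(m,n)+1)/(n+1)$, so a short telescoping computation gives that $\sum_{k=0}^m(a_{n,k}(1)-a_{n+1,k}(1))$ equals $(m+1)/((n+1)(n+2))$ for $n\ge m$ and $0$ for $n<m$ (all terms nonnegative). Hence the $m$-th entry of \eqref{eq.12.2} is
\[
\frac{m+1}{\log(m+3)}\sum_{n=m}^\infty\frac{\log(n+3)}{(n+1)(n+2)}.
\]
Since $x\mapsto\log(x+3)/((x+1)(x+2))$ is eventually decreasing, comparing the tail sum with $\int_m^\infty\log(x+3)\,(x+1)^{-2}\,dx$ --- which an integration by parts shows is $O(\log(m+3)/(m+1))$ --- gives $\sum_{n=m}^\infty\log(n+3)/((n+1)(n+2))\le C\,\log(m+3)/(m+1)$ for an absolute constant $C$; so the $m$-th entry is bounded in $m$, \eqref{eq.12.2} holds, and $C_1\in\cL(h_d)$. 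Part (ii) is then immediate: $h_d\subseteq c_0$ with $C_1$ leaving both spaces invariant, so every eigenvector of $C_1$ in $h_d$ is one in $c_0$, whence $\sigma_{pt}(C_1;h_d)\subseteq\sigma_{pt}(C_1;c_0)=\emptyset$ by Proposition \ref{P.5.1}.

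For (iii) I would first make $C_1'$ explicit. Using $h_d'\cong bs_d$ with the natural duality $\langle x,u\rangle=\sum_k x_ku_k$ (convergent by Abel summation, since $x\in c_0$ with $\sum_k d_k|x_{k+1}-x_k|<\infty$ forces $d_n|x_n|\to0$, while $|\sum_{k\le n}u_k|=O(d_n)$), evaluation at $x=e_j$ gives $(C_1'u)_j=\langle C_1e_j,u\rangle=\sum_{n\ge j}u_n/(n+1)$; that is, $C_1'$ is the transpose of the matrix \eqref{eq.12.1} at $t=1$. So $C_1'u=\lambda u$ means $\sum_{n\ge j}u_n/(n+1)=\lambda u_j$ for every $j$; subtracting the equation at $j+1$ from that at $j$ yields, for $\lambda\ne0$, the recursion $u_{j+1}=u_j(1-\tfrac1{\lambda(j+1)})$, hence $u_j=\prod_{i=1}^j(1-\tfrac1{\lambda i})$ with $u_0=1$. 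From $\log u_j=-\lambda^{-1}\sum_{i\le j}i^{-1}+O(1)$ one gets $|u_j|\asymp j^{-\RR(1/\lambda)}$, and $\RR(1/\lambda)>1$ exactly when $|\lambda-\tfrac12|<\tfrac12$; for such $\lambda$ the series $\sum_j|u_j|$ converges, so $(\sum_{k\le n}u_k)_n$ is bounded, $u\in bs\subseteq bs_d$ is nonzero, and the telescoping identity $\sum_{n=j}^N u_n/(n+1)=\lambda(u_j-u_{N+1})\to\lambda u_j$ confirms the eigenequation. For $\lambda=1$ the product collapses to $u=e_0\in bs_d$. Thus $\{z:|z-\tfrac12|<\tfrac12\}\cup\{1\}\subseteq\sigma_{pt}(C_1';h_d')$. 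The second inclusion in \eqref{eq.In.12} is the standard fact that an eigenvector $0\ne\varphi\in h_d'$ of $C_1'$ at $\lambda$ annihilates ${\rm Im}(\lambda I-C_1)$, so that range is not dense, while $\Ker(\lambda I-C_1)=\{0\}$ by (ii); hence $\lambda\in\sigma_r(C_1;h_d)$. Since $\sigma_r\subseteq\sigma$ and $\sigma(C_1;h_d)$ is closed, the closed disk $\{z:|z-\tfrac12|\le\tfrac12\}$ lies in $\sigma(C_1;h_d)$.

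Finally, for (iv): $\sigma(C_1;h_d)$ is uncountable by (iii), so $C_1$ is not compact; since $c_{00}\subseteq h_d$ and $h_d\subseteq\omega$ continuously, Lemma \ref{L-nuovo} gives that $C_1$ is not supercyclic; and if $C_1$ were mean ergodic, then $\tfrac1nC_1^n\to0$ in $\cL_s(h_d)$ by \eqref{eq.Tende0}, which together with mean ergodicity forces the decomposition $h_d=\Ker(I-C_1)\oplus\ov{{\rm Im}(I-C_1)}$ --- contradicting $\Ker(I-C_1)=\{0\}$ (from (ii)) and $\ov{{\rm Im}(I-C_1)}\ne h_d$ (from (iii)). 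I expect the main obstacle to be part (iii): identifying $C_1'$ acting on $bs_d$ and determining precisely which $\lambda$ make the product eigenvector $u$ lie in $bs_d$; the tail-sum estimate in (i) is the other computation needing care.
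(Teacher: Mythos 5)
Your proposal is correct, and its overall architecture coincides with the paper's: reduce (i) to the Malkowsky--Rako\v{c}evi\'c--Tu\v{g} matrix criterion of Proposition \ref{P.12.1} and estimate the tail sum $\sum_{n\ge m}\log(n+3)/((n+1)(n+2))$; obtain (iii) by showing that the dual operator on $h_d'=bs_d$ is given by \eqref{eq.dual 1} and that the relevant eigenvectors lie in $\ell^1\subseteq bs_d$; and deduce (iv) from (ii), (iii) exactly as you do (uncountable spectrum rules out compactness, Lemma \ref{L-nuovo} rules out supercyclicity, and the mean ergodic decomposition $h_d=\Ker(I-C_1)\oplus\ov{{\rm Im}(I-C_1)}$ contradicts $1\in\sigma_r(C_1;h_d)$). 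The differences are local and both legitimate. In (i) you close the integral estimate by integration by parts where the paper compares $F(y)=\int_{y-1}^\infty g$ with $G(y)=\log(y+3)/(y+1)$ via l'H\^opital; same outcome. In (ii) you shortcut through $h_d\subseteq c_0$ and $\sigma_{pt}(C_1;c_0)=\emptyset$ (Proposition \ref{P.5.1} together with Proposition \ref{P.SimpleFact1}(i)), whereas the paper checks directly that the eigenvectors $x_1^{[m]}$ of $C_1^\omega$ fail to be in $c_0$; your route is shorter but leans on the cited $c_0$ result, the paper's is self-contained. In (iii) you re-derive the point spectrum of $C_1'$ by solving the recursion $u_{j+1}=u_j(1-\tfrac{1}{\lambda(j+1)})$ and estimating $|u_j|\asymp j^{-\RR(1/\lambda)}$, where the paper simply quotes Akhmedov--Ba\c{s}ar for $\sigma_{pt}(C_1';\ell^1)$ and then uses $\ell^1\subseteq h_d'$; your version is more elementary and makes explicit why the open disc appears, at the cost of having to verify (as you do via the telescoping identity) that the formal solution genuinely satisfies the eigenequation. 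All the analytic details you flag as delicate (the identification of $C_1'$ through the AK-property of $h_d$, the convergence of the pairing via $d_n|x_n|\to 0$, the equivalence $\RR(1/\lambda)>1\Leftrightarrow|\lambda-\tfrac12|<\tfrac12$) check out.
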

	
	\begin{proof} (i)
	Consider the matrix $A(1)$ given by \eqref{eq.12.1} when $t=1$, in  which case the condition \eqref{eq.12.2} reduces to verifying that
	\begin{equation}\label{eq.es.1}
		\left(\frac{1}{d_m}\sum_{n=0}^\infty d_n\left|\sum_{k=0}^m(a_{n,k}(1)-a_{n+1,k}(1))\right|\right)_{m\in\N_0}\in\ell^\infty.
		\end{equation}
	Given $m,n\in\N_0$ direct calculation yields 
	\begin{equation*}
	\sum_{k=0}^m(a_{n,k}(1)-a_{n+1,k}(1))=\left\{\begin{array}{ll}\sum_{k=0}^m(\frac{1}{n+1}-\frac{1}{n+2})=\frac{(m+1)}{(n+1)(n+2)},& n\geq m\\
\left(\sum_{k=0}^m(\frac{1}{n+1}-\frac{1}{n+2})\right)-\frac{1}{n+2}=0, &0\leq n<m.	\end{array}\right.
	\end{equation*}
So, the coordinates of the series in \eqref{eq.es.1} are given, for each $m\in\N_0$, by
\begin{equation}\label{eq.es.2}
	\frac{1}{d_m}\sum_{n=0}^\infty d_n\left|\sum_{k=0}^m(a_{n,k}(1)-a_{n+1,k}(1))\right|=\frac{(m+1)}{\log (m+3)}\sum_{n=m}^\infty\frac{\log (n+3)}{(n+1)(n+2)}.
	\end{equation}
The function $g(x):=\frac{\log (x+3)}{(x+1)(x+2)}$, for $x\in [0,\infty)$, is positive, decreasing and integrable. By the integral test for series we can conclude that
\begin{equation}\label{eq.Integral Test}
\sum_{n=m}^\infty\frac{\log (n+3)}{(n+1)(n+2)}\leq \int_{m-1}^\infty g(x)\, dx,\quad m\geq 1.
\end{equation}
Define positive functions on $[1,\infty)$ by $F(y):=\int_{y-1}^\infty g(x)\, dx$ and $G(y):=\frac{\log (y+3)}{y+1}$, in which case $\lim_{y\to\infty}F(y)=0=\lim_{y\to\infty}G(y)$. The claim is that $\lim_{y\to\infty}\frac{F(y)}{G(y)}=1$, for which it suffices to show (via l'H\^{o}pital's rule) that $\lim_{y\to\infty}\frac{F'(y)}{G'(y)}=1$. Since $F'(y)=-g(y-1)=-\frac{\log (y+2)}{y(y+1)}$ and
\[
G'(y)=\frac{(y+1)-(y+3)\log (y+3)}{(y+3)(y+1)^2}\simeq -\frac{\log (y+3)}{(y+1)^2}, \ \mbox{for $y$ large},
\]
it follows that
\[
\frac{F'(y)}{G'(y)}\simeq -\frac{\log (y+2)}{y(y+1)}\cdot\left(-\frac{(y+1)^2}{\log (y+3)}\right)=\frac{(y+1)\log (y+2)}{y\log (y+3)}, \ \mbox{for $y$ large}.
\]
Accordingly, $\lim_{y\to\infty}\frac{F'(y)}{G'(y)}=1$ and hence, also
\[
\lim_{y\to\infty}\frac{F(y)}{G(y)}=\lim_{y\to\infty} \frac{(y+1)}{\log (y+3)}\int_{y-1}^\infty \frac{\log (x+3)}{(x+1)(x+2)}\,dx=1.
\]
It follows from \eqref{eq.es.2} and \eqref{eq.Integral Test} that the series in \eqref{eq.es.1} does indeed belong to $\ell^\infty$ which, via Proposition \ref{P.12.1}, implies that $C_1\in\cL(h_d)$.

(ii) It is known that $C_1^\omega$ (see \eqref{Ces-1}) belongs to $\cL(\omega)$, \cite[p.289]{ABRN}. Moreover, Proposition 4.4 (see also its proof) in \cite{ABRN} shows that $\sigma_{pt}(C_1^\omega;\omega)=\Lambda$ and that the $1$-dimensional eigenspace corresponding to the eigenvalue $\frac{1}{m+1}$, for each $m\in\N_0$, is spanned by
\begin{equation}\label{eq.Autovettore.1}
	x_1^{[m]}:=\left(0,0,\ldots, 0,1,\frac{(m+1)!}{m!1!},\frac{(m+2)!}{m!2!},\frac{(m+3)!}{m!3!},\ldots\right)\in\omega,
	\end{equation}
where $1$ is in position $m$. Clearly, $x_1^{[0]}=\mathbbm{1}\not\in c_0$ and so $x_1^{[0]}\not\in h_d$. Fix $m\geq 1$. Then 
\[
\frac{(m+n)!}{m!n!}\geq \frac{(1+n)!}{m!n!}=\frac{(1+n)}{m!}\geq \frac{2}{m!},\quad n\geq 1.
\] 
So, \eqref{eq.Autovettore.1} implies that $x_1^{[m]}\not\in c_0$, that is, $x_1^{[m]}\not\in h_d$ for all $m\in\N_0$.

The proof of Lemma \ref{L-2Sez2} also applies for $t=1$ and so we can conclude from Lemma \ref{L-2Sez2}(i) with $X:=h_d$ (as $C_1\in \cL(h_d)$) that $\sigma_{pt}(C_1;h_d)=\emptyset$.

(iii) It was noted in Section 5 that $C_1\in \cL(c_0)$ satisfies
 $$
\sigma(C_1;c_0)=\left\{z\in \C\,:\, \left|z-\frac{1}{2}\right|\leq \frac{1}{2}\right\}.
$$
The dual operator $C_1'\in\cL(\ell^1)$ is given by
\begin{equation}\label{eq.dual 1}
C_1'z=\left(\sum_{k=n}^\infty\frac{z_k}{k+1}\right)_{n\in\N_0},\quad z=(z_n)_{n\in\N_0}\in\ell^1=c_0',
\end{equation}
\cite{Le}, \cite{Re}, and satisfies
\begin{equation}\label{eq.spettro p dual}
 \sigma_{pt}(C'_1;\ell^1)=	\left\{z\in \C\,:\, \left|z-\frac{1}{2}\right|< \frac{1}{2}\right\}\cup\{1\},
	\end{equation}
\cite[Theorem 2.4]{Ak}. It was noted earlier  that $\ell^1\subseteq h_d'=bs_d$ for the natural inclusion. Since $\cE$ is a basis for both $\ell^1$ and $h_d$, it follows that the dual operator $C_1'\in \cL(h'_d)$ of $C_1\in \cL(h_d)$ is also given by \eqref{eq.dual 1}, for each $z\in h'_d$. Accordingly, $\sigma_{pt}(C_1';\ell^1)\subseteq \sigma_{pt}(C'_1;h_d')$ and so, via \eqref{eq.spettro p dual}, we have
\[
\left\{z\in \C\,:\, \left|z-\frac{1}{2}\right|< \frac{1}{2}\right\}\cup\{1\}\subseteq \sigma_{pt}(C_1';h_d').
\]
Recalling that
\[
\sigma_{pt}(C_1';h_d')\subseteq \sigma_{pt}(C_1;h_d)\cup \sigma_r(C_1;h_d),
\]
\cite[Proposition I.1.14]{Dow}, and $\sigma_{pt}(C_1;h_d)=\emptyset$, we can readily deduce 
\eqref{eq.In.12}.

Since $\sigma_r(C_1;h_d)\subseteq \sigma(C_1;h_d)$ and $\sigma(C_1;h_d)$ is a closed set, the proof is complete.

(iv) It is clear from \eqref{eq.In.12} that  $\sigma(C_1;h_d)$ is an uncountable set. This implies that $C_1\in \cL(h_d)$ cannot be compact, \cite[Theorem II.2.21(i)]{Dow}. 

Lemma \ref{L-nuovo} implies that $C_1\in \cL(h_d)$ is not supercyclic.

Assume that $C_1\in \cL(h_d)$ is mean ergodic. Then
\[
h_d=\Ker(I-C_1)\oplus \overline{{\rm Im}(I-C_1)},
\]
\cite[Theorem II.1.3]{K}, and hence, by part (ii), $h_d= \overline{{\rm Im}(I-C_1)}$. This contradicts the fact that $1\in \sigma_r(C_1;h_d)$; see \eqref{eq.In.12}. So, $C_1$ is not mean ergodic in $h_d$.
\end{proof}

The method of proof of Proposition \ref{P.Nuova_12} is applicable in a more general setting. Let $X\subseteq \omega$ be any BK-space such that $\cE\subseteq X$ and $\cE$ is  a \textit{basis}; in the terminology of \cite[Definition 4.2.13]{Wi}, this means the space $X$ has the property AK. Suppose, in addition, that $X\subseteq c_0$ continuously which, in the event that $C_1\in\cL(X)$, implies that $\sigma_{pt}(C_1;X)=\emptyset$; see the proof of  Proposition \ref{P.Nuova_12}(ii). Property AK of $X$ yields that the dual Banach space $X'$ is isomorphic to the $\beta$-dual of $X$ and so every $z\in X'$ is of the form
\[
\langle x,z\rangle=\sum_{n=0}^\infty x_nz_n,\quad x=(x_n)_{n\in\N_0}\in X,
\]
for the unique sequence $z_n:=z(e_n)$, for $n\in\N_0$, \cite[Theorem 7.2.9]{Wi}. 

Suppose that $\cE\subseteq X'$ and that $\cE$ is a basis in $X'$. Since $\ell^1\subseteq X'$, the argument of Proposition \ref{P.Nuova_12}(iii) shows that $C_1'\in \cL(X')$ is given by \eqref{eq.dual 1}, for every $z\in X'$, and hence, $\left\{z\in \C\, :\, \left|z-\frac{1}{2}\right|<\frac{1}{2}\right\}\cup\{1\}\subseteq \sigma_{pt}(C_1';X')$. Since $\sigma_{pt}(C_1; X)=\emptyset$ it follows that  $\left\{z\in \C\, :\, \left|z-\frac{1}{2}\right|<\frac{1}{2}\right\}\cup\{1\}\subseteq \sigma_{r}(C_1;X)$. The proof of Proposition \ref{P.Nuova_12}(iv) carries over adverbatum. So, we have established the following result.


\begin{corollary}\label{C.Nuovo_12} Let $X\subseteq \omega$ be a BK-space such that $\cE\subseteq X$ and $\cE\subseteq X'$ and that $\cE$ is a basis for both $X$ and $X'$. Suppose that $X\subseteq c_0$ continuously and  $C_1\in \cL(X)$. Then 
	\[
	\sigma_{pt}(C_1;X)=\emptyset\  \mbox{and}\  \left\{z\in \C\, :\, \left|z-\frac{1}{2}\right|<\frac{1}{2}\right\}\cup\{1\}\subseteq \sigma_r(C_1;X).
	\]
	 Moreover, $C_1$ is not compact, not mean ergodic  and not supercyclic in $X$.
\end{corollary}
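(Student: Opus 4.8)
The plan is to transcribe, at this level of generality, the argument already carried out for Proposition~\ref{P.Nuova_12}(ii)--(iv); the paragraph preceding the statement performs most of the adaptation, so what remains is to assemble the pieces in order.

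First I would show $\sigma_{pt}(C_1;X)=\emptyset$. The proof of Lemma~\ref{L-2Sez2}(i) invokes only Proposition~\ref{P.SimpleFact1}(i) and Remark~\ref{remark2A}, hence applies equally with $t=1$: since $C_1\in\cL(X)$ and $X\subseteq\omega$ continuously, it gives $\sigma_{pt}(C_1;X)=\{\tfrac{1}{m+1}:m\in\N_0,\ x_1^{[m]}\in X\}$, where by \cite{ABRN} the vectors $x_1^{[m]}$ of \eqref{eq.Autovettore.1} span the eigenspaces of $C_1^\omega\in\cL(\omega)$. None of the $x_1^{[m]}$ lies in $c_0$ (already $x_1^{[0]}=\mathbbm{1}\notin c_0$, and for $m\ge1$ the coordinate $(x_1^{[m]})_{m+n}=\binom{m+n}{m}$ is unbounded in $n$), so by the hypothesis $X\subseteq c_0$ none of them lies in $X$, and the set is empty.

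Next I would locate $\sigma_r(C_1;X)$. Since $\cE$ is a basis of $X$ (property AK), each $z\in X'$ is represented by $(z(e_n))_{n\in\N_0}$ with $\langle x,z\rangle=\sum_n x_nz_n$ for $x\in X$; and since $X\subseteq c_0$ continuously with $c_{00}$ dense in $c_0$, restriction of functionals embeds $\ell^1=c_0'$ into $X'$. Using that $C_1e_n\in X$ equals $(0,\dots,0,\tfrac{1}{n+1},\tfrac{1}{n+2},\dots)$ (the entry $\tfrac{1}{n+1}$ in position $n$) and that $\cE$ is total in $X$, the computation in the proof of Proposition~\ref{P.Nuova_12}(iii) shows that $C_1'\in\cL(X')$ acts by \eqref{eq.dual 1}, exactly as on $\ell^1$ — here the AK hypotheses on $X$ and $X'$ and the inclusion $\ell^1\hookrightarrow X'$ are used. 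By \cite[Theorem~2.4]{Ak}, $\sigma_{pt}(C_1';\ell^1)=\{z:|z-\tfrac12|<\tfrac12\}\cup\{1\}$, and since $\ell^1$ is a $C_1'$-invariant subspace of $X'$ on which $C_1'$ acts by the same formula, Proposition~\ref{P.SimpleFact1}(i) (applied to $C_1'$ on $X'$) gives $\{z:|z-\tfrac12|<\tfrac12\}\cup\{1\}=\sigma_{pt}(C_1';\ell^1)\subseteq\sigma_{pt}(C_1';X')$. Feeding this into $\sigma_{pt}(C_1';X')\subseteq\sigma_{pt}(C_1;X)\cup\sigma_r(C_1;X)$ (\cite[Proposition~I.1.14]{Dow}) together with the first step yields $\{z:|z-\tfrac12|<\tfrac12\}\cup\{1\}\subseteq\sigma_r(C_1;X)$; as $\sigma_r(C_1;X)\subseteq\sigma(C_1;X)$ and $\sigma(C_1;X)$ is closed, the closed disc $\{z:|z-\tfrac12|\le\tfrac12\}$ lies in $\sigma(C_1;X)$.

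The three negative properties then follow at once. Non-compactness: $\sigma(C_1;X)$ contains a disc, hence is uncountable, which is impossible for a compact operator (\cite[Theorem~II.2.21(i)]{Dow}). Non-supercyclicity: $c_{00}\subseteq X\subseteq\omega$ with continuous inclusion, so Lemma~\ref{L-nuovo} applies. Non-mean-ergodicity: if $C_1$ were mean ergodic, then $X=\Ker(I-C_1)\oplus\overline{{\rm Im}(I-C_1)}$ (\cite[Theorem~II.1.3]{K}); since $\sigma_{pt}(C_1;X)=\emptyset$ forces $\Ker(I-C_1)=\{0\}$, this would give $\overline{{\rm Im}(I-C_1)}=X$, contradicting $1\in\sigma_r(C_1;X)$. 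I expect the only step that is not a routine transcription to be the identification of $C_1'$ on $X'$ with the explicit ``upper triangular'' formula \eqref{eq.dual 1}; once that is in place, the first step and the three negative conclusions are immediate, and the remainder is the $h_d$ argument verbatim.
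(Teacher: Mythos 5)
Your proposal is correct and follows essentially the same route as the paper: the paper's own justification (the paragraph preceding the corollary) likewise reduces $\sigma_{pt}(C_1;X)=\emptyset$ to the argument of Proposition \ref{P.Nuova_12}(ii) via $X\subseteq c_0$, identifies $C_1'$ on $X'$ with the formula \eqref{eq.dual 1} using the AK property of $X$ and $X'$ together with $\ell^1\subseteq X'$, transfers $\sigma_{pt}(C_1';\ell^1)$ into $\sigma_{pt}(C_1';X')$ and then into $\sigma_r(C_1;X)$ by \cite[Proposition I.1.14]{Dow}, and carries over the proof of Proposition \ref{P.Nuova_12}(iv) verbatim for the three negative conclusions. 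The only point you make more explicit than the paper is why restriction embeds $\ell^1=c_0'$ injectively into $X'$ (density of $c_{00}\subseteq X$ in $c_0$), which is a welcome clarification rather than a deviation.
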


We now exhibit a sequence $1\leq d_n\uparrow \infty$ such that $C_t$ exists in $h_d$ for some $t\in [0,1)$ but, not for all $t$, and other sequences $1\leq d_n\uparrow \infty$ such that $C_t$ fails to exist in $h_d$ for every $t\in (0,1)$. Some preliminaries are needed.

It is immediate from \eqref{Ces-op} that
\begin{equation}\label{eq.12.13}
	C_t^\omega e_0=\left(\frac{t^n}{n+1}\right)_{n\in\N_0},\quad t\in (0,1).
	\end{equation}

\begin{lemma}\label{L.12.7} Let  $1\leq d_n\uparrow \infty$. For every $t\in (0,1)$ it is the case that
	\begin{equation}\label{eq.12.14}
		\sum_{k=0}^\infty d_k\left|\frac{t^k}{k+1}-\frac{t^{k+1}}{k+2}\right|\geq 	\sum_{k=0}^\infty \frac{d_kt^k}{(k+1)(k+2)}.
		\end{equation}
	\begin{itemize}
		\item[\rm (i)] Let $t\in (0,1)$ satisfy $\left( \frac{d_nt^n}{(n+1)(n+2)}\right)_{n\in\N_0}\not\in \ell^1$. Then $C_t$ does not exist in $h_d$.
		\item[\rm (ii)] Let $t\in (0,1)$ satisfy $\left( \frac{d_nt^n}{(n+1)(n+2)}\right)_{n\in\N_0}\not\in c_0$. Then $C_t$ does not exist in $h_d$.
	\end{itemize}
\end{lemma}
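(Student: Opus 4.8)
The plan is to reduce everything to a single pointwise lower bound for the increments of the sequence $C_t^\omega e_0=\bigl(\tfrac{t^n}{n+1}\bigr)_{n\in\N_0}$ recorded in \eqref{eq.12.13}. First I would compute, for each $k\in\N_0$,
\[
\frac{t^k}{k+1}-\frac{t^{k+1}}{k+2}=\frac{t^k\bigl((k+2)-t(k+1)\bigr)}{(k+1)(k+2)}=\frac{t^k\bigl((k+1)(1-t)+1\bigr)}{(k+1)(k+2)}.
\]
Since $t\in(0,1)$, the factor $(k+1)(1-t)+1$ is $\ge 1>0$, so this increment is strictly positive and
\[
\Bigl|\frac{t^k}{k+1}-\frac{t^{k+1}}{k+2}\Bigr|=\frac{t^k\bigl((k+1)(1-t)+1\bigr)}{(k+1)(k+2)}\ \ge\ \frac{t^k}{(k+1)(k+2)}.
\]
Multiplying by $d_k\ge 0$ and summing over $k$ yields \eqref{eq.12.14}; this is the only computation involved.

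For (i): since $\cE\subseteq h_d$ we have $e_0\in h_d$. If $C_t$ existed in $h_d$, then $C_t\in\cL(h_d)$ by Lemma \ref{L-1-Sez2}, and hence $C_te_0=C_t^\omega e_0\in h_d$. But, by the definition of $\|\cdot\|_{h_d}$, \eqref{eq.12.13} and \eqref{eq.12.14},
\[
\|C_t^\omega e_0\|_{h_d}=\sum_{k=0}^\infty d_k\Bigl|\frac{t^k}{k+1}-\frac{t^{k+1}}{k+2}\Bigr|\ \ge\ \sum_{k=0}^\infty\frac{d_kt^k}{(k+1)(k+2)}=\infty,
\]
contradicting $C_t^\omega e_0\in h_d$. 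Hence $C_t$ does not exist in $h_d$.

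For (ii): the same set-up shows that if $C_t$ existed in $h_d$, then finiteness of $\|C_t^\omega e_0\|_{h_d}$ would force the general term $d_k\bigl|\tfrac{t^k}{k+1}-\tfrac{t^{k+1}}{k+2}\bigr|$ of the defining series to tend to $0$; but the pointwise bound above shows this term is $\ge \tfrac{d_kt^k}{(k+1)(k+2)}$, which does not converge to $0$ by hypothesis, a contradiction. (Alternatively, (ii) is a formal consequence of (i), as a sequence not lying in $c_0$ a fortiori does not lie in $\ell^1$.) There is no genuine obstacle here: the one point worth noticing is that $(k+2)-t(k+1)>0$ when $t<1$, which is what permits discarding the absolute value and bounding it below by $1$ cleanly; the remainder is bookkeeping with the definition of $h_d$ and Lemma \ref{L-1-Sez2}.
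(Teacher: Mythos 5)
Your proposal is correct and follows essentially the same route as the paper: the same pointwise lower bound $\frac{1}{k+1}-\frac{t}{k+2}\geq\frac{1}{(k+1)(k+2)}$ (just written via the factor $(k+1)(1-t)+1\geq 1$), the same test vector $e_0$ for part (i), and the same reduction of (ii) to (i) via $\ell^1\subseteq c_0$, which is the alternative you mention. No gaps.
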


\begin{proof} Fix $t\in (0,1)$. Then 
	\begin{align*}
		&	\sum_{k=0}^\infty d_k\left|\frac{t^k}{k+1}-\frac{t^{k+1}}{k+2}\right|=\sum_{k=0}^\infty d_k t^k\left(\frac{1}{k+1}-\frac{t}{k+2}\right)\\
	&	\geq \sum_{k=0}^\infty d_k t^k\left(\frac{1}{k+1}-\frac{1}{k+2}\right)=	\sum_{k=0}^\infty \frac{d_kt^k}{(k+1)(k+2)},
	\end{align*}
	which is precisely \eqref{eq.12.14}.
	
	(i) For the given $t$ set $x:=C_t^\omega e_0$. Then \eqref{eq.12.14} implies that
\[
	\sum_{k=0}^\infty d_k |x_{k+1}-x_k|\geq \left\|\left( \frac{d_nt^n}{(n+1)(n+2)}\right)_{n\in\N_0}\right\|_1=\infty,
	\]
	and hence, via \eqref{eq.12.13}, we can conclude that $C_t^\omega e_0\not\in h_d$. So, $C_t^\omega (h_d)\not \subseteq h_d$, that is, $C_t$ does not exist in $h_d$.
	
	(ii) Since $\ell^1\subseteq c_0$, this follows from part (i).
\end{proof}

	 \begin{example}\label{E.12.8}\rm
	(i) For $\alpha>1$ define $d:=(\alpha^n)_{n\in\N_0}$, in which case $1\leq d_n\uparrow \infty$. Fix $m\in\N_0$. Then the sequence $\frac{d_{m+k+1}}{d_k}=\alpha^m$, for $k\in\N_0$, is decreasing (not strictly) but, $\frac{d_{k+1}}{d_k}=\alpha$ for $k\in\N_0$ does \textit{not} decrease to $1$. So, Theorem \ref{T.12.5} is not applicable.
	
	Since $\sup_{k\in\N_0}\frac{d_{m+k+1}}{d_k}=\alpha^m$, it follows from \eqref{eq.12.5} that $\|S^{m+1}\|_{h_d\to h_d}\leq 2\alpha^m$, for $m\geq 1$, which implies that
	\[
	\sum_{m=0}^\infty t^m\|S^{m+1}\|_{h_d\to h_d}\leq 1+2\sum_{m=1}^\infty t^m\alpha^m<\infty
	\]
	\textit{provided} that $t\in [0,1/\alpha)$. For such $t$ the series $R_t:=\sum_{m=0}^\infty t^mS^m$ converges absolutely in $\cL_b(h_d)$.
	
	Given $n\in\N_0$, it follows from \eqref{eq.12.4} that
	\[
	\sum_{m=0}^\infty t^mS^me_n=(0,\ldots, 0, 1,t, t^2,\ldots)\in\omega
	\]
	for each $t\in [0,1)$, where $1$ is in position $n$, and so
	\begin{align*}
		\|\sum_{m=0}^\infty t^mS^me_n\|_{h_d}&=d_n|1-0|+d_{n+1}|t-1|+d_{n+2}|t^2-t|+\ldots\\
			&=\alpha^n+(1-t)\sum_{k=1}^\infty \alpha^{n+k}t^{k-1}\\
			&=\alpha^n+(1-t)\alpha^{n+1}\sum_{k=0}^\infty (\alpha t)^{k}<\infty
		\end{align*}
if and only if $t\in [0,1/\alpha)$. Accordingly, a necessary condition for the series $R_t=\sum_{m=0}^\infty t^mS^m$ to converge in $\cL_s(h_d)$ is that  $t\in [0,1/\alpha)$. As observed above, for such $t$ the series for $R_t$ actually converges absolutely for the operator norm in $\cL(h_d)$. Arguing as in first paragraph of the proof of Theorem \ref{T.12.5} it follows that $C_t\in \cL(h_d)$, for every $t\in [0,1/\alpha)$, is a compact operator.

Note that $\frac{d_{m+n+1}}{d_{m+n}}=\alpha$ for all $m,n\in\N_0$. For $m\in\N_0$ fixed, the sequence $(\beta_n)_{n=1}^\infty$  as defined  in the proof of Theorem \ref{T.12.5} (now with $d_k=\alpha^k$, for $k\in\N_0$) satisfies
\[
\lim_{n\to\infty}\frac{\beta_{n+1}}{\beta_n}=\lim_{n\to\infty}\frac{\alpha t(m+n+1)}{(n+2)}\left|\frac{mt+(n+2)(t-1)}{mt+(n+1)(t-1)}\right|=\alpha t.
\]
The argument (via the ratio test for $(\beta_n)_{n=1}^\infty$) used in the proof of Theorem \ref{T.12.5} can be repeated to show that $\{x_t^{[m]}\, :\, m\in\N_0\}\subseteq h_d$ whenever  $t\in [0,1/\alpha)$ and hence, via Lemma \ref{L-2Sez2}, that
\[
\sigma_{pt}(C_t; h_d)=\Lambda\ \mbox{ and }\ \sigma(C_t;h_d)=\Lambda_0,\quad t\in [0,1/\alpha).
\]
Finally, for every $t\in [0,1/\alpha)$, Theorem \ref{T-general} and Lemma \ref{L-nuovo} yield that $C_t\in \cL(h_d)$ is power bounded and uniformly mean ergodic but, not supercyclic.

For the remaining $t\in [1/\alpha, 1)$ the claim is that $C_t$ does \textit{not} exist in $h_d$. First consider $t\in (1/\alpha, 1)$. Then $\alpha t>1$ and so 
\[
\left(\frac{d_nt^n}{(n+1)(n+2)}\right)_{n\in\N_0}=\left(\frac{(\alpha t)^n}{(n+1)(n+2)}\right)_{n\in\N_0}\not\in c_0.
\]
Lemma \ref{L.12.7}(ii) shows that $C_t$ does not exist in $h_d$. 

For the case $t=1/\alpha$ (i.e., $\alpha t=1$) observe that $x:=C_t^\omega e_0$ satisfies
\begin{align*}
	&\sum_{k=0}^\infty d_k |x_{k+1}-x_k|=\sum_{k=0}^\infty \alpha^k \left|\frac{t^{k+1}}{k+2}-\frac{t^k}{k+1}\right|=\sum_{k=0}^\infty (\alpha t)^k\left|\frac{t}{k+2}-\frac{1}{k+1}\right|\\
	&= \sum_{k=0}^\infty \left(\frac{1}{k+1}-\frac{t}{k+2}\right)=\sum_{k=0}^\infty\frac{(1-t)k+(2-t)}{(k+1)(k+2)}\geq \sum_{k=0}^\infty\frac{(1-t)k+(1-t)}{(k+1)(k+2)}\\
	&= (1-t)\sum_{k=0}^\infty\frac{1}{k+2}=\infty.
	\end{align*}
Hence, \eqref{eq.12.13} implies that $C_t^\omega e_0\not\in h_d$, that is, $C_{1/\alpha}$ does not exist in $h_d$.

Concerning $t=1$, observe that \eqref{eq.Nuova} is the divergent series $\sum_{k=0}^\infty \frac{d_k}{(k+1)(k+2)}=\sum_{k=0}^\infty \frac{\alpha^k}{(k+1)(k+2)}$. Hence, $C_1^\omega e_0\not\in h_d$ which implies that $C_1$ does not exist in $h_d$.

(ii)
 Let $d:=((n+1)!)_{n\in\N_0}$, in which case $1\leq d_n\uparrow \infty$. Fix $t\in (0,1)$. Then the sequence
 \[
\gamma_n:=\frac{d_{n}t^n}{(n+1)(n+2)}=\frac{(n+1)!t^n}{(n+1)(n+2)}=\frac{n!t^n}{n+2},\quad n\in\N_0,
\]
satisfies $\gamma_n\geq \frac{t}{2}(n-1)! t^{n-1}$, for all $n\geq 1$, from which it is clear that $(\gamma_n)_{n\in\N_0}\not\in c_0$. So, Lemma \ref{L.12.7}(ii) implies that $C_t$ fails to exist in $h_d$ for every $t\in (0,1)$. For $t=0$ we recall that $C_0=D_\varphi\in \cL(h_d)$ exists and is compact; see Lemma \ref{L.12.2}. Arguing as in part (i) it follows that $C_1$ does not exist in $h_d$.

(iii) Define $d'=((n+1)^{n+1})_{n\in\N_0}$, in which case $1\leq d_n'\uparrow \infty$. Since $d_n=(n+1)!\leq (n+1)^{n+1}$, for all $n\in\N_0$, with $(d_n)_{n\in\N_0}$ as in part (ii), it follows from part (ii) that $\gamma_n\leq \frac{d_n't^n}{(n+1)(n+2)}$ for $n\in\N_0$ and hence, $\left(\frac{d_n't^n}{(n+1)(n+2)}\right)_{n\in\N_0}\not \in c_0$. So, $C_t$ fails to exist in $h_{d'}$ for every $t\in (0,1)$. Arguing  as in part (i) it follows that $C_1$ does not exist in $h_{d'}$ either.
	\end{example}

\textbf{Acknowledgements.} The authors thank the referee for the detailed report which included many useful comments and suggestions which improved the presentation of the paper. 

 The research of J. Bonet was partially supported by the project 
PID2020-119457GB-100 funded by MCIN/AEI/10.13039/501100011033 and by 
``ERFD A way of making Europe''.

\medskip

\textbf{Data Availability Statement.} The manuscript has no associated data.

\medskip

\textbf{Compliance with Ethical Standards}

\medskip
\textbf{Conflict of interest.} The authors declare that they have no conflicts of interest regarding this study.

\bibliographystyle{plain}

\end{document}